\documentclass{article}
\usepackage{graphicx} 

\title{Log-concavity and Approximate Counting for Totally Unimodular Polytopes}
\author{Jonathan Leake and Maryam Mohammadi Yekta}
\date{September 2026}

\usepackage[utf8]{inputenc}
\usepackage{comment}
\usepackage{amsmath}
\usepackage{amsfonts}
\usepackage{amssymb}
\usepackage{tikz-cd}
\usepackage{color,soul}
\usepackage{array}
\usepackage{zref-savepos}
\usepackage{algorithm}
\usepackage{algpseudocode}
\usepackage{amsthm}
\usepackage{multirow, bigstrut}
\usepackage[font={small,it}]{caption}
\usepackage{tikz}
\usepackage{bm}
\usetikzlibrary{arrows}
\usepackage{mathtools}

\usepackage{hyperref}
\hypersetup{
    colorlinks=true,
    linkcolor=blue,
    filecolor=magenta,  
    urlcolor=cyan,
}
\usepackage{cleveref}
\crefname{fact}{Fact}{Facts}
\Crefname{fact}{Fact}{Facts}
\usetikzlibrary{calc}
\usepackage{comment}
\usepackage{enumerate}
\usepackage{tikz}
\usepackage{amsmath,accents}
\usetikzlibrary{shapes.geometric}

\newcommand\restr[2]{{
  \left.\kern-\nulldelimiterspace 
  #1 
  \vphantom{\big|} 
  \right|_{#2} 
  }}

\DeclarePairedDelimiter{\norm}{\lVert}{\rVert}
\newtheorem{theorem}{Theorem}[section]
\newtheorem*{theorem*}{Theorem}
\newtheorem{lemma}[theorem]{Lemma}
\newtheorem{question}[theorem]{Question}

\newtheorem{proposition}[theorem]{Proposition}
\newtheorem{corollary}[theorem]{Corollary}

\theoremstyle{definition}
\newtheorem{definition}[theorem]{Definition}
\newtheorem{example}[theorem]{Example}
\newtheorem{remark}[theorem]{Remark}
\newtheorem{fact}[theorem]{Fact}
\theoremstyle{plain}

\newcommand{\R}{\mathbb{R}}

\newcommand{\Z}{\mathbb{Z}}

\DeclareMathOperator{\LC}{VLC}

\DeclareMathOperator{\cpc}{cap}
\DeclareMathOperator{\supp}{supp}
\DeclareMathOperator{\newt}{newt}
\DeclareMathOperator{\rank}{rank}
\DeclareMathOperator{\sgn}{sgn}
\DeclareMathOperator{\conv}{conv}
\DeclareMathOperator{\Deg}{Deg}
\DeclareMathOperator{\innt}{int}

\crefname{question}{question}{questions}
\Crefname{question}{Question}{Questions}

\begin{document}
\maketitle

\begin{abstract}
    We present a new lower bound on the number of lattice points of all totally unimodular polytopes, generalizing previous lower bounds on contingency tables, integer flows, and beyond. Our bound is based on the Gurvits capacity convex optimization problem, and thus our result implies an efficient deterministic algorithm for approximate counting of the lattice points up to an explicit exponential factor. We achieve our bounds by showing that the associated generating polynomials fit into a new general class of log-concave polynomials called $\LC$ (``variable-wise log-concavity''). This also implies a conjecture of Ferroni and Higashitani on the evaluations of the Ehrhart polynomials of unimodular polytopes. The essential ingredient for these results is the resolution of Barvinok's log-concavity conjecture for contingency tables on lines, which was proven using ChatGPT 6 Astra. We conjecture a generalization of Barvinok's conjecture, which we believe will lead to stronger and more general bounds.
\end{abstract}

\section{Introduction}

Totally unimodular polytopes are integral polytopes defined by a totally unimodular constraint matrix; concretely, a convex polytope $Q$ is totally unimodular (TU) if there exists a totally unimodular matrix $M$ (i.e., all minors are in $\{-1,0,+1\}$) and an integral vector $\bm{b}$ such that $Q = \{\bm{y} \geq 0 : M\bm{y} = \bm{b}\}$. Such polytopes have a wide range of applications, with their lattice points often representing integer solutions to various combinatorial problems. These applications include job assignments, contingency tables, bipartite matchings, bipartite independent sets, network flows, routing configurations, and beyond.

In this paper, we address two natural questions for such polytopes:
\begin{enumerate}
    \item How can we estimate the number of lattice points of such a polytope $Q$?
    \item How does the number of lattice points of $Q$ vary with the choice of $\bm{b}$?
\end{enumerate}
Answers to both of these questions arise by proving log-concavity properties of generating functions associated to totally unimodular polytopes.

Given a totally unimodular polytope $Q = \{\bm{y} \geq 0 : M\bm{y} = \bm{b}\}$, there is a natural generating function for its lattice points based on the choice of $\bm{b}$. Concretely, if $M$ is $m \times n$ and has columns $\bm{a}_1,\ldots,\bm{a}_n$ then
\[
    P_M(\bm{x}) = \prod_{j=1}^n \frac{1}{1-\bm{x}^{\bm{a}_j}}
\]
is a generating function with the property that the $\bm{x}^{\bm{b}}$ coefficient of $P_M(\bm{x})$ is precisely the number of lattice points of $Q$. Both of the above questions then become questions about $P_M$: how can we estimate the coefficients of $P_M$, and how can we understand how the coefficients of $P_M$ relate to one another?

To address these questions, we prove that truncations of $P_M$ satisfy a certain natural log-concavity property called $\LC$. This property says that, after applying coefficient extractions and positive evaluations to all but one variable of $P_M$, the coefficients of the remaining polynomial are log-concave. This generalizes various notions of log-concave polynomials (strongly log-concave \cite{Gur09}, completely log-concave \cite{ALOGV24}, Lorentzian \cite{BH20}, etc.) which have had widespread applications in mathematics and computer science.

The fact that truncations of $P_M$ are $\LC$ is then based on the underlying main technical result of the paper: that the coefficients of $P_M$ are log-concave along any rational line. This says in fact that as $\bm{b}$ moves along any line, the number of lattice points of $Q$ forms a log-concave sequence. Barvinok originally asked a more general question of this form for the case of contingency tables \cite{Barvinok2007BM}, and we resolve this line version here for all TU polytopes. This gives a nice answer to question 2, while the full strength of Barvinok's question for TU polytopes is still open \Cref{open-question-LC}.

The $\LC$ property then enables us also to answer question 1. The Gurvits capacity method was originally developed to give a simple proof of the van der Waerden conjecture for permanents in \cite{Gur08}. This technique implies a deterministic simply exponential approximation algorithm for non-negative matrix permanents as well. Since this original work, the capacity method has seen numerous applications to bounding and deterministically approximating various other combinatorial quantities. The general idea is to use log-concavity properties of a given generating function to approximate its coefficients by an associated convex optimization problem. The $\LC$ property is then tailor-made precisely to allow the Gurvits capacity method to work as usual, and thus we can apply this method to $P_M$ and related generating functions.

\subsection{Our Results}

We now formally answer question 1: how can we estimate the number of lattice points of a TU polytope? We give two results in this direction. The first gives general bounds for all TU polytopes, and the second gives simply exponential bounds in a slightly restricted regime.

\begin{theorem} [\Cref{lower bound for tu matrices a step towards simply exponential bounds}]\label{main-thm}
    Let $M$ be an $m \times n$ totally unimodular matrix with columns $\bm{a}_1,\ldots,\bm{a}_n$. For all $\bm{b} \in \Z^m$ such that $\#\{\bm{y} \in \Z_{\geq 0}^n \mid M\bm{y} = \bm{b}\} < \infty$, we have
    \[
         \mathcal{E} \geq \#\{\bm{y} \in \Z_{\geq 0}^n \mid M\bm{y} = \bm{b}\} \geq e^{-m} (1+n\|\bm{b}\|_1)^{-m} \mathcal{E},
    \]
    where
    \[
        \mathcal{E} = \inf_{\bm{x} > 0} \bm{x}^{-\bm{b}} \prod_{j=1}^n \frac{1 - (\bm{x}^{\bm{a}_j})^{\|\bm{b}\|_1 + 1}}{1 - \bm{x}^{\bm{a}_j}} = \inf_{\bm{x} > 0} \bm{x}^{-\bm{b}} \prod_{j=1}^n \sum_{k=0}^{\|\bm{b}\|_1} \bm{x}^{k \bm{a}_j}
    \]
    is a convex program up to $\log$-$\log$ transformation.
\end{theorem}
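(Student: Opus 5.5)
The plan is the standard Gurvits capacity argument, applied to the truncated generating polynomial
\[
Q(\bm{x}) = \prod_{j=1}^n \sum_{k=0}^{\|\bm{b}\|_1} \bm{x}^{k\bm{a}_j}.
\]
The upper bound is immediate. Every $\bm{y}\in\Z_{\geq 0}^n$ with $M\bm{y}=\bm{b}$ has $\|\bm{y}\|_\infty \le \|\bm{b}\|_1$. This holds because the finiteness assumption forces the polytope to be bounded, and a TU polytope has integral vertices; I will check that the entries are bounded by $\|\bm{b}\|_1$ using Cramer's rule on vertex supports, where each entry of a vertex is a signed sum of $b_i$'s with $\pm1$ coefficients, and then extend to all lattice points via their convex representation. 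Granting this, the $\bm{x}^{\bm{b}}$ coefficient of $Q$ equals the lattice point count $N$. Since all coefficients of $Q$ are nonnegative, for every $\bm{x}>0$ we have $N\bm{x}^{\bm{b}}\le Q(\bm{x})$, which gives $N\le\mathcal{E}$. Convexity after the $\log$-$\log$ transformation is also immediate: substituting $\bm{x}=e^{\bm{t}}$ turns $\log Q$ into a log-sum-exp of affine functions of $\bm{t}$.

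For the lower bound I will use the paper's result that truncations of $P_M$ are $\LC$; here the truncation at degree $\|\bm{b}\|_1$ in each column variable is $Q$. I then run the inductive coefficient-extraction argument one variable at a time over $x_1,\dots,x_m$. Concretely, let $q(x_1)$ be the univariate polynomial obtained by fixing $x_2,\dots,x_m>0$. Its coefficient sequence is log-concave with no internal zeros, which is guaranteed by $\LC$. Its degree is at most $D = n\|\bm{b}\|_1$ in absolute exponent span, since each factor contributes exponents in $[-\|\bm{b}\|_1,\|\bm{b}\|_1]$ in $x_1$; because of possibly negative exponents I will shift by a monomial. The univariate capacity lemma I will prove says that for a log-concave sequence $c_0,\dots,c_D$ and a target index $k$,
\[
c_k \ge \frac{1}{e(1+D)}\inf_{x>0}\frac{q(x)}{x^k}.
\]
Equivalently, after a log-concave reweighting the bound reads $\max_i c_i \ge \sum_i c_i/(D+1)$, and the factor $e$ comes from the usual estimate $(1-1/(D+1))^{D}\ge e^{-1}$ (compare Gurvits' $(\frac{d-1}{d})^{d-1}$ bound). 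This step is standard: choose $x$ at the minimizer, so that $k$ is the "mean" index in the tilted distribution, and use log-concavity to show the mass at the mean is at least $1/(e(D+1))$ of the total.

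Next I iterate. After extracting the $x_1^{b_1}$ coefficient, the resulting polynomial in $x_2,\dots,x_m$ is still $\LC$ in the sense needed, since $\LC$ is defined to be closed under coefficient extraction and positive evaluation of the other variables. I also need the inequality $\inf_{x_2,\dots}\bigl[\text{coeff}_{x_1^{b_1}}\bigr]\cdot x^{-b'}\ge \frac{1}{e(1+D)}\inf_{\bm{x}}Q\,\bm{x}^{-\bm{b}}$, which holds pointwise in $x_2,\dots,x_m$ by the univariate lemma. Applying this $m$ times yields the factor $e^{-m}(1+n\|\bm{b}\|_1)^{-m}$.

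The main obstacle is the univariate lemma with the correct constant in the presence of the possible exceptional case where the target exponent lies at the boundary or where the infimum is not attained; I plan to handle this by a limiting argument. A second point to check is that the degree span in each variable after partial extraction is still at most $n\|\bm{b}\|_1$; this holds because extraction only removes terms.
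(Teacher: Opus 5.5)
Your proposal is correct and follows essentially the paper's argument: the $\LC$ property of $P_{M,\|\bm b\|_1\bm 1}$ (\Cref{k-solution Laurent polynomial of tu matrices are lc}), the variable-by-variable capacity induction of \Cref{capacity bound for LC polynomials}, the entry bound $y_i\le\|\bm b\|_1$ from TU vertices (\Cref{bound for finite fibers}), and the estimate $\frac{D^D}{(D+1)^{D+1}}\ge\frac{1}{e(D+1)}$ with $D=n\|\bm b\|_1$. Two small precision points: first, the span bound $D\le n\|\bm b\|_1$ holds because each factor contributes exponents in one of $[0,c]$, $[-c,0]$, $\{0\}$, not all of $[-c,c]$; second, your ``max $\ge$ average'' reformulation is not equivalent to the univariate lemma, whose clean proof uses $c_i\le c_k\,(c_{k+1}/c_k)^{i-k}$ for all $i$ and needs neither an attained minimizer nor a limiting argument.
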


\begin{corollary}[\Cref{simply exponential lower bound for tu matrices with large enough b}] \label{main-cor}
    Let $M$ be an $m \times n$ totally unimodular matrix with columns $\bm{a}_1,\ldots,\bm{a}_n$ and $\max \{ m, n\} \ge 3$. For all $\bm{b} \in \Z^m$ satisfying $\norm{\bm{b}}_1 \ge 6m \cdot \log(\max\{ m,n\})$ and such that $\#\{\bm{y} \in \Z_{\geq 0}^n \mid M\bm{y} = \bm{b}\} < \infty$, we have
    \[
         \mathcal{E} \geq \#\{\bm{y} \in \Z_{\geq 0}^n \mid M\bm{y} = \bm{b}\} \geq e^{-\|\bm{b}\|_1} \mathcal{E},
    \]
    where $\mathcal{E}$ is as in \Cref{main-thm}.
\end{corollary}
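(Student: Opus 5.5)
The plan is to derive the corollary directly from \Cref{main-thm}. The upper bound $\mathcal{E} \geq \#\{\bm{y}\}$ is identical, so only the lower bound needs work. It suffices to show that under the hypothesis $\|\bm{b}\|_1 \ge 6m\log(\max\{m,n\})$ we have
\[
    e^{-m}(1+n\|\bm{b}\|_1)^{-m} \ge e^{-\|\bm{b}\|_1},
\]
or equivalently, after taking logarithms and writing $B = \|\bm{b}\|_1$ and $N = \max\{m,n\}$,
\[
    m + m\log(1+nB) \le B .
\]

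First I bound the left side in terms of $N$ and $B$. Since $B \ge 6m\log N \ge 6\log 3 > 1$, we have $1 + nB \le 2NB$. Hence
\[
    m + m\log(1+nB) \le m\bigl(1 + \log 2 + \log N + \log B\bigr).
\]
Because $N \ge 3$ gives $\log N > 1$, the constant terms satisfy $1 + \log 2 \le 2\log N$. So it is enough to prove
\[
    3m\log N + m\log B \le B .
\]

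By hypothesis $3m\log N \le B/2$, so the remaining task is $m\log B \le B/2$, i.e.\ $B/\log B \ge 2m$. The function $t \mapsto t/\log t$ is increasing for $t \ge e$, and $B \ge 6m\log N \ge e$. So it suffices to check the inequality at $t = 6m\log N$:
\[
    \frac{6m\log N}{\log(6m\log N)} \ge 2m \iff (\log N)^3 \ge \log(6m\log N) .
\]
Using $m \le N$, the right side is at most $\log 6 + \log N + \log\log N$. With $L = \log N \ge \log 3 \approx 1.0986$, this reduces to the one-variable inequality
\[
    L^3 \ge \log 6 + L + \log L \qquad \text{for } L \ge \log 3 .
\]

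This last calculus step is the main obstacle, and it is where the precise constant $6$ and the condition $N \ge 3$ come in. At $L = \log 3$ the inequality is tight and possibly false: the left side is about $1.326$, while the right side is about $1.792 + 1.099 + 0.094 = 2.985$. So the crude steps above must be tightened. I would keep the exact quantity $m + m\log(1+nB)$ rather than relaxing to $2NB$, and use the hypothesis more carefully. Writing $B = 6m\log N\cdot s$ with $s \ge 1$, the target becomes
\[
    6m\log N \cdot s \ge m\bigl(1 + \log(1 + 6mn\,s\log N)\bigr).
\]
After dividing by $m$, I would argue separately in $s$ (the derivative in $s$ of the left side dominates that of the right side, so the case $s = 1$ is the worst) and in $m, n \le N$. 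This reduces everything to checking
\[
    6\log N \ge 1 + \log(1 + 6N^2\log N)
\]
for $N \ge 3$. At $N = 3$ the left side is about $6.59$ and the right side is about $1 + \log(60.3) \approx 5.10$. The gap $6\log N - 2\log N$ grows faster than the remaining terms, so the inequality holds for all $N \ge 3$ by monotonicity.
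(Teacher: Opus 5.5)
Your final argument is correct and is essentially the paper's proof: apply \Cref{main-thm}, use monotonicity in $\|\bm b\|_1$ to reduce to the extremal case $\|\bm b\|_1 = 6m\log N$ with $N=\max\{m,n\}$, replace $m,n$ by $N$, and check a one-variable inequality for $N\ge 3$ (the paper first relaxes $1+n\|\bm b\|_1\le 2n\|\bm b\|_1$, uses that $z\mapsto z-m\log z$ is increasing for $z>m$, and ends at $q^4\ge 12e\log q$). Your abandoned first route also works: $\frac{6m\log N}{\log(6m\log N)}\ge 2m$ is equivalent to $3\log N\ge \log(6m\log N)$, not $(\log N)^3\ge\log(6m\log N)$, and with $m\le N$ this reduces to $N^2\ge 6\log N$, which holds for all $N\ge 3$, so the ``obstacle'' you identified was only an algebra slip.
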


\Cref{main-thm} and \Cref{main-cor} can also be made algorithmic, using the maximum-entropy optimization framework of
\cite{SV14}.

\begin{theorem}[see \Cref{efficiency-proof}] \label{main-algo}
    There is a deterministic algorithm (based on the ellipsoid algorithm) which, given an $m \times n$ totally unimodular matrix $M$, a vector $\bm{b} \in \Z^m$ such that $\{\bm{y} \in \Z_{\geq 0}^n \mid M\bm{y} = \bm{b}\}$ is nonempty, and an $\epsilon \in (0,1)$, computes $\mathcal{A}$ such that $\mathcal{A} - \epsilon \leq \log \mathcal{E} \leq \mathcal{A} + \epsilon$ in time polynomial in $m$, $n$, $\log(1+\|\bm{b}\|_1)$, and $\log \frac{1}{\epsilon}$. Here $\mathcal{E}$ is defined as in \Cref{main-thm}.
\end{theorem}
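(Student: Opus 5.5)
The plan is to rewrite $\log \mathcal{E}$ as a convex minimization over $\bm{z} = \log \bm{x} \in \R^m$ and then run the ellipsoid method through the maximum-entropy framework of \cite{SV14}. Set $N = \|\bm{b}\|_1$ and
\[
f(\bm{z}) = -\langle \bm{b}, \bm{z}\rangle + \sum_{j=1}^n \log \sum_{k=0}^{N} e^{k\langle \bm{a}_j, \bm{z}\rangle},
\]
so that $\log \mathcal{E} = \inf_{\bm{z}} f(\bm{z})$. Each summand is a log-sum-exp of linear forms, so $f$ is convex. Both $f$ and $\nabla f$ can be evaluated to any prescribed precision in time polynomial in $m$, $n$, $\log(1+N)$ and the bit size of $\bm{z}$. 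The key point is that $\sum_{k=0}^N e^{kt}$ has the closed form $(e^{(N+1)t}-1)/(e^{t}-1)$, so we never expand the $N+1$ terms; we only use $\log N$-bit arithmetic. Gradient entries have a similar closed form.

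Next we pass to the dual. By convex duality,
\[
\inf_{\bm z} f(\bm z) = \max\{\, \textstyle\sum_j H_N(\mu_j) : \bm{\mu} \in [0,N]^n,\ M\bm{\mu} = \bm{b} \,\},
\]
where $H_N(\mu)$ is the maximum entropy of a distribution on $\{0,\dots,N\}$ with mean $\mu$. Since $\{\bm y \ge 0 : M\bm y=\bm b\}$ contains an integer point, it is a nonempty polytope. Total unimodularity then makes every vertex of the feasible region $\{M\bm\mu=\bm b,\ 0\le \bm\mu\le N\}$ integral. This is exactly the setting of \cite{SV14}: a max-entropy program over a polytope given by explicit linear constraints, whose dual is the unconstrained problem above.

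The main obstacle is the \emph{bounding box} step. We must show that an $\epsilon$-approximate minimizer of $f$ lies in a ball of radius $R$ with $\log R$ polynomial in $m$, $n$, $\log(1+N)$ and $\log(1/\epsilon)$, even though the infimum may not be attained, for instance when the feasible region hits the boundary of $[0,N]^n$. I would follow the SV14 argument.
\begin{itemize}
\item If the feasible region contains a point whose coordinates are bounded away from $0$ and $N$ by some $\eta$, the sublevel sets of $f$ are bounded with radius polynomial in $1/\eta$ and $n$, $m$, $N$.
\item In general, I would first restrict to the face of coordinates forced to $0$ or $N$. These forced coordinates can be detected by linear programming in polynomial time, since $M$ is TU.
\item On that face, averaging integral vertices yields a relative-interior point whose distance to the boundary is at least $1/\mathrm{poly}(n)$.
\end{itemize}
The ellipsoid method then gives the $\epsilon$-approximation in time polynomial in $\log R$, $\log(1/\epsilon)$ and $\log \max_{B(R)} |f| \le \log(\mathrm{poly}(n,m,N)\cdot R)$, which is polynomial in the stated parameters. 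Alternatively, I would cite \cite{SV14} directly once the generalized-counting oracle (the closed-form evaluation above) and the interior point are supplied.
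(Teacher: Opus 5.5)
Your outline has the same architecture as the paper's proof in \Cref{efficiency-proof}, and it is correct modulo two fixable omissions noted below. The shared pieces are the log--log convex objective, closed-form evaluation of the geometric sums, removal of coordinates forced to $0$ or $N$ by per-coordinate LPs, an interiority-based bounding box, and the approximate-oracle ellipsoid method of \cite{SV14}.

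Where you genuinely differ is the bounding-box lemma. The paper works in $\bm b$-space. By \Cref{zonotope-normals}, the facet normals of $Z=M[\bm 0,\bm k]$ lie in $\{0,\pm1\}^m$ with integral right-hand sides, so an integral interior $\bm b$ has slack at least $1$. This gives $B_2(\bm b,1/\sqrt m)\subseteq Z$, then $\Phi(\bm z)\ge\|\bm z\|_2/\sqrt m$, and $R=\sqrt m\log p(\bm 1)$ in one stroke. You instead work in the fiber $\{M\bm\mu=\bm b,\ \bm 0\le\bm\mu\le N\bm 1\}$. By \Cref{unimodular polytopes have integer vertices}, each free coordinate has an integral vertex with $\mu_j\ge 1$ and one with $\mu_j\le N-1$. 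Averaging these gives a point with all coordinates in $[\eta,N-\eta]$, where $\eta=\frac{1}{2n}$. Since $\log\sum_{k=0}^N e^{kt}-\mu_j t\ge \eta|t|$, this yields $f(\bm z)\ge \eta\|M^\top\bm z\|_1$.

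What your route buys: it avoids the zonotope facet description (McMullen plus the cofactor computation). Your dual max-entropy view also makes it transparent why the face reduction preserves the value, since forced coordinates carry entropy $H_N(0)=H_N(N)=0$; the paper only calls this a standard argument.

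The price is an extra conversion from $\|M^\top\bm z\|_1$ to $\|\bm z\|_2$, which your sketch omits. Without it your first bullet is false: $f$ is constant along $\ker M^\top$, so its sublevel sets are unbounded unless $M$ has full row rank. You must delete dependent rows, and do so again after removing forced columns (which can create zero or dependent rows), as the paper does. Then use total unimodularity a second time. A nonsingular $m\times m$ column submatrix $B$ has $B^{-1}\in\{0,\pm1\}^{m\times m}$, so $\|M^\top\bm z\|_1\ge\|B^\top\bm z\|_2\ge\|\bm z\|_2/m$, giving $R\le 2mn^2\log(N+1)$.

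Finally, the fiber may be infinite, so $\{\bm y\ge\bm 0: M\bm y=\bm b\}$ need not be a polytope. What you actually need is that the box-truncated region is nonempty. This holds because a vertex of $\{\bm y\ge\bm 0: M\bm y=\bm b\}$ has entries at most $\|\bm b\|_1$, as in the proof of \Cref{bound for finite fibers}.
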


The proofs of \Cref{main-thm} and \Cref{main-cor} are then based on our answer to question 2: how does the number of lattice points of a TU polytope vary with $\bm{b}$? The following result answers this by generalizing a question of Barvinok \cite{Barvinok2007BM} on the log-concavity of the counts of contingency tables with various marginals. Our result in fact holds for \textbf{unimodular polytopes}, which are defined like totally unimodular polytopes except that the defining matrix $M$ is only required to be unimodular.

\begin{theorem} [\Cref{thm- Midpoint log-concavity for TU matrices}] \label{main-Barvinok}
    Let $M$ be an $m \times n$ (not necessarily totally) unimodular matrix. For all $\bm{b},\bm{b}^-,\bm{b}^+ \in \Z^m$ such that $\bm{b} = \frac{\bm{b}^- + \bm{b}^+}{2}$, we have
    \[
        \#\{\bm{y} \in \Z_{\geq 0}^n \mid M\bm{y} = \bm{b}\}^2 \geq \#\{\bm{y} \in \Z_{\geq 0}^n \mid M\bm{y} = \bm{b}^-\} \cdot \#\{\bm{y} \in \Z_{\geq 0}^n \mid M\bm{y} = \bm{b}^+\},
       \]
    given that all of these fibers are finite. 
\end{theorem}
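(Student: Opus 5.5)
The plan is to reduce the inequality to a comparison of fiber counts inside a box, and then to compare those counts by an injection. Write $F(\bm{\beta}) = \{\bm{y} \in \Z_{\geq 0}^n \mid M\bm{y} = \bm{\beta}\}$. Then $F(\bm{b}^-)\times F(\bm{b}^+)$ and $F(\bm{b})\times F(\bm{b})$ both map into $F(2\bm{b})$ via $(\bm{y},\bm{y}') \mapsto \bm{y}+\bm{y}'$, since $\bm{b}^-+\bm{b}^+ = 2\bm{b}$. It therefore suffices to prove the inequality fiberwise. Fix $\bm{z} \in F(2\bm{b})$ and set
\[
    g_{\bm{z}}(\bm{\beta}) = \#\{\bm{t} \in \Z^n : \bm{0} \le \bm{t} \le \bm{z},\ M\bm{t} = \bm{\beta}\}.
\]
The pairs in $F(\bm{b}^-)\times F(\bm{b}^+)$ with sum $\bm{z}$ are exactly the $(\bm{z}-\bm{t},\bm{t})$ with $\bm{t}$ counted by $g_{\bm{z}}(\bm{b}^+)$. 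Likewise the pairs in $F(\bm{b})^2$ with sum $\bm{z}$ are counted by $g_{\bm{z}}(\bm{b})$. Summing over $\bm{z}$, the theorem follows from
\[
    g_{\bm{z}}(\bm{b}) \ \ge\ g_{\bm{z}}(\bm{b}^+) \qquad \text{for every } \bm{z} \in F(2\bm{b}).
\]

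The involution $\bm{t} \mapsto \bm{z}-\bm{t}$ shows that $g_{\bm{z}}$ is symmetric about $M\bm{z}/2 = \bm{b}$, so in particular $g_{\bm{z}}(\bm{b}^+) = g_{\bm{z}}(\bm{b}^-)$. The claim is thus that a centrally symmetric box-fiber count is maximized at its center. Two structural facts are available. First, the box polytope $\{\bm{s} : \bm{0}\le \bm{s}\le \bm{z},\ M\bm{s}=\bm{\beta}\}$ is integral for integral $\bm{\beta}$ in the image, since adding slacks gives the matrix $\left(\begin{smallmatrix} M & 0 \\ I & I\end{smallmatrix}\right)$, which is again unimodular. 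Second, $\ker M$ has the conformal circuit decomposition property: every integer vector $\bm{w}$ with $M\bm{w}=\bm{c}$ splits, sign-compatibly, into integral circuit pieces of the augmented matrix $(M \mid -\bm{c})$.

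The injection would go as follows. Given $\bm{t}$ counted by $g_{\bm{z}}(\bm{b}^+)$, set $\bm{t}' = \bm{z}-\bm{t}$, which is counted by $g_{\bm{z}}(\bm{b}^-)$. The vector $\bm{d} = \bm{t}-\bm{t}'$ satisfies $|\bm{d}|\le \bm{z}$ and $M\bm{d} = \bm{b}^+-\bm{b}^- = 2(\bm{b}^+-\bm{b})$. I will decompose $\bm{d}$ conformally into integral pieces, using unimodularity, and then use a canonical rule to select a sub-sum $\bm{h}$ with $M\bm{h} = \bm{b}^+-\bm{b}$. For the rule I will first try fixing a total order on circuits and taking a lexicographically first greedy selection. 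Setting $\bm{s} = \bm{t}-\bm{h}$ then gives $\bm{0}\le\bm{s}\le\bm{z}$, by conformality, and $M\bm{s}=\bm{b}$.

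The main obstacle is making this map injective, or at least proving the count inequality, rather than merely showing that $g_{\bm{z}}(\bm{b})\ge 1$. The circuit decomposition is not unique, and different $\bm{t}$ could land on the same $\bm{s}$. If a direct injection fails, I would try two alternatives. One is an induction on $\|\bm{b}^+-\bm{b}\|_1$, moving one circuit step at a time along the line $\bm{b} + k(\bm{b}^+-\bm{b})$ and pairing adjacent levels, in the style of a Lindstr\"om--Gessel--Viennot or Christofides--Gurvits switching argument. The other is to prove log-concavity of $k\mapsto g_{\bm{z}}(\bm{b}+k(\bm{b}^+-\bm{b}))$ directly. Combined with the symmetry $k\mapsto -k$, that log-concavity gives the maximum at $k=0$. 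This step is where the real content of Barvinok's question lives, and I expect it to carry the bulk of the work.
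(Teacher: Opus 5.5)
You have not proved the theorem: the key inequality is left open, as you acknowledge. Your reduction is sound and is the same as the paper's. The coefficient of $\bm w^{\bm\alpha}$ computed in the proof of \Cref{thm- Midpoint log-concavity for TU matrices} is exactly your $g_{\bm z}$. So everything rests on the box statement $g_{\bm z}(\bm b)\ge g_{\bm z}(\bm b^+)$, i.e.\ that a box-fiber count is maximized at the center of the box (\Cref{midpoint is heavy}). Your integrality remark, that the slack matrix is again unimodular, is also correct.

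The gap is in each of your three routes to the box statement:
\begin{itemize}
\item The conformal-selection map $\bm t\mapsto\bm t-\bm h$ only shows the central fiber is nonempty. Nothing in it controls collisions, so it gives no count inequality.
\item Proving log-concavity of $k\mapsto g_{\bm z}(\bm b+k(\bm b^+-\bm b))$ is circular. Run your own pair-sum reduction on that inequality at $k=0$: the term $\bm\alpha=\bm z$ asks for center-heaviness in the very same box $[\bm 0,\bm z]$, so no parameter decreases.
\item The ``one circuit step at a time'' induction is not specified enough to carry any weight.
\end{itemize}

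The missing idea is to compare \emph{squares} rather than build an injection. For any box and any integral $\bm\beta$, the map $(\bm x,\bm y)\mapsto(|\bm x-\bm y|,(\bm x-\bm y)_+,\min(\bm x,\bm y))$ is a bijection giving
\[
B_{(\bm k_1,\bm k_2)}(\bm\beta)^2=\sum_{\substack{\bm 0\le\bm a\le\bm k_2-\bm k_1\\ E(\bm a)>0}}E(\bm a)\,B_{(\bm k_1,\bm k_2-\bm a)}\bigl(\bm\beta-\tfrac{M\bm a}{2}\bigr).
\]
Here $E(\bm a)=\#\{\bm p: p_i\in\{0,a_i\},\ M\bm p=\tfrac{M\bm a}{2}\}$ does not depend on $\bm\beta$.

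Apply this identity twice: to an arbitrary $\bm\beta$ (count $B$), and to the center $\bm c=\tfrac{M(\bm k_1+\bm k_2)}{2}$ (count $B_c$). For the center, each shifted target $\bm c-\tfrac{M\bm a}{2}$ is exactly the center of the shrunken box $(\bm k_1,\bm k_2-\bm a)$. Since the coefficients $E(\bm a)$ agree, induction on box size lets every $\bm a\neq\bm 0$ term for $\bm c$ dominate the corresponding term for $\bm\beta$. The $\bm a=\bm 0$ term is the count itself, so $B_c^2-B_c\ge B^2-B$.

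Because $x\mapsto x^2-x$ is increasing on positive integers, this yields $B_c\ge B$ once $B_c\ge 1$. That nonemptiness is your integrality fact, and it is the only place unimodularity enters. Without this identity, or some substitute with a genuinely decreasing induction parameter, your argument does not establish the theorem.
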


We note that \Cref{main-Barvinok} immediately implies a recent conjecture for TU polytopes on the log-concavity of Ehrhart evaluations. This conjecture comes from \cite{FerroniHigashitani2024}, where the authors in fact make the conjecture for all IDP lattice polytopes (though this is now false in general by \cite{Fer26}).
Recall that, given a lattice polytope $P$, the Ehrhart polynomial $E_P(t)$ is the unique polynomial such that $E_P(k) = \#\left(kP \cap \Z^n\right)$ for all $k \in \Z_{\geq 0}$.

\begin{corollary}[Conjecture 1.2 of \cite{FerroniHigashitani2024}]
    For all (not necessarily totally) unimodular polytopes $P$, we have $E_P(k)^2 \geq E_P(k-1) \cdot E_P(k+1)$ for all $k \in \Z_{\geq 1}$.
\end{corollary}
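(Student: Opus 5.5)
The plan is to realize the dilates $kP$ as fibers of a single unimodular system whose right-hand side depends linearly on $k$, and then apply \Cref{main-Barvinok} with $\bm{b}^\pm$ corresponding to $k\pm 1$.

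\emph{Step 1: put $P$ in equality form.} By the definition of a unimodular polytope, $P = \{\bm{y} \geq 0 : M\bm{y} = \bm{b}\}$, where $M$ is an $m \times n$ unimodular matrix and $\bm{b} \in \Z^m$. If $P$ is given in some other lattice-equivalent presentation, I would first reduce to this form; this is harmless because $E_P$ is invariant under unimodular lattice isomorphisms.

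\emph{Step 2: identify the dilates as fibers.} For every $k \in \Z_{\geq 0}$ we have
\[
kP = \{\bm{y} \geq 0 : M\bm{y} = k\bm{b}\},
\]
so
\[
E_P(k) = \#\{\bm{y} \in \Z^n_{\geq 0} \mid M\bm{y} = k\bm{b}\}.
\]
Here we use that $P$ is a lattice polytope, so the Ehrhart polynomial agrees with the lattice-point count at every nonnegative integer, including $k=0$. At $k=0$ the fiber is $\{\bm{y}\ge 0: M\bm{y}=0\}$. Since $P$ is bounded and nonempty, this recession cone is $\{0\}$, so the fiber is the single point $0$ and $E_P(0)=1$, as it should be. If $P$ were empty, both sides of the desired inequality would be $0$ for $k \geq 1$ and there would be nothing to prove.

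\emph{Step 3: apply \Cref{main-Barvinok}.} Fix $k \geq 1$ and take $\bm{b}^- = (k-1)\bm{b}$, $\bm{b}^+ = (k+1)\bm{b}$, and middle vector $k\bm{b}$. These are integral vectors and $k\bm{b} = \frac{\bm{b}^- + \bm{b}^+}{2}$. All three fibers are finite, because each one is the lattice-point set of a bounded polytope $jP$. \Cref{main-Barvinok} then yields $E_P(k)^2 \geq E_P(k-1)\,E_P(k+1)$.

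The only delicate point is Step 1: checking that the paper's notion of unimodular polytope really provides a single unimodular $M$ for which every dilate $kP$ is the fiber over $k\bm{b}$, together with the finiteness hypothesis needed by \Cref{main-Barvinok}. Boundedness of $P$ gives finiteness immediately, so no genuine obstacle is expected.
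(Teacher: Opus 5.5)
Your proposal is correct and is essentially the paper's argument: like the paper, you identify $E_P(t)$ with the number of nonnegative integer solutions of $M\bm{y}=t\bm{b}$ and apply the midpoint inequality with $(t\pm 1)\bm{b}$. The only cosmetic difference is that the paper invokes the box-bounded version (\Cref{thm- Midpoint log-concavity for TU matrices} with $\bm{k}_1=\bm{0}$, $\bm{k}_2=\bm{k}$, $\bm{w}=\bm{1}$) and lets $\bm{k}\to\infty$, which is exactly how \Cref{main-Barvinok} is obtained, so citing \Cref{main-Barvinok} directly as you do is fine.
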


Beyond our main results, we can also prove slightly stronger results under further restrictions. We give various bounds in \Cref{section: capacity bounds}, and we give one such result here to demonstrate the sort of thing we can achieve.

\begin{theorem} [\Cref{final lower bound for TU matrices with nonnegative partial row sums}]
    Let $M$ be an $m \times n$ totally unimodular matrix with columns $\bm{a}_1,\ldots,\bm{a}_n$ such that $\bm{a}_i \neq \bm{0}$ for all $i \in [n]$, and let $\bm{s}_i$ be the sum of the first $i$ rows of $M$ for $i \in [m]$. If $\bm{s}_i \ge \bm{0}$ for all $i \in [m]$ (for example, if $M$ has only $0/1$ entries) then for all $\bm{b} \in \Z^m$ we have
    \[
        \mathcal{E} \ge \#\{\bm{y} \in \Z_{\geq 0}^n \mid M\bm{y} = \bm{b}\} \ge \frac{|\sigma_m|^{|\sigma_m|}}{(1 + |\sigma_m|)^{1 + |\sigma_m|}} \left[\prod_{i = 1}^{m-1} \frac{|\sigma_i|^{|\sigma_i|}}{(1 + |\sigma_i|)^{1 + |\sigma_i|}} \right]^2 \mathcal{E},
    \]
    where $\sigma_i = b_1 + \cdots + b_i$ for all $i \in [m]$ and
    \[
        \mathcal{E} = \inf_{\substack{\bm{x} > 0 \\ \forall j \, \bm{x}^{\bm{a}_j} < 1}} \bm{x}^{-\bm{b}} \prod_{j=1}^n \frac{1}{1 - \bm{x}^{\bm{a}_j}} = \inf_{\bm{x} > 0} \bm{x}^{-\bm{b}} \prod_{j=1}^n \sum_{k=0}^{\infty} \bm{x}^{k \bm{a}_j}
    \]
    is a convex program up to $\log$-$\log$ transformation.
\end{theorem}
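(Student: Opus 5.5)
The upper bound $\mathcal{E} \ge \#\{\bm{y}\}$ is the usual one. For any $\bm{x}>0$ with $\bm{x}^{\bm{a}_j}<1$ for all $j$, the coefficient of $\bm{x}^{\bm{b}}$ in $P_M(\bm{x}) = \prod_j (1-\bm{x}^{\bm{a}_j})^{-1}$ is at most $\bm{x}^{-\bm{b}}P_M(\bm{x})$, because all coefficients are nonnegative. Taking the infimum gives the claim.

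For the lower bound I would run the Gurvits capacity argument one variable at a time, with row order $1,\dots,m$. First change variables to $z_1 = x_1 x_2^{-1}$, $z_2 = x_2 x_3^{-1}$, and so on; equivalently, replace $M$ by $UM$, where $U$ is the lower-triangular all-ones matrix. This operation preserves unimodularity. The new $i$-th row is $\bm{s}_i$ and the new right-hand side has entries $\sigma_i$. The hypothesis $\bm{s}_i \ge \bm{0}$ says that every monomial $\bm{z}^{U\bm{a}_j}$ has nonnegative exponents, so $P_{UM}$ is a genuine power series in $\bm{z}$, and the count we want is its $\bm{z}^{\bm{\sigma}}$ coefficient. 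The change of variables is a monomial bijection of the positive orthant, so the infimum $\mathcal{E}$ is unchanged.

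Next I would invoke the $\LC$ property of (truncations of) $P_{UM}$, which the paper derives from \Cref{main-Barvinok} via line log-concavity. The standard one-variable lemma is: if $p(t)=\sum_k c_k t^k$ has nonnegative log-concave coefficients with no internal zeros, then
\[
c_\sigma \ge \frac{\sigma^\sigma}{(1+\sigma)^{1+\sigma}} \inf_{t>0} \frac{p(t)}{t^\sigma}
\]
for power series, since the log-concave sequence is dominated by a geometric tail. I would induct, extracting $z_1^{\sigma_1}$ first, then $z_2^{\sigma_2}$, and so on. At step $i$ the remaining variables $z_{i+1},\dots,z_m$ are evaluated at positive reals, and $\LC$ guarantees that the univariate sequence in $z_i$ is log-concave. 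Each step costs one factor $\frac{|\sigma_i|^{|\sigma_i|}}{(1+|\sigma_i|)^{1+|\sigma_i|}}$, which yields the product in the statement.

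The squared factor for $i<m$ must come from the fact that the one-variable lemma is applied to the power series $P_M$ itself rather than to a polynomial. The extraction in variable $z_i$ may have been normalized at a point where later variables are not yet optimized, so each nonfinal step seems to lose the factor twice: once in passing the capacity inequality through, and once because the evaluated series in the later variables has to be compared with the capacity after coefficient extraction. Only the last variable escapes this. I expect justifying this bookkeeping, and using $\LC$ for infinite series (via truncation at large degree and a limit), to be the main obstacle. I would handle it by truncating each factor at degree $N \gg \|\bm{b}\|_1$, applying the $\LC$ result for truncations, and letting $N\to\infty$ using monotone convergence of the infima. The condition $\bm{a}_j\neq \bm{0}$ ensures that every factor depends on some variable, so these limits are finite.
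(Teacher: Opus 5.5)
Your one-variable-at-a-time plan is viable, but it is not the paper's route, and it proves a stronger bound than the statement. You extract $z_1,\dots,z_{i-1}$ before treating $z_i$. At that step every $z_i$-exponent is $\bm{s}_i\cdot\bm{y}\ge 0$ and the target is $\sigma_i$. So row $i$ costs $g(\sigma_i)$ exactly once, where $g(x)=\frac{|x|^{|x|}}{(1+|x|)^{1+|x|}}$. This gives $\#\{\bm y\in\Z_{\ge0}^n : M\bm y=\bm b\}\ge\mathcal{E}\prod_{i=1}^m g(\sigma_i)$, and since $g\le 1$ the stated bound follows.

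Drop your paragraph explaining the square by a ``double loss''; it is wrong, and no factor is paid twice in the Gurvits induction. The square comes only from the paper's argument. As written, \Cref{capacity bound for LC polynomials} extracts the last variable first. The degree bounds it needs come from the dual point $\bm w=-\bm 1$, and that requires $\bm s_m=\bm 0$ (\Cref{lower bound for matrices with nonnegative partial sums zero row sum using k}). To force this, the paper passes to the TU matrix $\begin{bmatrix}M\\-JM\end{bmatrix}$ with right-hand side $(\bm b,-J\bm b)$. Its partial sums are $\sigma_1,\dots,\sigma_m,\sigma_{m-1},\dots,\sigma_1,0$, so each $\sigma_i$ with $i<m$ appears twice.

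Your route amounts to applying \Cref{capacity bound for LC polynomials} to $P_{M,\bm k}$ with the variables in reverse order. After extracting $x_1^{b_1}\cdots x_{i-1}^{b_{i-1}}$, the $x_i$-degrees are $\bm s_i\cdot\bm y-\sigma_{i-1}\ge-\sigma_{i-1}$. This lower bound comes directly from $\bm s_i\ge\bm 0$, so no doubling is needed.

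Two of your justifications need repair.

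First, passing to $UM$ does not preserve total unimodularity: $M=\begin{bmatrix}1\\1\end{bmatrix}$ gives $UM=\begin{bmatrix}1\\2\end{bmatrix}$. The paper proves $\LC$ only for TU matrices (\Cref{why pAB is not LC for unimodular matrices}), so you cannot cite $\LC$ of $P_{UM,\bm k}$. You only need the specific sections, and those are fine. Set $z_j=r_j$ for $j>i$ and $\rho=\prod_{j>i}r_j$. Then the section of $[z_1^{\sigma_1}\cdots z_{i-1}^{\sigma_{i-1}}]P_{UM,\bm k}$ equals $(\rho z_i)^{\sigma_{i-1}}Q(\rho z_i)$. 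Here $Q$ is the one-variable section of $[x_1^{b_1}\cdots x_{i-1}^{b_{i-1}}]P_{M,\bm k}$ at $x_l=\prod_{j\ge l}r_j$ for $l>i$, which is log-concave by \Cref{k-solution Laurent polynomial of tu matrices are lc}.

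Second, ``monotone convergence of the infima'' gives the wrong direction. The inequality $P_{M,\bm k}\le P_M$ only yields $\cpc_{\bm b}(P_{M,\bm k})\le\mathcal E$. You need $\lim_{\bm k}\cpc_{\bm b}(P_{M,\bm k})\ge\mathcal E$, and the infima of an increasing family need not converge to the infimum of the limit. This is exactly \Cref{cap of PMk converges to cap PM}, proved via the product formula \Cref{capacity of products} and \cite[Lemma 5.5]{BLP23}.

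The hypothesis $\bm a_j\ne\bm 0$ enters through \Cref{fibers are finite if partial row sums are nonnegative}. Finite fibers make $P_M$ well defined, and they make $B_{(\bm 0,\bm k)}(M,\bm b)$ equal the full count for large $\bm k$.
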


\subsection{An Open Question}

\Cref{main-Barvinok} is in fact based upon a stronger coefficient-wise positivity result (\Cref{thm- Midpoint log-concavity for TU matrices}) which follows from the bijective proof which we give in \Cref{section: TU matrices}. This theorem raises an open question, which is the natural generalization of Barvinok's question in \cite{Barvinok2007BM}.

\begin{question} \label{open-question-LC}
    Let $M \in \Z^{m \times n}$ be a unimodular matrix and $\mu$ a probability distribution on $\Z^m$ (perhaps with finite support) such that its mean is $\bm{b} \in \Z^m$. Is it that case that
    \[
        \#\{\bm{y} \in \Z_{\geq 0}^n \mid M\bm{y} = \bm{b}\} \geq \prod_{\bm{a} \in \Z^m} \#\{\bm{y} \in \Z_{\geq 0}^n \mid M\bm{y} = \bm{a}\}^{\mu(\bm{a})}?
    \]
\end{question}

Note that for functions/distributions over $\R^m$, \Cref{open-question-LC} would immediately follow from \Cref{main-Barvinok}. However, for functions/distributions over $\Z^m$, convexity on lines does not imply global convexity in general.

We believe that a positive answer to \Cref{open-question-LC} would enable us to obtain stronger capacity bounds (hopefully simply exponential) in general, using the following idea. Currently, most capacity bounds are proved inductively on each variable of a given polynomial or generating series. Because of this, the bounds often depend on distances of the desired marginals or degree vector (e.g. the $\bm{b}$ vector) to the boundary of the Newton polyhedron in the coordinate directions. The coordinate direction distance can be very bad for general TU polytopes, and so we need to adapt the application of the capacity bounds to handle this. Given the inequalities of \Cref{open-question-LC}, it is perhaps possible that this capacity induction argument may be able to be exchanged with a geometric argument related to distances to the boundary of the Newton polyhedron in any choice of directions. This would yield much better bounds and approximation factors, if possible.



\subsection{Previous and Related Work}

Bounding and approximately counting lattice points in totally unimodular
polytopes has attracted considerable attention. A variety of bounds and
approximation algorithms, both randomized and deterministic, are known
for important classes of such polytopes. On the randomized side, fully
polynomial randomized approximation schemes (FPRASs) have been obtained
for bipartite perfect matchings \cite{JSV04}, binary contingency tables
\cite{BBV07}, and nonnegative integer contingency tables under restrictions
such as a fixed number of rows or sufficiently large margins
\cite{CD03,DKM97,M02}. Related results apply to cell-bounded contingency
tables with a fixed number of rows and to integral flows with sufficiently
large tight capacities \cite{CDR10}. For certain classes of smooth margins,
a randomized approximation algorithm runs in time quasipolynomial in the
total sum of the table entries \cite{BLSY10}. More recently, rapid mixing
of the natural swap chain was established for binary contingency tables
with arbitrary feasible margins \cite{FQW26}.

On the deterministic side, matrix scaling and polynomial capacity methods
give polynomial-time exponential approximations for counting bipartite
perfect matchings \cite{LSW00,Gur06,Gur08}, with subsequent improvements
\cite{GS14,AR25}. Recent preprints obtain a subexponential approximation
factor \cite{KLM26} and a $(1+\varepsilon)^n$ approximation factor for every
fixed $\varepsilon>0$ \cite{DJ26}, where $n$ is the number of vertices on
each side of the bipartition. Convex optimization and polynomial capacity
methods also give (often simply) exponential estimates for contingency tables
\cite{Bar09,Bar10,Gur15,BLP23} and integer flows \cite{Bar09,LM26,LMY26}.
Computationally efficient asymptotic formulas are available in suitable
smooth-margin regimes \cite{BH12}. For contingency tables with a fixed
number of rows, deterministic fully polynomial-time approximation schemes
(FPTASs) are known \cite{GKMSVV11}, with improved running times in the
two-row case \cite{AH21}.

Beyond these special classes, other positive results include polynomial-time
exact counting in fixed dimension \cite{Bar94}, exact counting methods
for nonnegative totally unimodular systems with a fixed number of equations
\cite{Mount00}, algebraic methods for computing counting
polynomials for unimodular systems \cite{DLS03}, and
maximum-entropy Gaussian approximations under well-roundedness assumptions \cite{BH10}.

In contrast, no fully polynomial randomized or deterministic
approximation scheme is known for arbitrary totally unimodular polytopes; such an
algorithm would in particular yield an FPRAS for counting
independent sets in bipartite graphs \cite{GJ23}.
This motivates seeking weaker approximation guarantees that apply uniformly to all bounded TU polytopes, which is what we prove here.

\subsection{AI Declaration}

The AI tool we specifically used was ChatGPT 6 Astra. The authors have verified and take full responsibility for everything written in the paper.

\paragraph{Before the first draft.} The resolution of Barvinok's conjecture/question on the log-concavity on lines of the contingency tables counts (\Cref{main-Barvinok}) was the main AI input to this paper. We generalized this result to all TU polytopes with upper and lower entrywise bounds on the counted lattice points (\Cref{thm- Midpoint log-concavity for TU matrices}), and in fact the AI suggested in a remark that this should be possible. Beyond that, the literature review and much of the associated parts of the introduction were written with the help of AI.

\paragraph{After the first draft.} We used AI to check for errors or to make suggestions on the flow or style of the paper. From this we fixed a number of errors which were mostly minor; the main issue that was fixed here were proofs regarding limiting arguments for capacity and for sequences of Laurent polynomials and series. In particular \Cref{cap of PMk converges to cap PM} and the surrounding results were added at this point. Additionally, AI also suggested a way to improve the bound in \Cref{main-thm} via \Cref{bound for finite fibers}. We also originally had referenced \cite{SV14,SV19} as the way to prove algorithmic efficiency in \Cref{main-algo}, but we had left out the details. We used AI to fill in those details, and now we have the proof of \Cref{main-algo} in \Cref{efficiency-proof}. Further, AI also found that the central-fiber argument in our (AI-assisted) proof of \Cref{main-Barvinok} has a direct antecedent in the graph-orientation argument of \cite{CI20}. Finally, AI also suggested many changes to the structure, ordering, and content of the introduction and title. We took many of those suggestions, and then wrote the new introduction ourselves.

\section{$\LC$ Laurent polynomials}
\label{section: LC polynomials}

Gurvits' capacity function has been widely used to obtain lower bounds for various interesting combinatorial quantities.
The underlying idea of these results is that if a (Laurent) polynomial has a certain log-concavity property, then one could use the theory of log-concave polynomials to derive lower bounds for its coefficients. However, currently studied log-concavity properties (e.g., Lorentzian, denormalized/dually Lorentzian, etc.) are all conditions which are too strong to apply to TU polytopes in general.

Thus, in \Cref{subsection: definition and capacity bounds} we introduce a class of Laurent polynomials with weaker log-concavity properties, for which capacity lower bounds are still applicable. We further explore basic properties of this class throughout \Cref{subsection: examples and properties}. We then turn our focus to unimodular and totally unimodular (TU) matrices in \Cref{section: TU matrices} and prove that the cardinalities of the fibers of a unimodular polytope are log-concave along lines.

\subsection{Definition and capacity bounds}
\label{subsection: definition and capacity bounds}

There are two key steps in proving capacity bounds for a denormalized Lorentzian polynomial $p$: first that any evaluation of $p$ in all but one variable is a univariate polynomial with log-concave coefficients, and second that $[x_i^{\alpha_i}]p$ satisfies the first condition for any $i, \alpha_i$. We will take these two properties and define a new class of Laurent polynomials. Capacity lower bounds then naturally extend to this class.

Recall that a sequence $\{a_k\}_{k \in \Z}$ of non-negative numbers is said to be log-concave if it has no internal zeros and $a_k^2 \ge a_{k-1} \cdot a_{k+1}$ for any valid index $k$. A Laurent polynomial is a sum $p(\bm{x}) = \sum_{\bm{\alpha}\in \Z^n} p_{\bm{\alpha}}\bm{x^{\alpha}}$ with a finite support, where the support of $p$ denoted by $\supp(p)$ is the set $\{\bm{\alpha} \in \Z^n: p_{\bm{\alpha}} \neq 0 \}$. We further say that a univariate Laurent polynomial $p(x) = \sum_{k \in \Z} p_kx^k$ has a log-concave coefficient sequence if $\{ p_k\}_{k \in \Z}$ is a log-concave sequence. 

\begin{definition}
\label{definition: LC polynomials}
Define: 
\[
\LC_1 = \{ p(x) \in \R[x^{\pm1}] : p \text{ has a log-concave coefficient sequence} \}. 
\]
and for any $n \ge 2$, let $\LC_n$ contain all $n$-variate Laurent polynomials $p(x_1, \cdots, x_n) = \sum_{\bm{\alpha} \in \Z^n}p_{\bm{\alpha}}\bm{x^{\alpha}}$ with non-negative coefficients such that:
\begin{itemize}
    \item Evaluating $p$ at any $n-1$ variables with positive real numbers belongs to $\LC_1$, 
    \item $[x_i^{\alpha_i}] p$ is in $\LC_{n-1}$ for any $i \in [n]$ and $\alpha_i \in \Z$.
\end{itemize}    

Equivalently, $p(x_1, \cdots, x_n) \in \R_{\ge 0}[\bm{x},\bm{x}^{-1}]$ is an element of $\LC_n$ if for any $S \subseteq [n]$ and any $\bm{\alpha} \in \Z^S$, any positive evaluation of $[\prod_{i \in S}{x}_i^{{\alpha}_i}] p$ in all but one variable has log-concave coefficients. In a sense, the coefficients of $p$ are log-concave after applying certain variable-wise operations, hence the name $\LC$.

More generally, we will say that a polynomial $p(x_1, \cdots,x_n)$ is $\LC$ if it belongs to $\LC_n$. 
\end{definition}

Lorentzian polynomials and denormalized Lorentzian polynomials are both subclasses of $\LC$ (see \cite[\S 3.1]{BH20}). In fact, this class of Laurent polynomials is crafted in a way that the capacity bound for the coefficients of denormalized Lorentzian Polynomials extends naturally to $\LC$ Laurent polynomials.

\begin{proposition}
\label{capacity bound for LC polynomials}
    Suppose $p (x_1, \cdots, x_n) \in \LC_n$ and $\bm{\alpha} \in \Z^n$, then: 
    \[
    [\bm{x^{\alpha}}] p(\bm{x}) \ge \cpc_{\bm{\alpha}}(p) \cdot \prod_{i = 1}^n \frac{|\beta_i - \alpha_i|^{|\beta_i - \alpha_i|}}{(1 + |\beta_i - \alpha_i|)^{1 + |\beta_i - \alpha_i|}}
    \]
    where $\beta_i$ is either a lower bound or an upper bound for $\Deg_{i}^{\bm{\alpha}}(p)$, the set of degrees of $x_i$ appearing in $[{x_{i+1}^{\alpha_{i + 1}}} \cdots x_n^{\alpha_n}] p$.
\end{proposition}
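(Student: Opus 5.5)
The plan is to prove the bound by induction on $n$, peeling off one variable at a time, starting with $x_1$. This is the usual Gurvits argument: the key univariate input is the following lemma for log-concave coefficient sequences.

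\emph{Univariate lemma.} Let $q(t) = \sum_k q_k t^k$ have a log-concave coefficient sequence, and let $\beta$ bound the support of $q$ on one side, say $\beta \le k$ for all $k \in \supp(q)$. Then
\[
q_\alpha \ge \frac{d^d}{(1+d)^{1+d}} \inf_{t>0} \frac{q(t)}{t^\alpha}, \qquad d = |\beta - \alpha|.
\]
I would prove this by the standard comparison argument. By log-concavity, the ratios $q_{k+1}/q_k$ are nonincreasing. Choose $t_0 = q_\alpha/q_{\alpha+1}$, or a suitable ratio near $\alpha$. Then bound $\sum_{k \ge \alpha} q_k t_0^{k-\alpha} \le q_\alpha \sum_{j\ge 0} r^j$ and the terms below $\alpha$ similarly. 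Because the support is bounded below by $\beta$, only $d$ terms lie below $\alpha$. Optimizing over the ratio $r$ then gives the factor $(1+d)^{1+d}/d^d$, i.e.\ $\max_r (1+r+\dots)$ subject to the truncation. The upper-bound case follows by the substitution $t \mapsto t^{-1}$. If $\alpha$ falls outside the support, the capacity is $0$ and the statement is trivial.

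\emph{Inductive step.} Fix $x_2,\dots,x_n>0$. By the first defining property of $\LC_n$, $q(x_1) = p(x_1,x_2,\dots,x_n)$ lies in $\LC_1$. Its support in $x_1$ is contained in the set of all $x_1$-degrees of $p$; I need $\beta_1$ to bound $\Deg_1^{\bm\alpha}(p)$, the $x_1$-degrees of $[x_2^{\alpha_2}\cdots x_n^{\alpha_n}]p$. The cleanest route is to reverse the order: first extract the coefficients of $x_n^{\alpha_n}, \dots, x_2^{\alpha_2}$, and apply the univariate lemma last to $x_1$. Concretely, I run the induction from the top variable down. Let $p_n = p$ and $p_{i-1} = [x_i^{\alpha_i}] p_i$. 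By the second defining property, each $p_{i-1}$ lies in $\LC_{i-1}$, and the support in $x_i$ of $p_i$ is exactly $\Deg_i^{\bm\alpha}(p)$. At step $i$, I evaluate $x_1,\dots,x_{i-1}$ at arbitrary positives, apply the univariate lemma in $x_i$ to $p_i$, and obtain
\[
p_{i-1}(x_1,\dots,x_{i-1}) \, x^{-\alpha_{<i}} \ge c_i \inf_{x_i>0} p_i(x_1,\dots,x_i)\, x^{-\alpha_{\le i}} \ge c_i \cpc_{\alpha_{\le i}}(p_i).
\]
Taking the infimum over $x_1,\dots,x_{i-1}$ gives $\cpc_{\alpha_{<i}}(p_{i-1}) \ge c_i \cpc_{\alpha_{\le i}}(p_i)$. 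Chaining these from $i=n$ down to $i=1$ yields $[\bm x^{\bm\alpha}]p = p_0 \ge \prod_i c_i \cdot \cpc_{\bm\alpha}(p)$.

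\emph{Main obstacle.} The delicate part is the univariate lemma with a one-sided bound, for Laurent sequences with possibly infinite tails on the other side. This needs care because log-concavity with no internal zeros must be used to control the geometric tail. I would first handle the case where $q_\alpha>0$ and the capacity infimum is attained by the ratio choice, then treat the degenerate cases where $\alpha$ sits at the support boundary ($d=0$, factor $1$) separately.
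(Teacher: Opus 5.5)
Your proposal is correct and is essentially the paper's proof. The paper also handles $x_n$ first, where $\beta_n$ bounds the $x_n$-degrees of $p$ itself: it applies the univariate bound to $p(r_1,\dots,r_{n-1},x_n)$, takes the infimum over $r$ to get $\cpc_{\bm{\alpha}}(p)\le c_n^{-1}\cpc_{(\alpha_1,\dots,\alpha_{n-1})}([x_n^{\alpha_n}]p)$, and then inducts on $[x_n^{\alpha_n}]p$, which is exactly your chain $\cpc_{\alpha_{<i}}(p_{i-1})\ge c_i\,\cpc_{\alpha_{\le i}}(p_i)$.

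The only difference is that the paper cites the univariate case (\cite[Lemma 4.20]{LMY26}, \cite[Lemma 5.7]{BLP23}) rather than proving it. In your sketch of that lemma, the literal choice $t_0=q_\alpha/q_{\alpha+1}$ makes the comparison ratio exactly $1$, which does not give the constant. Instead, use $q_k\le q_\alpha\nu^{k-\alpha}$ for all $k$, with any $\nu>0$ satisfying $q_{\alpha+1}/q_\alpha\le\nu\le q_\alpha/q_{\alpha-1}$. This gives $q(t)t^{-\alpha}\le q_\alpha s^{-d}/(1-s)$ for $s=\nu t\in(0,1)$, whose infimum over $s$ is the factor $(1+d)^{1+d}/d^d$.
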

\begin{proof}
    The proof is done by induction over the number of variables of $p$.  For $n = 1$, the proof follows from the case for denormalized Lorentzian Laurent polynomials by \cite[Lemma 4.20]{LMY26} (see also \cite[Lemma 5.7]{BLP23}). Note that the homogenization of the polynomial $\sum_{k \in \Z} a_k x^k$ is denormalized Lorentzian iff $\{ a_k\}_{k \in \Z}$ is a log-concave sequence, i.e. $\sum_{k \in \Z} a_k x^k$ is $\LC$. 
    
    Suppose $n \ge 2$. Then for any $(r_1, \cdots, r_{n-1}) \in \R_{> 0}^{n-1}$: 
    \begin{align*}
        \cpc_{\bm{\alpha}}(p) &\le \inf_{x_n >0 }\frac{p(r_1, \cdots, r_{n-1}, x_n)}{r_1^{\alpha_1} \cdots r_{n-1}^{\alpha_{n-1}}x_n^{\alpha_n}} \\
        & = \frac{1}{r_1^{\alpha_1} \cdots r_{n-1}^{\alpha_{n-1}}} \inf_{x_n > 0} \frac{p(r_1, \cdots, r_{n-1}, x_n)}{x_n^{\alpha_n}} \\
        & = \frac{1}{r_1^{\alpha_1} \cdots r_{n-1}^{\alpha_{n-1}}}\cpc_{\alpha_n} p(r_1, \cdots, r_{n-1}, x_n) \\
        & \le \frac{1}{r_1^{\alpha_1} \cdots r_{n-1}^{\alpha_{n-1}}}\cdot \frac{(1 + |\beta_n - \alpha_n|)^{1 + |\beta_n - \alpha_n|}}{|\beta_n - \alpha_n|^{|\beta_n - \alpha_n|}} \cdot [x_n^{\alpha_n}] p(r_1, \cdots, r_{n-1}, x_n)
    \end{align*}
    Since this is true for any $(r_1, \cdots, r_{n-1})$, we conclude: 
    \begin{align*}
        \cpc_{\bm{\alpha}}(p) &\le \frac{(1 + |\beta_n - \alpha_n|)^{1 + |\beta_n - \alpha_n|}}{|\beta_n - \alpha_n|^{|\beta_n - \alpha_n|}} \cdot \inf_{r_1, \cdots, r_{n-1} > 0}\frac{[x_n^{\alpha_n}] p(r_1, \cdots, r_{n-1}, x_n)}{r_1^{\alpha_1} \cdots r_{n-1}^{\alpha_{n-1}}} \\ &=\frac{(1 + |\beta_n - \alpha_n|)^{1 + |\beta_n - \alpha_n|}}{|\beta_n - \alpha_n|^{|\beta_n - \alpha_n|}} \cdot \cpc_{(\alpha_1, \cdots, \alpha_{n-1})}([x_n^{\alpha_n}] p).
    \end{align*}
    The proof is then completed by the induction hypothesis for $[x_n^{\alpha_n}]p$.   
    \end{proof}

\subsection{Examples and properties}
\label{subsection: examples and properties}
As mentioned before, any denormalized Lorentzian polynomial is $\LC$. It is easy to see that any ``shift" of a denormalized Lorentzian polynomial is also $\LC$, i.e. $\bm{x^{\alpha}}p(\bm{x})$ is $\LC$ if $p$ is denormalized Lorentzian. However, the class of $\LC$ Laurent polynomials contains a much larger set of Laurent polynomials. The following is an example of a Laurent polynomial that is not necessarily denormalized Lorentzian but is $\LC$:
\begin{example}
\label{example: a small example}
    Let $n,k \in \Z_{\ge 0}$ and $\bm{b} \in \{0, \pm 1 \}^n$, then:
   \[
   p(x_1, \cdots, x_n) = \sum_{t = 0}^k \bm{x}^{t \cdot \bm{b}} 
   \]
   is $\LC$. We proceed by induction on $n$. If $n = 1$, the statement is trivial since the coefficients sequence of $p$ is the all ones sequence. Now suppose $n \ge 2$ and assume the statement holds for $n-1$. Let us check both of the conditions of \Cref{definition: LC polynomials} for $p$. 
   
   First evaluate $p$ at all but one variable, say $x_{i}$. It is easy to check that the coefficients of the polynomial $\restr{p(\bm{x})}{x_j = r_j \forall j \neq i}$ form a geometric sequence and thus are log-concave. 
   Furthermore, if  $b_i \neq 0$, the Laurent polynomial $[x_i^{\alpha_i}] p(\bm{x})$ is either the zero polynomial or just a monomial and is $\LC$. If $b_i = 0$ however, $[x_i^{\alpha_i}]p$ is really a Laurent polynomial in $n-1$ variables which is $\LC$ by the induction hypothesis. 

    As another example, suppose $\bm{b}_1, \bm{b}_2 \in \{0, 1 \}^n$, then the series: \[q(\bm{x}) = \sum_{t = 0}^k \bm{x}^{t\cdot \bm{b}_1}  \cdot \sum_{t = 0}^k \bm{x}^{t \cdot \bm{b}_2}\] 
    can also be shown to be $\LC$ by basic algebra and some extra effort. We have refrained from including the details of this proof for brevity. One could see that this polynomial is indeed $\LC$ by \Cref{pAB is in LC}. 
\end{example}

For general $\bm{b}, \bm{b}_1, \bm{b}_2$, the supports of the Laurent polynomials $p$ and $q$ in \Cref{example: a small example} need not be $M$-convex. Thus, neither is necessarily a denormalized Lorentzian Laurent polynomial. This raises the question of characterizing the support of an $\LC$ Laurent polynomial.  

\begin{proposition}
    \label{support is saturated for LC polynomials}
    If $p(\bm{x}) = \sum_{\bm{\alpha} \in \Z^n} p_{\bm{\alpha}}\bm{x^{\alpha}}$ is an $\LC$ Laurent polynomial, then its support is a finite saturated set. 

    $S \subseteq \Z^n$ is called a saturated set if $S = \conv(S) \cap \Z^n$, where $\conv(S)$ is the convex hull of $S$. 
\end{proposition}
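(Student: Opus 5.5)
We argue by induction on $n$, the key point being that positivity of evaluations lets us take tropical limits of the log-concavity inequalities, which control the convex hulls of the slices of $\supp(p)$. Write $S=\supp(p)$; finiteness is immediate since $p$ is a Laurent polynomial. For $n=1$, a log-concave sequence has no internal zeros by definition, so $S$ is an integer interval and hence saturated. For $n\ge 2$, slice $S$ along the last coordinate: for $k\in\Z$ let $S_k=\{\bm{\alpha}'\in\Z^{n-1}:(\bm{\alpha}',k)\in S\}$. Then $S_k=\supp([x_n^{k}]p)$. Since $[x_n^k]p\in\LC_{n-1}$ by the second condition of \Cref{definition: LC polynomials}, the induction hypothesis makes each $S_k$ a finite saturated set. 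It therefore suffices to prove the following: for every $\bm{\gamma}\in\conv(S)\cap\Z^n$ we have $\bm{\gamma}'\in\conv(S_{\gamma_n})$, where $\bm{\gamma}'$ is $\bm{\gamma}$ without its last coordinate. Saturation of $S_{\gamma_n}$ then gives $\bm{\gamma}'\in S_{\gamma_n}$, i.e.\ $\bm{\gamma}\in S$.

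The tool is the first condition of \Cref{definition: LC polynomials}, applied with exponentially scaled evaluations. Fix $\bm{w}\in\R^{n-1}$ and evaluate $x_i=t^{w_i}$ for $i<n$ and $t>0$. The coefficient of $x_n^k$ becomes
\[
c_k(t)=\sum_{\bm{\alpha}'\in S_k}p_{(\bm{\alpha}',k)}\,t^{\bm{w}\cdot\bm{\alpha}'} .
\]
These coefficients form a log-concave sequence in $k$. In particular, taking $\bm{w}=0$, the set $K=\{k: S_k\neq\emptyset\}$ is an integer interval, because the coefficients are positive exactly for $k\in K$ and log-concave sequences have no internal zeros. For $k\in K$ let
\[
h_k(\bm{w})=\max_{\bm{\alpha}'\in S_k}\bm{w}\cdot\bm{\alpha}'
\]
be the support function of $\conv(S_k)$. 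As $t\to\infty$ we have $\log c_k(t)=h_k(\bm{w})\log t+O(1)$, because all coefficients are positive. Dividing $2\log c_k\ge\log c_{k-1}+\log c_{k+1}$ by $\log t$ and letting $t\to\infty$ gives $2h_k(\bm{w})\ge h_{k-1}(\bm{w})+h_{k+1}(\bm{w})$ for all interior $k\in K$ and all $\bm{w}$. Hence, for each fixed $\bm{w}$, the sequence $k\mapsto h_k(\bm{w})$ is discretely concave on the interval $K$, and so it dominates its linear interpolations: for $j<k<l$ in $K$ with $k=\lambda j+(1-\lambda)l$ we get $h_k\ge\lambda h_j+(1-\lambda)h_l$ pointwise.

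Now take $\bm{\gamma}\in\conv(S)$. Write $\bm{\gamma}$ as a convex combination of points of $S$, and group the terms by last coordinate. This expresses $\bm{\gamma}'$ as $\sum_{k}\mu_k\bm{z}_k$, where $\bm{z}_k\in\conv(S_k)$, the $\mu_k$ are convex weights and $\sum_k\mu_k k=\gamma_n$. By a Carathéodory-type reduction in the single coordinate $x_n$, we may replace this by a combination of just two slices $j\le\gamma_n\le l$: the support function of $\sum_k\mu_k\conv(S_k)$ is $\sum_k\mu_kh_k$, and repeatedly merging terms using the interpolation inequality bounds it by $h_{\gamma_n}$. Either way we obtain
\[
\bm{w}\cdot\bm{\gamma}'\le\sum_k\mu_kh_k(\bm{w})\le h_{\gamma_n}(\bm{w})\quad\text{for all }\bm{w}.
\]
Since $h_{\gamma_n}$ is the support function of the closed convex set $\conv(S_{\gamma_n})$, it follows that $\bm{\gamma}'\in\conv(S_{\gamma_n})$, which completes the induction.

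The step I expect to need the most care is the middle one: the tropical limit turning log-concavity of the $c_k$ into concavity of the support functions $h_k$. One must check that no cancellation occurs, which holds because all coefficients are nonnegative and the $p_{\bm{\alpha}}$ with $\bm{\alpha}\in S$ are positive. One must also check that the boundary of $K$ causes no trouble, which is handled by $K$ being an interval. The final step, passing from concavity of the sequence $h_k$ to the inequality $\sum_k\mu_kh_k\le h_{\gamma_n}$, is the standard fact that a discretely concave sequence lies above its chords, applied pointwise in $\bm{w}$.
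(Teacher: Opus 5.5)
Your proof is correct, and it reaches the result by a genuinely different mechanism than the paper.

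The paper's proof is a two-line corollary of machinery it needs anyway. For $\bm{\alpha}\in\newt(p)\cap\Z^n$ one has $\cpc_{\bm{\alpha}}(p)>0$ by \Cref{capacity is nonzero iff alpha is in newton polytope}. The lower bound of \Cref{capacity bound for LC polynomials}, whose correction factors are strictly positive, then forces $p_{\bm{\alpha}}>0$.

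Unwinding that lower bound, its inductive step gives $\cpc_{\bm{\alpha}}(p)\le C\cdot\cpc_{\bm{\alpha}'}([x_n^{\alpha_n}]p)$ with $C<\infty$ and $\bm{\alpha}'=(\alpha_1,\ldots,\alpha_{n-1})$. That is, $\bm{\alpha}\in\newt(p)$ implies $\bm{\alpha}'\in\newt([x_n^{\alpha_n}]p)$, which is precisely your slice statement. So both arguments run the same induction on the last variable. The paper gets the key step analytically, from the univariate capacity bound for log-concave sequences imported from \cite{LMY26,BLP23}. You get it tropically: letting $x_i=t^{w_i}$ with $t\to\infty$ turns log-concavity into concavity in $k$ of the support functions $h_k(\bm{w})$, and Jensen plus separation finish.

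The paper's route is shorter given its infrastructure, and it is quantitative, giving an explicit positive lower bound on $p_{\bm{\alpha}}$. Yours is self-contained and uses nothing but the definition of $\LC$. It also yields a sharper structural fact: $\conv(S_k)\supseteq\lambda\conv(S_j)+(1-\lambda)\conv(S_l)$ whenever $k=\lambda j+(1-\lambda)l$ in $K$, so the slices of $\newt(p)$ at integer heights are exactly the Newton polytopes of the coefficient slices.

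The one place to tighten is the ``Carath\'eodory-type merging''. State it as Jensen's inequality for the piecewise-linear interpolation of $k\mapsto h_k(\bm{w})$ on $\conv(K)$, which is concave by your second-difference inequality. Also say explicitly that $\gamma_n\in K$: $\gamma_n$ is an integer lying between slices with $\mu_k>0$, and $K$ is an integer interval, so $h_{\gamma_n}$ is defined.
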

Before proving this proposition, let us recall a well-known fact about $\newt(p) := \conv(\supp(p))$, also known as the Newton polytope of $p$. 
\begin{fact}[{\cite[Fact 2.18]{AO17}, see also \cite[Lemma 4.15]{LMY26}}]
\label{capacity is nonzero iff alpha is in newton polytope}
    If $p$ is a nonzero Laurent polynomial (or a Laurent series with a nonempty domain of convergence and a closed Newton polyhedron) with nonnegative coefficients, then $\cpc_{\bm{\alpha}}(p) > 0 $ iff $\alpha \in \newt(p)$. 
\end{fact}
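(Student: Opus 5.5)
The plan is to pass to logarithmic coordinates and treat the two directions separately: a weighted AM--GM lower bound when $\bm{\alpha} \in \newt(p)$, and a separating-hyperplane argument when $\bm{\alpha} \notin \newt(p)$. Write $p(\bm{x}) = \sum_{\bm{\beta}} p_{\bm{\beta}} \bm{x}^{\bm{\beta}}$ with all $p_{\bm{\beta}} \ge 0$. Recall that $\cpc_{\bm{\alpha}}(p) = \inf_{\bm{x} > 0} p(\bm{x})\bm{x}^{-\bm{\alpha}}$, where for a series the infimum runs over the positive points of the domain of convergence.

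\emph{($\Leftarrow$) Lower bound.} Suppose $\bm{\alpha} \in \newt(p)$. Since $\newt(p)$ is the convex hull (or, for series, the closed convex hull) of $\supp(p)$, I will first write $\bm{\alpha} = \sum_{\bm{\beta} \in B} \lambda_{\bm{\beta}} \bm{\beta}$. Here $B \subseteq \supp(p)$ is finite, $\lambda_{\bm{\beta}} > 0$ and $\sum \lambda_{\bm{\beta}} = 1$. For a polynomial this is Carath\'eodory's theorem. For a series, the hypothesis that the Newton polyhedron is closed should let me take $\newt(p) = \conv(\supp(p))$ directly, so a finite convex combination again exists. Now drop all terms outside $B$ and apply weighted AM--GM to the rest. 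For every $\bm{x} > 0$ (in the domain, in the series case) this gives
\[
p(\bm{x})\bm{x}^{-\bm{\alpha}} \ge \sum_{\bm{\beta}\in B} \lambda_{\bm{\beta}}\frac{p_{\bm{\beta}}}{\lambda_{\bm{\beta}}}\bm{x}^{\bm{\beta}-\bm{\alpha}} \ge \prod_{\bm{\beta}\in B}\Big(\frac{p_{\bm{\beta}}}{\lambda_{\bm{\beta}}}\Big)^{\lambda_{\bm{\beta}}} \bm{x}^{\sum_{\bm{\beta}} \lambda_{\bm{\beta}}(\bm{\beta}-\bm{\alpha})}.
\]
The exponent on the right is $\sum_{\bm{\beta}} \lambda_{\bm{\beta}}(\bm{\beta}-\bm{\alpha}) = \bm{0}$, so the right-hand side is a positive constant independent of $\bm{x}$. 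Taking the infimum over $\bm{x}$ then gives $\cpc_{\bm{\alpha}}(p) > 0$.

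\emph{($\Rightarrow$) Capacity zero outside.} Suppose $\bm{\alpha} \notin \newt(p)$. Since $\newt(p)$ is closed and convex, strict separation gives $\bm{c} \in \R^n$ and $\delta > 0$ with $\langle \bm{\beta} - \bm{\alpha}, \bm{c}\rangle \le -\delta$ for every $\bm{\beta} \in \supp(p)$. Fix any positive point $\bm{x}_0$ where $p$ converges. For $t \ge 0$ set $\bm{x}_t = \bm{x}_0 \circ e^{t\bm{c}}$ (coordinatewise product). Termwise we have
\[
p_{\bm{\beta}}\bm{x}_t^{\bm{\beta}-\bm{\alpha}} = p_{\bm{\beta}}\bm{x}_0^{\bm{\beta}-\bm{\alpha}} e^{t\langle \bm{\beta}-\bm{\alpha},\bm{c}\rangle} \le e^{-t\delta}\, p_{\bm{\beta}}\bm{x}_0^{\bm{\beta}-\bm{\alpha}}.
\]
Summing, and using domination by the convergent series at $\bm{x}_0$ (so $\bm{x}_t$ also lies in the domain of convergence), gives $p(\bm{x}_t)\bm{x}_t^{-\bm{\alpha}} \le e^{-t\delta} p(\bm{x}_0)\bm{x}_0^{-\bm{\alpha}}$. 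Letting $t \to \infty$ shows $\cpc_{\bm{\alpha}}(p) = 0$.

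The main obstacle is confined to the series case. Separation needs $\newt(p)$ to be closed, which is exactly why that hypothesis appears. The $\Leftarrow$ direction needs points of $\newt(p)$ to be honest finite convex combinations of support points, and I will check this follows from the closedness assumption as intended. Nonnegativity of the coefficients is what makes both termwise comparisons legitimate, namely dropping terms in the AM--GM bound and dominating the shifted series.
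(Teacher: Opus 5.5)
Your argument is correct and is the standard proof behind this fact; the paper gives no proof of its own and defers to \cite{AO17} and \cite{LMY26}. The argument is weighted AM--GM on a finite convex combination when $\bm{\alpha} \in \newt(p)$, and strict separation followed by scaling along $\bm{x}_0 \circ e^{t\bm{c}}$ when $\bm{\alpha} \notin \newt(p)$. One small clarification: the paper defines $\newt(p) = \conv(\supp(p))$, so in the $\Leftarrow$ direction the finite convex combination exists by definition. The closedness hypothesis is needed only for the strict separation step, and nonemptiness of the domain of convergence is needed only to choose $\bm{x}_0$.
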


\begin{proof}[Proof of \Cref{support is saturated for LC polynomials}]
    It suffices to show that $p_{\bm{\alpha}}> 0$ for any $\bm{\alpha} \in \newt(p) \cap \Z^n$. By \Cref{capacity is nonzero iff alpha is in newton polytope}, for any such $\bm{\alpha}$, $\cpc_{\bm{\alpha}}(p) > 0$. Thus, by \Cref{capacity bound for LC polynomials}, we have:
    \[
    p_{\bm{\alpha}} \ge \cpc_{\bm{\alpha}}(p) \cdot \prod_{i = 1}^n \frac{|\beta_i - \alpha_i|^{|\beta_i - \alpha_i|}}{(1 + |\beta_i - \alpha_i|)^{1 + |\beta_i - \alpha_i|}} > 0.
    \]
\end{proof}

One would hope that any finite saturated set would be the support of some $\LC$ Laurent polynomial, but this is not the case. For instance, the set $S = \{(0,0), (1,0), (2,1) \}$ is a saturated set, but the polynomial $p(x,y) = a + bx + cx^2y$ is not $\LC$ no matter the choice of $a,b,c > 0$. This is because evaluating $y$ at a large enough $r > 0$ breaks log-concavity of the sequence $a,b,cr$. We currently do not have a necessary and sufficient condition for a set $S$ to arise the support of an $\LC$ Laurent polynomial. 

\begin{proposition}
    The class of $\LC$ Laurent polynomials is closed under scaling variables. 
\end{proposition}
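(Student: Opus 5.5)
We argue by induction on the number of variables $n$, using the equivalent description in \Cref{definition: LC polynomials}: $p \in \R_{\ge 0}[\bm{x},\bm{x}^{-1}]$ is $\LC$ iff for every $S \subseteq [n]$, every $\bm{\alpha} \in \Z^S$, and every index $i \notin S$, every positive evaluation of $[\prod_{j\in S} x_j^{\alpha_j}]p$ in the variables outside $S \cup \{i\}$ has a log-concave coefficient sequence in $x_i$. Fix $\bm{c} = (c_1,\ldots,c_n) \in \R_{>0}^n$ and set $q(\bm{x}) = p(c_1x_1,\ldots,c_nx_n)$, so that $q_{\bm{\alpha}} = \bm{c}^{\bm{\alpha}} p_{\bm{\alpha}}$. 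In particular $q$ has nonnegative coefficients and the same support as $p$.

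The first step is to check that coefficient extraction commutes with scaling, up to a positive constant. For $S \subseteq [n]$ and $\bm{\alpha} \in \Z^S$ we have
\[
[\textstyle\prod_{j\in S} x_j^{\alpha_j}]\, q = \Big(\prod_{j\in S} c_j^{\alpha_j}\Big)\cdot \big([\textstyle\prod_{j\in S} x_j^{\alpha_j}]\, p\big)(c_k x_k : k \notin S).
\]
That is, it equals a positive constant times the corresponding extraction of $p$, with the remaining variables scaled.

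The second step handles evaluation. Evaluating the remaining variables $x_k$ ($k \notin S\cup\{i\}$) at positive reals $r_k$ is the same as evaluating the corresponding extraction of $p$ at the positive reals $c_k r_k$. What remains is a univariate Laurent polynomial of the form $C\cdot f(c_i x_i)$, where $C>0$ and $f$ is a positive evaluation of an extraction of $p$. By hypothesis, $f$ has a log-concave coefficient sequence $\{f_k\}$.

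The final step is the univariate fact that $\{C c_i^k f_k\}_k$ is again log-concave. Positive scaling does not change the support, so there are still no internal zeros. For the inequality, $(C c_i^k f_k)^2 = C^2 c_i^{2k} f_k^2$ and $(C c_i^{k-1} f_{k-1})(C c_i^{k+1}f_{k+1}) = C^2 c_i^{2k} f_{k-1}f_{k+1}$, so the inequality $f_k^2 \ge f_{k-1}f_{k+1}$ transfers directly.

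Running the argument through the equivalent characterization avoids an explicit induction. If we instead use the recursive definition, the same computation shows that $[x_i^{\alpha_i}]q$ is a positive multiple of a scaling of $[x_i^{\alpha_i}]p \in \LC_{n-1}$, and induction applies. We also need that $\LC$ is closed under positive constant multiples, which is immediate.

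There is no real obstacle here. The only point needing care is that the scaling must be by positive reals, so that supports, and hence the no-internal-zeros condition, are preserved.
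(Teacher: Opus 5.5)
Your proof is correct and uses the same two observations as the paper's proof: coefficient extraction and positive evaluation commute with scaling up to a positive constant, and $\{c^k f_k\}_k$ is log-concave whenever $\{f_k\}_k$ is. The only difference is packaging: the paper inducts on $n$ through the recursive definition and scales one variable at a time, while you scale all variables at once through the unrolled characterization, which lets you skip the induction.
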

\begin{proof}
We proceed by induction on the number of variables. If $p(x) = \sum_{k \in \Z}p_kx^k$ has a log-concave coefficient sequence, then so does $p(cx) = \sum_{k \in \Z}c^kp_kx^k$, and the induction basis holds. 

    Let $n \ge 2$ and let $p(x_1, \cdots, x_n)$ be an $\LC$ Laurent polynomial and let $c>0$. We want to show that $q(\bm{x}) = p(cx_1, x_2 , \cdots, x_n)$ is also an $\LC$ Laurent polynomial. Any evaluation of $q$ at all but the variable $x_i$ where $i \neq 1$ has log-concave coefficients, since:
    \[
    \restr{q(\bm{x})}{x_j = r_j, j \neq i} = \restr{p(\bm{x})}{x_1 = cr_1, x_j = r_j j \neq i,1}.
    \]
    If $q$ is evaluated at all but variables except $x_1$, then:
    \[q(x_1, r_2, \cdots, r_n) = p(cx_1, r_2, \cdots, x_n) = \sum_{k  \in \Z} p_k c^kx_1^k,\]
    where $p_k$ is the coefficient of $x_1^k$ in $p(x_1, r_2, \cdots, r_n)$. Since $p$ is $\LC$, the sequence $\{ p_k\}_{k \in \Z} $ is log-concave, and therefore, so is the sequence $\{ c^kp_k\}_{k \in \Z}$. Hence any evaluation of $q$ in $n-1$ variables has log-concave coefficients. 

    Furthermore, we know that $[x_i^{\alpha}]p$ is an $\LC$ Laurent polynomial in $n-1$ variables for any $\alpha \in \Z$ and $i \in [n]$. By the induction hypothesis, for any $i \neq 1$, the Laurent polynomial $[x_i^{\alpha}]q = ([x_i^{\alpha}]p)(cx_1, \cdots, x_{i-1}, x_{i+1}, \cdots ,x_n)$ is $\LC$ as well. Lastly, $[x_1^{\alpha}] q = c^{\alpha} [x_1^{\alpha}]p$ is $\LC$ too and and we are done. 
\end{proof}
\begin{proposition}
    If a sequence $\{ p_i(\bm{x})\}_{i \ge 0}$ of $\LC_n$ Laurent polynomials converges coefficient-wise to a Laurent polynomial $p(\bm{x})$ such that $\supp(p_i) \subseteq \supp(p)$ for all $i \ge 0$, then $p \in \LC_n$. 
\end{proposition}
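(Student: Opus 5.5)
We argue by induction on $n$, following the same two-condition structure as \Cref{definition: LC polynomials}. The key preliminary observation is that the hypothesis $\supp(p_i) \subseteq \supp(p)$ together with coefficient-wise convergence forces all the $p_i$ and $p$ to live on a common finite set, namely $\supp(p)$, which is finite since $p$ is a Laurent polynomial. Hence coefficient-wise convergence is convergence in a finite-dimensional space. Nonnegativity of the coefficients of $p$ follows since each $p_i$ has nonnegative coefficients.

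For the base case $n=1$, write $p_i(x) = \sum_k p_{i,k} x^k$ and $p(x)=\sum_k p_k x^k$. The inequalities $p_{i,k}^2 \ge p_{i,k-1}p_{i,k+1}$ pass to the limit, giving $p_k^2 \ge p_{k-1}p_{k+1}$. The only issue is the ``no internal zeros'' condition, and this is exactly where the support hypothesis enters.

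The hypothesis gives $\supp(p) \supseteq \bigcup_i \supp(p_i)$. On the other hand, every $k \in \supp(p)$ has $p_{i,k} \to p_k > 0$, so $k \in \supp(p_i)$ for all large $i$. Since $\supp(p)$ is finite, for $i$ large we get $\supp(p_i) = \supp(p)$. Each $\supp(p_i)$ is an interval of integers, because a log-concave sequence has no internal zeros. Therefore $\supp(p)$ is an interval, and $p \in \LC_1$.

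For $n \ge 2$ we must check two things.

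\emph{Evaluations.} Fix positive reals $r_j$ for $j \ne \ell$. The univariate Laurent polynomials $q_i(x_\ell) = p_i|_{x_j=r_j}$ converge coefficient-wise to $q = p|_{x_j=r_j}$. Each coefficient is a finite sum over the common finite support, so the convergence is immediate. The $q_i$ are in $\LC_1$ by assumption. For the support condition, $\supp(q_i)$ is the projection of $\supp(p_i)$ onto coordinate $\ell$, since all evaluations are positive and coefficients are nonnegative, so no cancellation occurs. Likewise $\supp(q)$ is the projection of $\supp(p)$. Thus $\supp(q_i) \subseteq \supp(q)$, and the base case applies.

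\emph{Coefficient extractions.} The polynomials $[x_\ell^{\alpha}]p_i$ lie in $\LC_{n-1}$ and converge coefficient-wise to $[x_\ell^{\alpha}]p$. Their supports are the slices $\{\bm\beta \in \supp(p_i) : \beta_\ell = \alpha\}$, which are contained in the corresponding slices of $\supp(p)$. If the limit $[x_\ell^\alpha]p$ is zero, then all the $[x_\ell^\alpha]p_i$ are zero too, by the support inclusion, and the zero polynomial is trivially $\LC$. Otherwise the induction hypothesis applies directly.

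The main (though mild) obstacle is the no-internal-zeros condition: log-concavity inequalities are closed under limits, but positivity of coefficients is not. The argument above handles this using the support hypothesis, which gives eventual equality of supports in the base case. The one point needing care in the inductive step is that evaluations and extractions preserve the support inclusion. For evaluations this relies on positivity of the $r_j$ and nonnegativity of the coefficients, which rule out cancellation.
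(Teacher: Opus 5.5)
Your proof is correct and follows the same induction as the paper. Evaluations reduce to the univariate case and coefficient extractions to the induction hypothesis, and you supply details the paper leaves implicit: that the common finite support makes the evaluations converge, and that support inclusion survives both evaluation and extraction. As a minor aside, the no-internal-zeros condition is in fact closed under limits without any support hypothesis, since a log-concave sequence satisfies $a_j^{k+l} \ge a_{j-k}^{l}\,a_{j+l}^{k}$ and this inequality passes to the limit; so in the proof the hypothesis $\supp(p_i) \subseteq \supp(p)$ is really needed for the convergence of the evaluations.
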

\begin{proof}
    Proceed with induction on $n$. The statement for $n = 1$ follows from the fact that the limit of any log-concave sequence is a log-concave sequence. 
    Let $n \ge 2$. Suppose $r_2, \cdots, r_n >0$. Then: 
    \[
    p(x_1, r_2, \cdots, r_n) = \lim_{i \to \infty} p_i(x_1, r_2, \cdots, r_n),
    \]
    since each $p_i(x_1, r_2, \cdots, r_n)$ has a log-concave coefficient sequence, the coefficient sequence of $p(x_1, r_2, \cdots, r_n)$ is log-concave as well. Moreover, for any $\alpha \in \Z$, $j \in [n]$: 
    \[
    [x_j^{\alpha}] p = \lim_{i \to \infty} [x_j^{\alpha}] p_i.
    \]
     Since $\supp([x_j^{\alpha}]p_i) \subseteq \supp([x_j^{\alpha}]p)$ for all $i \ge 0$, $[x_j^{\alpha}] p$ is $\LC_{n-1}$ by the induction hypothesis, finishing the proof.
\end{proof}
\section{Unimodular and Totally unimodular (TU) polytopes}
\label{section: TU matrices}

The goal of this section is to prove the following log-concavity statement: \[ \left(\sum_{ \substack{M\bm{y} =\frac{\bm{b}^+ + \bm{b}^- }{2}\\ \bm k_1 \le \bm y  \le \bm k_2 }} \bm{w^y} \right)^2 \ge \sum_{ \substack{M\bm{y} ={\bm{b}^+}\\ \bm k_1 \le \bm y  \le \bm k_2  }} \bm{w^y} \cdot \sum_{ \substack{M\bm{y} ={\bm{b}^-}\\ \bm k_1 \le \bm y  \le \bm k_2  }} \bm{w^y},\]
where $M$ is an $m \times n$ unimodular matrix and $\bm{b}^+, \bm b^-, \frac{\bm{b}^+ + \bm b^-}{2}\in \Z^m, \bm{w} \in \R_{>0}^n$ are given vectors. Recall that a matrix $M$ is said to be unimodular if all of its non-singular $\rank(M) \times \rank(M)$ submatrices have determinant $\pm 1$. We will provide an AI-assisted proof that this sequence is log-concave for any $M$ and $\bm{b}$ in \Cref{subsection-midpoint log-concavity for tu}. In \Cref{subsection: solution Laurent polynomials are LC}, we will use this log-concavity statement to show that $\sum_{\bm 0 \le \bm{y} \le \bm{k}} \bm{x}^{M \bm{y}}$ is an $\LC$ Laurent polynomial for any TU matrix $M$. We will then derive capacity bounds for $\# \{\bm{y} \in \Z_{\ge 0}^n: M \bm{y} = \bm{b} , \bm{y} \le \bm{k}\}$ using this Laurent polynomial in \Cref{section: capacity bounds}. 

\subsection{Mid point log-concavity for unimodular polytopes}
\label{subsection-midpoint log-concavity for tu}
The ideas and proofs in this section follow an AI assisted argument for midpoint log-concavity of contingency tables with tweaks to prove what we require for our purposes. In particular, the proofs of \Cref{thm- Midpoint log-concavity for TU matrices}, \Cref{B^2 in terms of E and B} and \Cref{midpoint is heavy} are AI-assisted. We note that the pair-decomposition and induction used in \Cref{B^2 in terms of E and B} and \Cref{midpoint is heavy} generalize the argument from Section~5 of \cite{CI20} from graph orientations to lattice points in boxes subject to unimodular linear constraints.

For any unimodular matrix $M \in \Z^{m \times n}$ and any vectors $\bm{b}, \bm{a} , \bm{k}_1, \bm{k}_2$ of proper dimensions, Define:
\begin{align*}
    \mathcal{B}_{(\bm{k}_1, \bm{k}_2)}(M,\bm{b}) &= \{ \bm{y} :  M \bm{y} = \bm{b}, \bm{k}_1 \le \bm{y} \le \bm{k}_2\},\\
    B_{(\bm{k}_1, \bm{k}_2)}(M,\bm{b}) &= |\mathcal{B}_{(\bm{k}_1, \bm{k}_2)}(\bm{b}) \cap \Z^n|,\\
    \mathcal{E}(M,\bm{a}) &= \{ \bm{y}: M \bm{y} = \frac{M\bm{a}}{2}, y_i \in \{0,a_i\} \forall i \in [n]\},\\
    E(M,\bm{a}) &= |\mathcal{E}(M,\bm{a})|,\\
    T(M,\bm{b}, \bm{k}_1, \bm{k}_2; \bm{w}) & = \sum_{\substack{\bm{k}_1 \le \bm{y} \le \bm{k}_2 \\ M \bm{y} = \bm{b}}} \bm{w}^{\bm{y}}.
\end{align*}
When the choice of the matrix \(M\) is clear from context, we will omit \(M\) from the notation in all of the definitions above. Note that if $E(M , \bm{a}) \neq 0$, then we must have ${M\bm{a}} \in (2 \mathbb{Z})^m $. We aim to prove the following theorem: 
\begin{theorem}
\label{thm- Midpoint log-concavity for TU matrices}
    Suppose $\bm{b,b^+, b^-}$ are integer vectors such that $\bm{b} = \frac{\bm{b^+} + \bm{b^-}}{2}$. Then for any $\bm{k}_1, \bm{k}_2$:
    \[
    T(\bm{b}, \bm{k}_1, \bm{k}_2; \bm{w})^2 - T(\bm{b^+}, \bm{k}_1, \bm{k}_2; \bm{w}) \cdot T(\bm{b^-}, \bm{k}_1, \bm{k}_2; \bm{w})
    \]
    is a Laurent polynomial in $\bm{w}$ with nonnegative coefficients. 
\end{theorem}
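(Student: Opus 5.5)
The plan is to reduce the inequality to a pointwise comparison. We will show that, for every fixed sum vector $\bm{s}$, the pairs with $M$-values $(\bm{b},\bm{b})$ outnumber the pairs with $M$-values $(\bm{b}^+,\bm{b}^-)$. Expanding both products gives
\[
T(\bm{b})^2=\sum_{\bm{s}} \bm{w}^{\bm{s}}\,\#\{(\bm{u},\bm{v}) : \bm{u}+\bm{v}=\bm{s},\ M\bm{u}=M\bm{v}=\bm{b},\ \bm{k}_1\le \bm{u},\bm{v}\le \bm{k}_2\},
\]
and similarly $T(\bm{b}^+)T(\bm{b}^-)=\sum_{\bm{s}}\bm{w}^{\bm{s}}\,N_{\pm}(\bm{s})$ with $N_\pm(\bm{s})$ counting pairs $(\bm{y}^+,\bm{y}^-)$ with $\bm{y}^++\bm{y}^-=\bm{s}$ and $M\bm{y}^\pm=\bm{b}^\pm$ in the box. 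Both sides only involve $\bm{s}$ with $M\bm{s}=2\bm{b}$, so it suffices to prove coefficientwise $N_0(\bm{s})\ge N_\pm(\bm{s})$.

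Fix such an $\bm{s}$. The condition that both $\bm{y}$ and $\bm{s}-\bm{y}$ lie in $[\bm{k}_1,\bm{k}_2]$ is equivalent to $\bm{L}\le\bm{y}\le\bm{U}$, where $\bm{L}=\max(\bm{k}_1,\bm{s}-\bm{k}_2)$ and $\bm{U}=\min(\bm{k}_2,\bm{s}-\bm{k}_1)$ are taken entrywise. This box is symmetric about $\bm{s}/2$, since $\bm{L}+\bm{U}=\bm{s}$. Write $\bm{y}=\bm{L}+\bm{t}$ with $\bm{0}\le\bm{t}\le\bm{a}$, where $\bm{a}:=\bm{U}-\bm{L}$. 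Then $M\bm{s}=2\bm{b}$ gives $M\bm{t}=M\bm{a}/2$ exactly when $M\bm{y}=\bm{b}$, and $M\bm{t}=M\bm{a}/2+\bm{c}$ exactly when $M\bm{y}=\bm{b}^+$, where $\bm{c}=(\bm{b}^+-\bm{b}^-)/2\in\Z^m$. The second coordinate $\bm{y}^-=\bm{s}-\bm{y}^+$ is then forced. Hence the whole theorem reduces to the following lemma (the ``midpoint is heavy'' statement): for every $\bm{a}\in\Z_{\ge0}^n$ and every $\bm{c}\in\Z^m$,
\[
B_{(\bm 0,\bm a)}(M\bm{a}/2)\ \ge\ B_{(\bm 0,\bm a)}(M\bm{a}/2+\bm{c}).
\]
In this lemma, $M\bm{a}/2\in\Z^m$ is guaranteed whenever the left side of the reduction is relevant.

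To prove the lemma I would induct on $\sum_i a_i$, following the pair decomposition of \cite{CI20}. First, I would expand $B(\bm{\beta})^2$ by the same pair trick in the smaller box $[\bm 0,\bm a]$. Each pair $(\bm t,\bm t')$ in the fiber over $\bm\beta$ is recorded by its sum $\bm\sigma=\bm t+\bm t'$. Recentering around the sub-box for $\bm\sigma$ splits the coordinates into ``free'' ones, where the local box has positive width, and the rest. The count of pairs with a given $\bm\sigma$ is then a fiber count in a box $[\bm 0,\bm a']$ with $\bm a'\le \bm a$. When $\bm a'$ is a $0/a_i$-type extreme vector, this count is exactly $E(M,\bm a')$. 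This should give an identity expressing $B^2$ through $E$ and smaller $B$'s (the analogue of \Cref{B^2 in terms of E and B}). Comparing this identity at the center $M\bm a/2$ and at the shifted value $M\bm a/2+\bm c$, and applying the induction hypothesis to every strictly smaller box, should give $B(\text{center})^2\ge B(\text{center})\,B(\text{shift})$. If needed, I would use the symmetry $B(M\bm a/2+\bm c)=B(M\bm a/2-\bm c)$, which comes from the involution $\bm t\mapsto \bm a-\bm t$.

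Unimodularity enters in a nonemptiness step. If the shifted fiber is nonempty, the central fiber polytope $\{\bm 0\le\bm t\le\bm a,\ M\bm t=M\bm a/2\}$ contains the real point $\bm a/2$. We need it to contain an integer point, and more precisely that every ``extremal'' configuration in the shifted fiber has a central counterpart. I expect to obtain this by conformally decomposing $2\bm t-\bm a$, which lies in $\{\bm z: M\bm z=2\bm c\}$, into circuits of $M$. Circuits of a unimodular matrix are $\{0,\pm1\}$-vectors, and half of a conformal sum of circuits can be chosen integrally; this allows flipping $\bm t$ halfway toward the center. I expect this step to be the main obstacle: making the circuit-flipping argument compatible with the box bounds, and turning it into a genuine injection or inequality of counts rather than mere nonemptiness. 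My first attempt would be a weighted Hall / flow argument on the bipartite graph between the shifted fiber and the central fiber, with edges given by single circuit flips.
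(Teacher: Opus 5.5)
Your reduction is correct and is the paper's own: extracting the coefficient of $\bm{w}^{\bm{s}}$ turns both products into fiber counts over the symmetric box $[\max(\bm{k}_1,\bm{s}-\bm{k}_2),\min(\bm{k}_2,\bm{s}-\bm{k}_1)]$. The theorem then becomes ``the central fiber of a box is at least as large as any other fiber.'' The gap is in your proof of that lemma, which is the whole content, and the route you sketch would not close it.

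\textbf{The decomposition.} Recording a pair $(\bm{t},\bm{t}')$ by its sum $\bm{\sigma}$ only repeats the coefficient extraction. It gives $B(\bm{\beta})^2=\sum_{M\bm{\sigma}=2\bm{\beta}}(\text{central count of the sub-box for }\bm{\sigma})$. The index set depends on $\bm{\beta}$, so nothing can be compared term by term between $\bm{\beta}=M\bm{a}/2$ and $\bm{\beta}=M\bm{a}/2+\bm{c}$. (Also, the central count of $[\bm{0},\bm{a}']$ is not $E(\bm{a}')$ unless $\bm{a}'$ is $0/1$.)

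The missing idea is to record a pair by its difference pattern:
$(\bm{t},\bm{t}')\mapsto(\bm{d},\bm{p},\bm{m})=(|\bm{t}-\bm{t}'|,(\bm{t}-\bm{t}')_+,\min(\bm{t},\bm{t}'))$.
This is a bijection onto triples with $\bm{0}\le\bm{d}\le\bm{a}$, $\bm{p}\in\mathcal{E}(\bm{d})$, and $\bm{m}$ an integer point of the fiber over $\bm{\beta}-M\bm{d}/2$ in the smaller box $[\bm{0},\bm{a}-\bm{d}]$. Hence
$B_{(\bm{0},\bm{a})}(\bm{\beta})^2=\sum_{\bm{d}}E(\bm{d})\,B_{(\bm{0},\bm{a}-\bm{d})}(\bm{\beta}-M\bm{d}/2)$.
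Here the weights $E(\bm{d})$ and the boxes do not depend on $\bm{\beta}$. At $\bm{\beta}=M\bm{a}/2$ each term is the central count of $[\bm{0},\bm{a}-\bm{d}]$. At $\bm{\beta}=M\bm{a}/2+\bm{c}$ it is that box's count at the same offset $\bm{c}$ from its center. So the induction hypothesis applies termwise for $\bm{d}\neq\bm{0}$.

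\textbf{Closing the induction.} The comparison has squares on both sides; it does not give $B_c^2\ge B_cB_s$, where $B_c,B_s$ are the central and shifted counts. The $\bm{d}=\bm{0}$ term is $B$ itself on the same box, where induction is unavailable. Moving it to the left gives $B_c^2-B_c\ge B_s^2-B_s$. Since $x\mapsto x^2-x$ is increasing on $\Z_{\ge 0}$ except that $0$ and $1$ tie, this yields $B_c\ge B_s$ as soon as $B_c\ge 1$.

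So nonemptiness of the central integer fiber is all that is needed. No injection, Hall, or flow argument is required, and your expected ``main obstacle'' is a non-issue. Nonemptiness is immediate: $\bm{a}/2$ lies in the bounded polytope $\{\bm{0}\le\bm{t}\le\bm{a},\ M\bm{t}=M\bm{a}/2\}$, which therefore has a vertex, and that vertex is integral by unimodularity (\Cref{unimodular polytopes have integer vertices}).

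Your circuit-flipping plan for this step is also ill-posed as written. Since $M(2\bm{t}-\bm{a})=2\bm{c}\neq\bm{0}$, the vector $2\bm{t}-\bm{a}$ is not in $\ker M$ and has no conformal decomposition into circuits of $M$.
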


In fact, we will explicitly express the coefficients of $T(\bm{b}, \bm{k}_1, \bm{k}_2; \bm{w})^2 - T(\bm{b^+}, \bm{k}_1, \bm{k}_2; \bm{w}) \cdot T(\bm{b^-}, \bm{k}_1, \bm{k}_2; \bm{w})$ in terms of $B_{( \bm{k}_1, \bm{k}_2)} (\bm{b})$, $B_{( \bm{k}_1, \bm{k}_2)} (\bm{b}^+)$ and $B_{( \bm{k}_1, \bm{k}_2)} (\bm{b}^-)$ and prove that these coefficients are non-negative using \Cref{B^2 in terms of E and B}, \Cref{midpoint is heavy} and \Cref{unimodular polytopes have integer vertices}. We then use \Cref{thm- Midpoint log-concavity for TU matrices} to prove that the Ehrhart of a unimodular polytope has log-concave evaluations, see \Cref{Ehrhart polynomials of unimodular polytopes are log-concave}. We explain why our methods do not work for a general IDP polytope in \Cref{why our methods dont extend to IDP polytopes}. 

\begin{lemma}
\label{B^2 in terms of E and B}
For any integer vector $\bm{b} \in \Z^m$, we have: 

\[
B_{(\bm{k}_1,\bm{k}_2)}(\bm{b})^2 = \sum_{\substack{\bm a \in \Z^n\\\bm{0} \le \bm{a} \le \bm{k}_2 - \bm{k}_1, \\  E(\bm{a}) > 0}} E(\bm{a}) B_{(\bm{k}_1, \bm{k}_2 - \bm{a})}\left(\bm{b} - \frac{M\bm{a}}{2}\right).
\]
\end{lemma}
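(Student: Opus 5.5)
The identity should follow from a direct bijection. It needs only the definitions and no unimodularity: the left side counts ordered pairs $(\bm y,\bm z)$ of integer points of $\mathcal{B}_{(\bm k_1,\bm k_2)}(\bm b)$. The right side counts triples $(\bm a,\bm e,\bm u)$ with $\bm e\in\mathcal{E}(\bm a)$ and $\bm u$ an integer point of $\mathcal{B}_{(\bm k_1,\bm k_2-\bm a)}(\bm b-\frac{M\bm a}{2})$, summed over the indicated $\bm a$. I will construct maps in both directions and check that they are mutually inverse.

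\textbf{Forward map.} Given a pair $(\bm y,\bm z)$, set $\bm u=\min(\bm y,\bm z)$ and $\bm a=|\bm y-\bm z|$ entrywise, and $\bm e=\bm y-\bm u$. Then $\bm a\in\Z^n_{\ge 0}$ and $\bm z-\bm u=\bm a-\bm e$. In each coordinate exactly one of $\bm y,\bm z$ attains the minimum, so $e_i\in\{0,a_i\}$.

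Since $M\bm y=M\bm z=\bm b$, we get $M\bm e=M(\bm a-\bm e)$. Adding these gives $2M\bm e=M\bm a$, so $\bm e\in\mathcal{E}(\bm a)$, and in particular $E(\bm a)>0$. Also $M\bm u=M\bm y-M\bm e=\bm b-\frac{M\bm a}{2}$. The bounds hold because $\bm u\ge\bm k_1$ (as $\bm y,\bm z\ge\bm k_1$) and $\bm u+\bm a=\max(\bm y,\bm z)\le\bm k_2$. The latter gives $\bm u\le\bm k_2-\bm a$ and, combined with $\bm u\ge\bm k_1$, also $\bm a\le\bm k_2-\bm k_1$. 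So $\bm a$ lies in the index set of the sum and $\bm u$ is counted by $B_{(\bm k_1,\bm k_2-\bm a)}(\bm b-\frac{M\bm a}{2})$.

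\textbf{Inverse map.} Given $(\bm a,\bm e,\bm u)$ as above, set $\bm y=\bm u+\bm e$ and $\bm z=\bm u+\bm a-\bm e$. Both are integral. Both satisfy $M\bm y=M\bm z=\bm b-\frac{M\bm a}{2}+\frac{M\bm a}{2}=\bm b$, using $M\bm e=\frac{M\bm a}{2}=M(\bm a-\bm e)$. Both satisfy $\bm k_1\le\bm u\le\bm y,\bm z\le\bm u+\bm a\le\bm k_2$.

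Because each $e_i\in\{0,a_i\}$, in every coordinate one of $y_i,z_i$ equals $u_i$ and the other equals $u_i+a_i$. Hence $\min(\bm y,\bm z)=\bm u$, $|\bm y-\bm z|=\bm a$, and $\bm y-\bm u=\bm e$. So the two maps are mutually inverse.

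\textbf{Conclusion and the delicate step.} Counting the triples, for each fixed $\bm a$ there are $E(\bm a)\cdot B_{(\bm k_1,\bm k_2-\bm a)}(\bm b-\frac{M\bm a}{2})$ of them, which gives the identity. The only point needing care is that the coordinatewise minimum recovers $\bm u$ exactly, including coordinates with $a_i=0$, where $y_i=z_i=u_i$ and $e_i=0$ is forced. This is what makes the decomposition unique. The rest is bookkeeping of the box constraints.

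Note that $\frac{M\bm a}{2}$ need not be integral for a general $\bm a$. However, whenever $E(\bm a)>0$ it equals $M\bm e$ for some integral $\bm e$, so $\bm b-\frac{M\bm a}{2}$ is an integer vector in every term that appears.
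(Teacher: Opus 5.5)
Your proposal is correct and is essentially the paper's proof: the paper uses the same map $(\bm{x},\bm{y})\mapsto(|\bm{x}-\bm{y}|,\,(\bm{x}-\bm{y})_+,\,\min(\bm{x},\bm{y}))$, and your $\bm{e}=\bm{y}-\min(\bm{y},\bm{z})$ is exactly $(\bm{y}-\bm{z})_+$. Your explicit inverse map and the check that it lands in the box and recovers the triple also make the surjectivity step explicit, which the paper only asserts.
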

\begin{proof}
    It suffices to find a bijection:
    \[
    \left[\mathcal{B}_{(\bm{k}_1, \bm{k}_2)}(\bm{b}) \times \mathcal{B}_{(\bm{k}_1, \bm{k}_2)}(\bm{b}) \right] \cap \Z^{2n}  \overset{\varphi}{\longrightarrow} \left\{ (\bm{a}, \bm{p}, \bm{m}) : \substack{\bm{0} \le \bm{a} \le \bm{k}_2 - \bm{k}_1, \, \bm a \in \Z^n \\ \bm{p} \in \mathcal{E}(\bm{a}) \\  \bm{m} \in \mathcal{B}_{(\bm{k}_1, \bm{k}_2 - \bm{a})}\left(\bm{b} - \frac{M\bm{a}}{2}\right) \cap \Z^n} \right\} .
    \]
    For any $\bm{x,y} \in \mathcal{B}_{(\bm{k}_1, \bm{k}_2)}(\bm{b}) \cap \Z^n$, define: 
    \[
    \bm{a} = |\bm{x} - \bm{y}|, \quad \bm{p} = (\bm{x}- \bm{y})_+, \quad \bm{m} = \min (\bm{x},\bm{y})
    \]
    where for any number $r \in \R$, we define $(r)_+ = 0$ if $r < 0$ and $(r)_+ = r$ otherwise. The following facts are easy to check: 
    \begin{enumerate}
        \item $\bm{x} = \bm{m} + \bm{p}$,
        \item $\bm{y} = \bm{m} + \bm{a} - \bm{p}$,
        \item $2\bm{p} = \bm{a} + \bm{x} - \bm{y}$ so $M \bm{p} = \frac{M\bm{a}}{2}$ and further $p_i \in \{0, a_i\}$, thus $\bm{p} \in \mathcal{E}(\bm{a})$,
        \item $2 \bm{m} = \bm{x} + \bm{y} - \bm{a}$, so $M \bm{m}  = \bm{b} - \frac{M\bm{a}}{2}$. Moreover $\bm{k}_1 \le \bm{m} \le \bm{k}_2 - \bm{a}$. Thus $\bm{m} \in \mathcal{B}_{(\bm{k}_1, \bm{k}_2 - \bm{a}) } \left(\bm{b} - \frac{M\bm{a}}{2} \right) \cap \Z^n$.
     \end{enumerate}
     Let:
    $
     \varphi(\bm{x}, \bm{y}) = (\bm{a}, \bm{p}, \bm{m}).
     $
     Then $\varphi$ is a bijection between the desired sets by 1 through 4.
\end{proof}
\begin{lemma}
\label{midpoint is heavy}
    Given integer vectors $\bm{k}_1 \le \bm{k}_2$ and $\bm{b}$ with proper dimensions, define $\bm{k}:= \bm{k}_2 + \bm{k}_1$. If $M\bm{k}$ is even, then we have: 
    \[
    B_{(\bm{k}_1, \bm{k}_2)}(\bm{b}) \le B_{\bm{(\bm{k}_1, \bm{k}_2)}}\left(\frac{M \bm{k}}{2}\right).
    \]
\end{lemma}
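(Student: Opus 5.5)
We plan to compare $B_{(\bm{k}_1,\bm{k}_2)}(\bm{b})^2$ with $B_{(\bm{k}_1,\bm{k}_2)}(M\bm{k}/2)^2$ via a pair decomposition like the one in \Cref{B^2 in terms of E and B}. The key ingredient is the reflection $\bm{y}\mapsto \bm{k}-\bm{y}$. It maps $\mathcal{B}_{(\bm{k}_1,\bm{k}_2)}(\bm{b})\cap\Z^n$ bijectively onto $\mathcal{B}_{(\bm{k}_1,\bm{k}_2)}(M\bm{k}-\bm{b})\cap\Z^n$. Hence
\[
B_{(\bm{k}_1,\bm{k}_2)}(\bm{b})^2=B_{(\bm{k}_1,\bm{k}_2)}(\bm{b})\,B_{(\bm{k}_1,\bm{k}_2)}(M\bm{k}-\bm{b}).
\]
Note that the two right-hand sides $\bm{b}$ and $M\bm{k}-\bm{b}$ have midpoint exactly $M\bm{k}/2$.

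Next we decompose the pairs. Take $\bm{x}\in\mathcal{B}(\bm{b})$ and $\bm{y}\in\mathcal{B}(M\bm{k}-\bm{b})$, and set $\bm{a}=|\bm{x}-\bm{y}|$, $\bm{m}=\min(\bm{x},\bm{y})$ and $\bm{p}=(\bm{x}-\bm{y})_+$. Exactly as in the proof of \Cref{B^2 in terms of E and B}:
\begin{itemize}
\item $\bm{x}=\bm{m}+\bm{p}$ and $\bm{y}=\bm{m}+\bm{a}-\bm{p}$;
\item $p_i\in\{0,a_i\}$ and $\bm{k}_1\le \bm{m}\le \bm{k}_2-\bm{a}$;
\item $2M\bm{m}+M\bm{a}=M\bm{k}$.
\end{itemize}
The only constraint linking $\bm{p}$ to the data is $M\bm{p}=\bm{b}-M\bm{m}$. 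Conversely, any such triple gives a valid pair. Define
\[
N_{\bm{a}}(\bm{t}):=\#\{\bm{p}: p_i\in\{0,a_i\},\ M\bm{p}=\bm{t}\}.
\]
Then
\[
B(\bm{b})B(M\bm{k}-\bm{b})=\sum_{(\bm{a},\bm{m})} N_{\bm{a}}(\bm{b}-M\bm{m}),
\]
where the sum runs over all $(\bm{a},\bm{m})$ with $\bm{k}_1\le\bm{m}\le\bm{k}_2-\bm{a}$ and $2M\bm{m}+M\bm{a}=M\bm{k}$. The same decomposition applied to $B(M\bm{k}/2)^2$ gives the same sum over the same index set, with summand $N_{\bm{a}}(M\bm{k}/2-M\bm{m})=N_{\bm{a}}(M\bm{a}/2)=E(\bm{a})$. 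So the lemma reduces to the termwise inequality
\[
N_{\bm{a}}(\bm{t})\le N_{\bm{a}}(M\bm{a}/2)\qquad\text{for every }\bm{t}.
\]

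To prove this inequality we pass to signs. Write $\bm{p}=(\bm{a}+\bm{a}\circ\bm{\epsilon})/2$ with $\bm{\epsilon}\in\{\pm1\}^S$, where $S=\supp(\bm{a})$. Then $N_{\bm{a}}(\bm{t})$ counts sign vectors with $\sum_{i\in S}\epsilon_i a_iM_i=\bm{v}$, where $\bm{v}=2\bm{t}-M\bm{a}$; call this count $N_S(\bm{v})$. We must show $N_S(\bm{v})\le N_S(\bm{0})$, by induction on $|S|$. Negation gives $N_S(\bm{v})=N_S(-\bm{v})$, so $N_S(\bm{v})^2=N_S(\bm{v})N_S(-\bm{v})$. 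Split a pair $(\bm{\epsilon},\bm{\epsilon}')$ according to the set $A$ where $\bm{\epsilon}=\bm{\epsilon}'$ and its complement $D=S\setminus A$, where $\bm{\epsilon}=-\bm{\epsilon}'$. Adding and subtracting the two equations gives
\[
N_S(\bm{v})N_S(-\bm{v})=\sum_{A\subseteq S}N_A(\bm{0})\,N_D(\bm{v}),\qquad N_S(\bm{0})^2=\sum_{A\subseteq S}N_A(\bm{0})\,N_D(\bm{0}).
\]
For every $A\neq\emptyset$ we have $|D|<|S|$, so induction gives $N_D(\bm{v})\le N_D(\bm{0})$. The $A=\emptyset$ term is $N_S(\bm{v})$ in the first sum and $N_S(\bm{0})$ in the second. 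Subtracting these terms yields $N_S(\bm{v})^2-N_S(\bm{v})\le N_S(\bm{0})^2-N_S(\bm{0})$. Since $x\mapsto x^2-x$ is increasing on nonnegative integers, this gives $N_S(\bm{v})\le N_S(\bm{0})$. The base case $S=\emptyset$ is trivial.

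The main obstacle I expect is bookkeeping rather than substance. The key points are checking that the index sets of $(\bm{a},\bm{m})$ in the two decompositions coincide, including parity of $M\bm{a}$ (which is automatic from $2M\bm{m}+M\bm{a}=M\bm{k}$), and checking that the termwise comparison is legitimate. The argument appears not to need unimodularity, which is reassuring: the lemma is purely combinatorial, and unimodularity should enter only later, via \Cref{unimodular polytopes have integer vertices}.
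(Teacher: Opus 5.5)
\textbf{There is a genuine gap, at the last step of your sign-vector induction.}

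The map $x\mapsto x^2-x$ is not injective on $\Z_{\ge 0}$, since $0^2-0=1^2-1$. From $N_S(\bm v)^2-N_S(\bm v)\le N_S(\bm 0)^2-N_S(\bm 0)$ you can conclude $N_S(\bm v)\le N_S(\bm 0)$ only when $N_S(\bm 0)\ge 1$, and $N_S(\bm 0)$ can vanish. For instance, if $|S|=1$ and $a_1M_1\neq\bm 0$, then $N_S(\bm 0)=0$ while $N_S(a_1M_1)=1$.

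So the termwise inequality $N_{\bm a}(\bm t)\le E(\bm a)$ you reduced to is false, even for TU $M$ and even on your index set. Take $M=(1)$, $\bm k_1=0$, $\bm k_2=2$, $\bm b=0$. The index set is $\{(\bm a,\bm m)\}=\{(0,1),(2,0)\}$. The term $(2,0)$ gives $N_2(0)=1>0=N_2(1)=E(2)$. The lemma survives in this example only because the term $(0,1)$ contributes $0$ on the $\bm b$ side versus $1$ on the midpoint side.

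Your remark that unimodularity is not needed should have been a warning sign: the lemma is false without it. Take $M=(2)$, $\bm k_1=0$, $\bm k_2=1$, $\bm b=0$; then $B_{(0,1)}(0)=1>0=B_{(0,1)}(1)$. Any correct proof must use unimodularity somewhere.

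\textbf{How the paper closes this.} It uses the same-fiber decomposition of \Cref{B^2 in terms of E and B} for both $B(\bm b)^2$ and $B(M\bm k/2)^2$. The weights $E(\bm a)$ are then identical on both sides. What varies is the box count $B_{(\bm k_1,\bm k_2-\bm a)}(\cdot)$, whose midpoint right-hand side is $M(\bm k-\bm a)/2$; this is integral because $E(\bm a)>0$ forces $M\bm a$ to be even. Hence induction on $|\bm k_2-\bm k_1|$ applies termwise.

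Your decomposition is the dual split. Regrouping it by $\bm a$ gives
\[
\sum_{\bm a}N_{\bm a}\bigl(\bm b-M(\bm k-\bm a)/2\bigr)\,B_{(\bm k_1,\bm k_2-\bm a)}\bigl(M(\bm k-\bm a)/2\bigr),
\]
in which the varying factor is the sign count. That factor cannot be compared termwise, as the example above shows.

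The paper meets the same $0$-versus-$1$ degeneracy of $x^2-x$ and closes it with unimodularity. Since $\bm k/2\in\mathcal B_{(\bm k_1,\bm k_2)}(M\bm k/2)$, this polytope is nonempty. By \Cref{unimodular polytopes have integer vertices} it has an integral vertex, so $B_{(\bm k_1,\bm k_2)}(M\bm k/2)\ge 1$. That is exactly the ingredient your argument lacks.
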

\begin{proof}
    Proof is done by induction over $|\bm{k}_2 - \bm{k}_1|$. Suppose $|\bm{k}_2 - \bm{k}_1| = 0$, i.e. $\bm{k}_2 = \bm{k}_1$. Then $B_{(\bm{k}_1, \bm{k}_2)}(\frac{M\bm{k}}{2}) = 1$, and $B_{(\bm{k}_1, \bm{k}_2)}(\bm{b})$ is either zero or one, depending on whether $M \bm{k}_1 = \bm{b}$ or not. Thus the base case holds. 

    By \Cref{B^2 in terms of E and B}, write: 
    \[
    B_{(\bm{k}_1,\bm{k}_2)}(\bm{b})^2 = \sum_{\substack{\bm a \in \Z^n\\\bm{0} \le \bm{a} \le \bm{k}_2- \bm{k}_1\\ E(\bm{a}) > 0}} E(\bm{a}) B_{(\bm{k}_1, \bm{k}_2 - \bm{a})}\left(\bm{b} - \frac{M\bm{a}}{2}\right),
    \]
    and:
    \[
    B_{(\bm{k}_1,\bm{k}_2)}\left(\frac{M\bm{k}}{2}\right)^2 = \sum_{\substack{\bm a \in \Z^n\\\bm{0} \le \bm{a} \le \bm{k}_2- \bm{k}_1\\ E(\bm{a}) > 0}} E(\bm{a}) B_{(\bm{k}_1, \bm{k}_2 - \bm{a})}\left( \frac{M(\bm{k - a})}{2}\right).
    \]
    The term $\bm{a} = \bm{0}$ contributes $B_{(\bm{k}_1,\bm{k}_2)}(\bm{b})$ and $B_{(\bm{k}_1,\bm{k}_2)}\left(\frac{M\bm{k}}{2}\right)$ to the right hand sides of the first and the second identity respectively. By moving these terms to the left hand side, we get: 
    \[
    B_{(\bm{k}_1,\bm{k}_2)}(\bm{b})^2 - B_{(\bm{k}_1,\bm{k}_2)}(\bm{b}) = \sum_{\substack{\bm a \in \Z^n,\, \bm a \neq \bm 0\\\bm{0} \le \bm{a} \le \bm{k}_2- \bm{k}_1\\ E(\bm{a}) > 0}} E(\bm{a}) B_{(\bm{k}_1, \bm{k}_2 - \bm{a})}\left(\bm{b} - \frac{M\bm{a}}{2}\right),
    \]
    and: 
    \[ B_{(\bm{k}_1,\bm{k}_2)}\left(\frac{M\bm{k}}{2}\right)^2 -B_{(\bm{k}_1,\bm{k}_2)}\left(\frac{M\bm{k}}{2}\right) = \sum_{\substack{\bm a \in \Z^n,\, \bm a \neq \bm 0\\\bm{0} \le \bm{a} \le \bm{k}_2- \bm{k}_1\\ E(\bm{a}) > 0}} E(\bm{a}) B_{(\bm{k}_1, \bm{k}_2 - \bm{a})}\left( \frac{M(\bm{k - a})}{2}\right).
    \]
    By induction hypothesis, for any $\bm{0} \le \bm{a} \le \bm{k}_2 - \bm{k}_1, \bm{a} \neq \bm{0}$:
    \[
    B_{(\bm{k}_1, \bm{k}_2 - \bm{a}) }\left( \bm{b} - \frac{M\bm{a}}{2}\right) \le B_{(\bm{k}_1 , \bm{k}_2 - \bm{a})} \left( \frac{M(\bm{k} - \bm{a}) }{2}\right),
    \]
    implying that:
    \[
    B_{(\bm{k}_1,\bm{k}_2)}\left(\frac{M\bm{k}}{2}\right)^2 -B_{(\bm{k}_1,\bm{k}_2)}\left(\frac{M\bm{k}}{2}\right) \ge B_{(\bm{k}_1,\bm{k}_2)}(\bm{b})^2 - B_{(\bm{k}_1,\bm{k}_2)}(\bm{b}).
    \]
    This conclusion completes the proof except for the case that $B_{(\bm{k}_1,\bm{k}_2)}\left(\frac{M\bm{k}}{2}\right) = 0$ and  $B_{(\bm{k}_1,\bm{k}_2)}(\bm{b}) = 1$. However, $\mathcal{B}_{(\bm{k}_1,\bm{k}_2)}\left(\frac{M\bm{k}}{2}\right)$ is a nonempty polytope since $\frac{\bm{k}}{2} \in \mathcal{B}_{(\bm{k}_1, \bm{k}_2)} (\frac{M\bm{k}}{2})$, and therefore has at least one vertex which is integral by \Cref{unimodular polytopes have integer vertices}. Thus $B_{(\bm{k}_1,\bm{k}_2)}(\frac{M\bm{k}}{2}) \ge 1$ for any $\bm{k}$, fixing the issue.                  
\end{proof}

\begin{lemma}
\label{unimodular polytopes have integer vertices}
    Suppose $M \in \Z^{m \times n}$ is a unimodular matrix and $\bm b \in \Z^m, \bm k_1, \bm k_2 \in \Z^n$. If the polytope $\mathcal{B}_{(\bm k_1, \bm k_2)}(\bm b)= \{ \bm{y}: M \bm y = \bm b, \bm k_1 \le \bm y \le \bm k_2\}$ is nonempty, then all of its vertices are integral. 
\end{lemma}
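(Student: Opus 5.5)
The plan is to reduce to the classical fact that vertices of a polyhedron are determined by a nonsingular subsystem of tight constraints, and then apply Cramer's rule, using unimodularity to control the determinant.

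\textbf{Step 1: reduce to full row rank.} Let $r = \rank(M)$. Since the polytope is nonempty, $\bm b$ lies in the column space of $M$. Choose $r$ linearly independent rows of $M$, forming an $r\times n$ submatrix $M'$, with corresponding subvector $\bm b'$. Then $\{M\bm y=\bm b\} = \{M'\bm y = \bm b'\}$, because the remaining equations are rational combinations of these and are consistent. Every nonsingular $r\times r$ submatrix of $M'$ is also an $r \times r$ submatrix of $M$, so it has determinant $\pm1$.

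\textbf{Step 2: describe a vertex by its tight constraints.} Let $\bm v$ be a vertex of $\{\bm y : M'\bm y=\bm b',\ \bm k_1\le \bm y\le \bm k_2\}$. Let $N=\{i : v_i\in\{(k_1)_i,(k_2)_i\}\}$ be the set of tight box coordinates, and let $F=[n]\setminus N$. Because $\bm v$ is a vertex, it is the unique solution of the equalities $M'\bm y=\bm b'$ together with $y_i=v_i$ for $i\in N$. Equivalently, the columns $M'_F$ are linearly independent; otherwise a nonzero kernel direction supported on $F$ would let us move $\bm v$ in both directions while staying feasible, since the inequalities on $F$ are strict. Hence $|F|\le r$, and since $\rank(M')=r$ we can extend $F$ to a set $F'\supseteq F$ of size $r$ such that $M'_{F'}$ is an $r\times r$ nonsingular submatrix.

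\textbf{Step 3: apply Cramer's rule.} The coordinates of $\bm v$ in $[n]\setminus F'\subseteq N$ equal entries of $\bm k_1$ or $\bm k_2$, so they are integers. The coordinates in $F'$ solve
\[
M'_{F'}\bm v_{F'} = \bm b' - \sum_{i\notin F'} v_i M'_i ,
\]
whose right-hand side is an integer vector. By Cramer's rule and $\det M'_{F'}=\pm1$, the vector $\bm v_{F'}$ is integral.

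The main subtlety is Step 1. Unimodularity as defined in the paper only controls $\rank(M)\times\rank(M)$ minors, so we must pass to a row basis before applying Cramer's rule. The basis-extension argument in Step 2 is also needed, so that we invert a genuine $r\times r$ minor rather than the possibly non-square matrix $M'_F$.
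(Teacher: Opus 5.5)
Your proposal is correct and follows essentially the same basic-solution argument as the paper: pass to $r=\rank(M)$ independent rows, find a nonsingular $r\times r$ column submatrix whose complement consists of tight box coordinates, and invert it using $\det=\pm 1$. The only cosmetic difference is that you get this basis by extending the linearly independent free columns, whereas the paper picks $n-r$ active box constraints independent of the equalities and reads off $\det B\neq 0$ from the resulting block matrix.
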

\begin{proof}
    Let $r = \rank(M)$ and fix $\bm{y}^*$ to be a vertex of $\mathcal{B}_{(\bm k_1, \bm k_2)}(M, \bm b)$. We say that a constraint $k_{1,i} \le y_i$ or $y_i \le k_{2,i}$ is active at $\bm{y}^*$ if it holds with equality for $\bm{y}^*$. The equality constraints $M\bm{y}^* = \bm{b}$ are automatically active. 
    
    Without loss of generality, assume that the first $r$ rows of $M$ are linearly independent, and let $M_{[r]}$ be the submatrix of $M$ containing these $r$ rows, and let $\bm b_{[r]}$ be the vector $\bm{b}$ restricted to its first $r$ entries.
    
    Note that the normal vectors of the active constraints of $\mathcal{B}_{\bm k_1, \bm k_2}(\bm b)$ at $\bm{y}^*$ must span $\R^n$. Otherwise there will exist some $\bm{v} \in \R^n$ orthogonal to all of the active constraints. Therefore, any point $\bm{y}^* \pm \epsilon \bm{v}$ belongs to $P$ for small enough $\epsilon \in \R_{\ge 0}$, contradicting the assumption that $\bm{y}^*$ is a vertex of $M$. 

    So in addition to the $r$ constraints of $M_{[r]}$ that are automatically active at $\bm{y}^*$, there are (at least) $n-r$ constraints of $\bm{k}_1 \le \bm y \le \bm k_2$ that are active at $\bm{y}^*$ and are linearly independent together with the equality constraints of $M_{[r]}$. After permuting the columns of $M$, assume that $y_i = d_i$ for $i\in [n-r]$ where $d_i \in \{k_{1,i}, k_{2,i} \}$.  For simplicity, let $A$ be the submatrix of the first $n-r$ columns of $M_{[r]}$ and $B$ its last $r$ columns, so that $M_{[r]} = \begin{bmatrix}
        A & B
    \end{bmatrix}$. We know that $\bm{y}^*$ satisfies: 
    \[
    \begin{bmatrix}
       A & B \\ \mathrm{id}_{(n-r) \times (n-r)} & 0_{(n-r) \times r} 
    \end{bmatrix} \bm{y}^* = \begin{bmatrix}
        \bm b_{[r]} \\ \bm d
    \end{bmatrix}, \det \begin{bmatrix}
       A & B \\ \mathrm{id}_{(n-r) \times (n-r)} & 0_{(n-r) \times r} 
    \end{bmatrix}\neq 0,
    \]
  In particular, we must have $\det{B} \neq 0$. Since $B$ is a non-singular $r \times r$ submatrix of $M$, its determinant is either $+1$ or $-1$ and $B^{-1}$ is an integer matrix. Thus:
  \[
  {y}^*_j = \left[ B^{-1}(\bm{b}_{[r]} - A \bm{d}) \right]_j \in \Z, \quad \forall j \in \{ n-r + 1, \cdots, n\}
  \]
  and $\bm{y}^*$ is an integer point. 
\end{proof}

We now have all the tools to prove \Cref{thm- Midpoint log-concavity for TU matrices}. 

\begin{proof}[{Proof of \Cref{thm- Midpoint log-concavity for TU matrices}}]
    Expand $T(\bm{b}, \bm{k}_1, \bm{k}_2; \bm{w})^2 - T(\bm{b^+}, \bm{k}_1, \bm{k}_2; \bm{w}) \cdot T(\bm{b^-}, \bm{k}_1, \bm{k}_2; \bm{w})$:
    \begin{align*}
        T(\bm{b}, \bm{k}_1, \bm{k}_2; \bm{w})^2 &= \sum_{2 \bm{k}_1 \le \bm{\alpha} \le 2 \bm{k}_2} \left( \sum_{\substack{\bm{k}_1 \le \bm{x,y} \le \bm{k}_2 \\ M\bm{x} = M \bm{y} = \bm{b}\\ \bm{x}+ \bm{y} = \bm{\alpha}}}1 \right) \bm{w^{\bm{\alpha}}}\\&= \sum_{\substack{2\bm{k}_1 \le \bm{\alpha} \le 2\bm{k}_2 \\ M\bm{\alpha} = 2 \bm{b}}} \left( \sum_{\substack{\bm{k}_1 \le \bm{x} \le \bm{k}_2 \\ \bm{k_1} \le \bm{\alpha} - \bm{x} \le \bm{k}_2 \\ M \bm{x} = \bm{b}}}  1 \right) \bm{w^{\alpha}}\\& = \sum_{\substack{2\bm{k}_1 \le \bm{\alpha} \le 2\bm{k}_2 \\ M\bm{\alpha} = 2 \bm{b}}} B_{(\max \{ \bm{k}_1, \bm{\alpha} - \bm{k}_2\} , \min\{ \bm{k}_2, \bm{\alpha} - \bm{k}_1\})}(\bm{b}) \bm{w^{\alpha}}
    ,
    \end{align*}
    and: 
    \begin{align*}
    T(\bm{b^+}, \bm{k}_1, \bm{k}_2; \bm{w}) \cdot T(\bm{b^-}, \bm{k}_1, \bm{k}_2; \bm{w}) &= \sum_{2 \bm{k}_1 \le \bm{\alpha} \le 2 \bm{k}_2 } \left( \sum_{\substack{\bm{k}_1 \le \bm{x,y} \le \bm{k}_2\\ M\bm{x} = b^+, M\bm{y} = b^-\\ \bm{x} + \bm{y} = \bm{\alpha}}}1 \right) \bm{w^\alpha} \\&= \sum_{\substack{ 2 \bm{k}_1 \le \bm{\alpha} \le 2 \bm{k}_2 \\ M \bm{\alpha} = 2 \bm{b}}} B_{(\max \{ \bm{k}_1, \bm{\alpha} - \bm{k}_2\} , \min\{ \bm{k}_2, \bm{\alpha} - \bm{k}_1\})}(\bm{b}^+) \bm{w^{\alpha}}. 
    \end{align*}
    where the $\min, \max$ are taken coordinate-wise. 
    For each $i$, if $\alpha_i \le k_{1,i} + k_{2,i}$ then the $i$-th coordinate of $\max \{ \bm{k}_1, \bm{\alpha} - \bm{k}_2\} $ is $k_{1,i}$ and the $i$-th coordinate of $\min\{ \bm{k}_2, \bm{\alpha} - \bm{k}_1\}$ is $\alpha_i - k_{1,i}$. If $\alpha_i \ge k_{1,i} + k_{2,i}$ then the $i$-th coordinate of $\max \{ \bm{k}_1, \bm{\alpha} - \bm{k}_2\} $ is $\alpha_i - k_{2,i}$ and the $i$-th coordinate of $\min\{ \bm{k}_2, \bm{\alpha} - \bm{k}_1\}$ is $k_{2,i}$. Thus $\max \{ \bm{k}_1, \bm{\alpha} - \bm{k}_2\} + \min\{ \bm{k}_2, \bm{\alpha} - \bm{k}_1\} = \bm{\alpha}$.
    
    it follows from \Cref{midpoint is heavy} that all the coefficients of $T(\bm{b}, \bm{k}_1, \bm{k}_2; \bm{w})^2 - T(\bm{b^+}, \bm{k}_1, \bm{k}_2; \bm{w}) \cdot T(\bm{b^-}, \bm{k}_1, \bm{k}_2; \bm{w})$ are nonnegative. 
\end{proof}

\begin{corollary}
\label{Ehrhart polynomials of unimodular polytopes are log-concave}
    Let $M \in \Z^{m \times n}$ be a unimodular matrix and $\bm b \in \Z^m$. Suppose $P = \{\bm{y} \in \R_{\ge 0}^n: M \bm{y} = \bm{b}\}$ is a bounded polytope. Then $E_{P}(t)^2 \ge E_{P}(t-1)\cdot E_{P}(t+1)$ for all $ t \in \Z_{\ge 1}$. 
\end{corollary}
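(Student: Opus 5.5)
The plan is to identify the dilates of $P$ with fibers of the same unimodular matrix and then apply \Cref{thm- Midpoint log-concavity for TU matrices} with $\bm w = \bm 1$. For $t \in \Z_{\ge 0}$ we have $tP = \{\bm y \in \R_{\ge 0}^n : M\bm y = t\bm b\}$. Hence
\[
E_P(t) = \#\{\bm y \in \Z_{\ge 0}^n : M\bm y = t\bm b\}.
\]
By \Cref{unimodular polytopes have integer vertices}, $P$ is a lattice polytope whenever it is nonempty, so $E_P$ is a genuine Ehrhart polynomial. If $P$ is empty, all evaluations vanish for $t\ge 1$ and the inequality is trivial; $E_P(0)=1$ then causes no issue.

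The next step is to reduce to a bounded box so the finite theorem applies. Since $P$ is bounded, $tP$ is bounded for every $t$. Fix $t \ge 1$ and choose an integer $K$ such that every lattice point of $(t-1)P$, $tP$ and $(t+1)P$ has all coordinates at most $K$; for instance, take $K \ge (t+1)\max_{\bm y\in P}\|\bm y\|_\infty$, using $sP$ for $s\le t+1$. Set $\bm k_1 = \bm 0$ and $\bm k_2 = K\bm 1$. Then
\[
E_P(s) = B_{(\bm 0, K\bm 1)}(M, s\bm b) = T(s\bm b, \bm 0, K\bm 1; \bm 1) \quad \text{for } s \in \{t-1,t,t+1\}.
\]

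Now take $\bm b^- = (t-1)\bm b$ and $\bm b^+ = (t+1)\bm b$. These are integer vectors with midpoint $t\bm b$. \Cref{thm- Midpoint log-concavity for TU matrices} shows that $T(t\bm b)^2 - T(\bm b^+)T(\bm b^-)$ is a Laurent polynomial in $\bm w$ with nonnegative coefficients. Evaluating at $\bm w = \bm 1$ gives $E_P(t)^2 - E_P(t+1)E_P(t-1) \ge 0$.

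No step is a real obstacle. The only care needed is the uniform box bound, which requires boundedness of $P$: it guarantees that the truncation $\bm y \le K\bm 1$ loses no lattice points for the three relevant dilates. The case $t=1$ also needs checking, since then $\bm b^- = \bm 0$. Here $0P=\{\bm y\ge 0: M\bm y=\bm 0\}$ is bounded because it is the recession cone of $P$, so it equals $\{\bm 0\}$ and $E_P(0)=1$, consistent with the Ehrhart convention.
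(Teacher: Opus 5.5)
Your proposal is correct and follows the paper's argument: apply the midpoint log-concavity theorem with $\bm b^{\pm} = (t\pm 1)\bm b$, the box $[\bm 0, \bm k]$, and $\bm w = \bm 1$. The only difference is that the paper lets $\bm k \to \infty$ coordinatewise, while you fix one box $K\bm 1$ large enough to contain all lattice points of the three dilates; this is equivalent because boundedness makes the truncated counts stabilize, and your version, together with your checks of the empty case, the $t=1$ case, and integrality of $P$, is slightly more explicit.
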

\begin{proof}
By \Cref{thm- Midpoint log-concavity for TU matrices}:
\[
T(t\bm{b}, \bm 0, \bm k; \bm 1)^2 \ge T((t-1)\bm{b}, \bm 0, \bm k; \bm 1) \cdot T\left((t+1)\bm{b}, \bm 0, \bm k; \bm 1\right)
\]
As $\bm{k} $ tends to $ \infty$ coordinate-wise we get: 
\[
\#\{\bm{y} \in \Z_{\ge 0}^n: M \bm{y} = t \bm b \}^2 \ge \#\{\bm{y} \in \Z_{\ge 0}^n: M \bm{y} = (t-1) \bm b \} \cdot \#\{\bm{y} \in \Z_{\ge 0}^n: M \bm{y} = (t+1) \bm b \}.
\]
\end{proof}

\begin{remark}
\label{why our methods dont extend to IDP polytopes}
    We remark that if $M$ is unimodular and $\bm b \in \Z^m, \bm k_1, \bm k_2 \in \Z^n$ are given, the polytope $\mathcal{B}_{(\bm k_1, \bm k_2)}(M, \bm b)$ is indeed IDP. However, not all IDP polytopes are obtained this way. Furthermore, \Cref{unimodular polytopes have integer vertices} is not generally correct for an arbitrary IDP polytope. In particular, with the added assumption that $M$ has full row rank, one can show that $M$ is unimodular iff $\mathcal{B}_{(\bm{k}_1, \bm{k}_2)}(\bm b)$ has integral vertices for any choice of $\bm b, \bm k_1, \bm k_2$. Thus, the proof of \Cref{thm- Midpoint log-concavity for TU matrices} does not extend to IDP polytopes. In fact, \Cref{Ehrhart polynomials of unimodular polytopes are log-concave} fails for general IDP polytopes, see \cite{Fer26}.

\end{remark}
\subsection{Solution Laurent polynomials of TU matrices}
\label{subsection: solution Laurent polynomials are LC}
   Suppose $M \in \{0, \pm 1 \}^{m \times n}$ is a totally unimodular matrix. For any $\bm{b} \in \Z^{m}$, we want to find a lower bound for $\#\{ \bm{y} \in \Z_{\ge 0}^n: M \bm{y} = \bm{b} \} $. We encode these quantities as coefficients of a Laurent polynomial and prove that this Laurent polynomial is $\LC$. We will then use \Cref{capacity bound for LC polynomials} to obtain lower bounds for its coefficients in \Cref{section: capacity bounds}.
   

Suppose $\bm a_1, \cdots, \bm a_n \in \Z^m$ are the columns of $M$. For any integer vector $\bm{k} \in \Z_{\ge 0} ^n$, define the $\bm{k}$-solution Laurent polynomial of $M$ to be:
\[
P_{M,\bm{k}}(x_i : i \in [m]) = \prod_{j = 1}^n \left(\sum_{\ell =0}^{k_{j}}  \bm x^{\ell\bm a_j}\right) = \sum_{\bm{b} \in \Z^m} B_{(\bm{0}, \bm{k})}(\bm{b}) \bm{x^{b}},
\]
where $B_{(\bm{k}_1,\bm{k}_2)}(\bm{b}) =\# \{\bm{y} \in \Z^n: M \bm{y} = \bm{b} , \bm{k}_1 \le \bm{y} \le \bm{k}_2 \}$, as defined in \Cref{subsection-midpoint log-concavity for tu}.

The goal of this section is to prove that $P_{M, \bm{k}}$ is $\LC$ for any TU matrix $M$ and $\bm{k} \in \Z^n$. See \Cref{why pAB is not LC for unimodular matrices} on why we cannot prove a similar statement for unimodular matrices.

Let us first derive a simpler equivalent condition for $P_{M, \bm k}$ to be $\LC$. For the sake of simplicity, if $M$ is an $m \times n$ matrix and $S \subseteq [m]$, we denote by $M_S$ the submatrix of $M$ consisting of the rows indexed by $S$.
\begin{proposition} 
\label{when is PM in LC}
    Suppose $M$ is a TU matrix. Then $P_{M , \bm{k}}$ is $\LC$ if and only if the following condition holds: 
    
    Choose an arbitrary subset $S \subseteq [m]$ of the rows of $M$. Let $B = M_S$ and $A = M_{[m] \setminus S}$. Further choose an arbitrary $\bm{b} \in \Z^{|S|}$. Then any positive evaluation of the Laurent polynomial $\sum_{\substack{\bm{0} \le \bm{y} \le \bm{k} \\ B\bm{y} = b  }} \bm{x}^{A\bm{y}}$ in all but one variable has log-concave coefficients. 
\end{proposition}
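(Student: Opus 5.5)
The plan is to unwind the equivalent formulation of $\LC$ given in \Cref{definition: LC polynomials}: $p \in \LC_m$ if and only if for every $S \subseteq [m]$ and every $\bm{b} \in \Z^S$, every positive evaluation of $[\prod_{i \in S} x_i^{b_i}]\, p$ in all but one variable has log-concave coefficients. It then suffices to identify the coefficient extraction $[\bm{x}_S^{\bm{b}}] P_{M,\bm{k}}$ with the Laurent polynomial appearing in the condition.

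First, expand the product to write
\[
P_{M,\bm{k}}(\bm{x}) = \sum_{\bm{0} \le \bm{y} \le \bm{k}} \bm{x}^{M\bm{y}} = \sum_{\bm{0} \le \bm{y} \le \bm{k}} \bm{x}_S^{B\bm{y}}\, \bm{x}_{[m]\setminus S}^{A\bm{y}},
\]
with $B = M_S$ and $A = M_{[m]\setminus S}$. Extracting the coefficient of $\bm{x}_S^{\bm{b}}$ keeps exactly those $\bm{y}$ with $B\bm{y} = \bm{b}$. This gives
\[
[\bm{x}_S^{\bm{b}}] P_{M,\bm{k}} = \sum_{\substack{\bm{0} \le \bm{y} \le \bm{k} \\ B\bm{y} = \bm{b}}} \bm{x}^{A\bm{y}},
\]
which is a Laurent polynomial in the variables indexed by $[m]\setminus S$.

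Next, check the bookkeeping so the two conditions match exactly. Iterated single-variable extractions $[x_i^{\alpha_i}]$ commute and compose into one multi-index extraction over $S$. So the recursive definition of $\LC_m$, combined with the stated equivalence, amounts to the following: for all $S$ and $\bm{b}$, positive evaluations in all but one remaining variable have log-concave coefficients. The coefficients are nonnegative counts, so the nonnegativity requirement is automatic. Two degenerate cases need care.
\begin{itemize}
\item When $S = [m]$ there are no remaining variables. Here one should note the convention that $\LC_0$ (constants) is trivially satisfied, or equivalently that the condition is vacuous.
\item When $|[m]\setminus S| = 1$, the statement is just that the univariate polynomial has log-concave coefficients.
\end{itemize}
Both directions follow: "if" by applying the condition to each $S$, and "only if" by specializing the $\LC$ condition to each $S$.

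I expect no real obstacle. The only delicate point is to state clearly that the equivalent formulation in \Cref{definition: LC polynomials} is valid, including $S = \emptyset$ (plain evaluations) and the trivial cases above. Total unimodularity is not actually used here; it only matters later, when the condition is verified via \Cref{thm- Midpoint log-concavity for TU matrices}.
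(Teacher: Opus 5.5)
Your proposal is correct and matches the paper's proof: both use the equivalent form of the $\LC$ definition and identify $[\bm{x}_S^{\bm{b}}]P_{M,\bm{k}}$ with $\sum_{\bm{0}\le\bm{y}\le\bm{k},\ B\bm{y}=\bm{b}}\bm{x}^{A\bm{y}}$. Your handling of the degenerate cases ($S=\emptyset$, $S=[m]$, one remaining variable) and your remark that total unimodularity is not used here are accurate; the paper leaves both implicit.
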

\begin{proof}
By \Cref{definition: LC polynomials}, $P_{M,\bm{k}}$ is $\LC$ if for any $S \subseteq [m]$ and $\bm{b} \in \Z^{|S|}$, any positive evaluation of $[\bm{x}_S^{\bm{b}}]p_{M, \bm{k}}$ in all but one variable has log-concave coefficients. We can write: 
\begin{align*}
    [\bm{x}_S^{\bm{b}}]p_{M,\bm{k}}(\bm{x}) &= [\bm{x}_S^{\bm{b}}]\sum_{\bm{0} \le \bm{y} \le \bm{k}} \bm{x}^{M\bm{y}}\\
    &= \sum_{\substack{\bm{0} \le \bm{y} \le \bm{k}\\ B\bm{y} = \bm{b}}} \bm{x}^{A\bm{y}},
\end{align*}
    completing the proof. 
\end{proof}
\Cref{when is PM in LC} already reveals the connection between $\LC$ properties of $\bm{k}$-solution Laurent polynomials of TU matrices and midpoint log-concavity for TU polytopes. The following proposition completes this connection. 
\begin{proposition}
    \label{pAB is in LC}
    Suppose $\begin{bmatrix}
        A\\B
    \end{bmatrix}$ is a TU matrix and assume $A$ is an $m \times n$ matrix. Then any positive evaluation of $\sum_{\substack{\bm{0} \le \bm{y} \le \bm{k} \\ B\bm{y} = b  }} \bm{x}^{A\bm{y}}$ in all but one variable has log-concave coefficients. 
\end{proposition}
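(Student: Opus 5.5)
Fix the variable $x_i$ that remains free, and evaluate $x_j = r_j > 0$ for $j \neq i$. Write $\bm{r}$ for the vector with $r_i$ omitted, and let $A'$ be $A$ with row $i$ deleted. The coefficient of $x_i^t$ in the evaluated polynomial is then
\[
c_t = \sum_{\substack{\bm 0 \le \bm y \le \bm k \\ B\bm y = \bm b,\ (A\bm y)_i = t}} \bm{r}^{A'\bm y}.
\]
The plan is to recognise $c_t$ as a value of $T$ and apply \Cref{thm- Midpoint log-concavity for TU matrices}. Set
\[
\bm w = \left(\prod_{j \ne i} r_j^{A_{j\ell}}\right)_{\ell \in [n]} \in \R_{>0}^n ,
\]
so that $\bm r^{A'\bm y} = \bm w^{\bm y}$ for every $\bm y$. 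Now take the matrix $N = \begin{bmatrix} B \\ A_i \end{bmatrix}$, where $A_i$ is the $i$-th row of $A$. It is a row submatrix of the TU matrix $\begin{bmatrix} A \\ B\end{bmatrix}$, so it is TU, and in particular unimodular in the paper's sense: every nonsingular square submatrix of full rank size has determinant $\pm 1$. Then
\[
c_t = T\bigl(N, (\bm b, t), \bm 0, \bm k; \bm w\bigr).
\]
Applying \Cref{thm- Midpoint log-concavity for TU matrices} with $\bm b^\pm = (\bm b, t\pm 1)$, whose midpoint is $(\bm b, t)$, shows that $c_t^2 - c_{t-1}c_{t+1}$ is a Laurent polynomial in $\bm w$ with nonnegative coefficients. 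Evaluating at the positive vector $\bm w$ gives $c_t^2 \ge c_{t-1}c_{t+1}$.

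It remains to rule out internal zeros, and this is the step needing real care. Suppose $c_s > 0$ and $c_u > 0$ with $s < t < u$. Then there are integer points $\bm y_s, \bm y_u$ in the box $\bm 0 \le \bm y \le \bm k$ with $B\bm y = \bm b$ and $(A\bm y)_i$ equal to $s$ and $u$ respectively. A suitable convex combination $\bm z$ of these lies in the box, satisfies $B\bm z = \bm b$, and has $(A\bm z)_i = t$. So the polytope
\[
\{\bm y : N\bm y = (\bm b, t),\ \bm 0 \le \bm y \le \bm k\}
\]
is nonempty. By \Cref{unimodular polytopes have integer vertices} applied to the unimodular matrix $N$, it has an integral vertex. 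Since all weights $\bm w^{\bm y}$ are positive, this forces $c_t > 0$. Hence the sequence $(c_t)$ has no internal zeros.

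Together, the inequality $c_t^2 \ge c_{t-1}c_{t+1}$ and the absence of internal zeros show that the evaluated polynomial lies in $\LC_1$. This is the claimed statement.

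The main point to double-check is the unimodularity hypothesis used in both steps. It holds because $N$ is TU, and every TU matrix is unimodular: its nonsingular maximal-rank square submatrices have determinant $\pm1$. If $N$ has dependent rows, I would first delete redundant rows without changing the fibres. This is harmless because the proof of \Cref{unimodular polytopes have integer vertices} already works with a row basis.
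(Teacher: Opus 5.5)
Your proof is correct, and it takes a leaner route than the paper's. The paper lifts to pairs $(\bm y,\bm w)$ with slack variables $\bm w = A\bm y$ and confines them to a box $-\bm\gamma \le \bm w \le \bm\gamma$. It then applies \Cref{thm- Midpoint log-concavity for TU matrices} to the augmented matrix $\begin{bmatrix} B & 0 \\ A & -\mathrm{id} \\ e_{n+m}^T \end{bmatrix}$, with the weights $(1,\ldots,1,r_1,\ldots,r_{m-1},1)$ placed only on the slack coordinates. That route relies on the asserted fact that this augmented matrix is still TU. You instead absorb the evaluation into weights on $\bm y$ itself via $w_\ell = \prod_{j\neq i} r_j^{A_{j\ell}}$, which works because $\bm y \mapsto \bm w^{\bm y}$ is multiplicative and the theorem allows arbitrary positive weights. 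As a result only the row submatrix $\begin{bmatrix} B \\ A_i \end{bmatrix}$ enters, and its unimodularity is immediate from total unimodularity. This removes the auxiliary bound $\bm\gamma$ and the TU-preservation claim. It also isolates exactly what is needed: that $\begin{bmatrix} B \\ A_i \end{bmatrix}$ be unimodular for every row split. That gives a cleaner reading of \Cref{why pAB is not LC for unimodular matrices}: the obstruction is simply that row submatrices of unimodular matrices need not be unimodular. The paper's lifting gains nothing essential here. Your no-internal-zeros argument (convex combination, then an integral vertex via \Cref{unimodular polytopes have integer vertices}) matches the paper's. Your closing remark about deleting dependent rows is unnecessary. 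The paper's notion of unimodularity is rank-based, so \Cref{unimodular polytopes have integer vertices} and \Cref{thm- Midpoint log-concavity for TU matrices} already apply to rank-deficient matrices as they stand.
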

\begin{proof}
Without loss of generality, evaluate $\sum_{\substack{\bm{0} \le \bm{y} \le \bm{k} \\ B\bm{y} = b  }} \bm{x}^{A\bm{y}}$ in all but the last variable, and let $q(x)$ be the resulting Laurent polynomial. 
   It remains to check that $\{ [x^{\beta}]q \}_{\beta \in \Z}$ is a log-concave sequence.

    Let $\bm{a}_{(1)}, \bm{a}_{(2)}, \cdots, \bm{a}_{(m)}$ be the rows of $A$, and suppose we are evaluating $p$ at $y_1 = r_1, \cdots, y_{m-1} = r_{m-1}$. Then:
    \begin{align*}
        q(x) &= \sum_{\substack{ \bm{0} \le \bm{y} \le \bm{k}\\ B \bm{y} = \bm{b}}} r_1^{\bm{a}_{(1)}^T \bm{y}}\cdots r_{m-1}^{\bm{a}_{(m-1)}^T \bm{y}}x^{\bm{a}_{(m)}^T \bm{y}}\\
        &= \sum_{\substack{\bm{0} \le \bm{y} \le \bm{k}\\ B \bm{y} = b \\ A \bm{y} = \bm{w}}}r_1^{w_1} \cdots r_{m-1}^{w_{m-1}}x^{w_m}
    \end{align*}
Since the set $\{ \bm{y}: B \bm{y} = \bm{b}, \bm{0} \le \bm{y} \le \bm{k}\}$ is bounded, one can choose some integer vector $\bm{\gamma}$ such that $ - \bm{\gamma} \le \bm{w} = A \bm{y} \le \bm{\gamma}$ for any  $\bm{y}$ in this set. Then: 
\[
    q(x) = \sum_{\beta \in \Z} \left( \sum r_1^{w_1} \cdots r_{m-1}^{w_{m-1}}\right) x^{\beta},
\]
where the second sum is over the set: 
\[
T = \left\{ \begin{bmatrix}
    \bm{y} \\ \bm{w}
\end{bmatrix}: \begin{bmatrix}
    B & \bm{0} \\ {A} &- \mathrm{id} \\ 
\end{bmatrix}\begin{bmatrix}
    \bm{y} \\ \bm{w}
\end{bmatrix} = \begin{bmatrix}
    \bm{b} \\ \bm{0}
\end{bmatrix}, \quad w_m = \beta, \quad\begin{bmatrix}
    \bm{0} \\ -\bm{\gamma}
\end{bmatrix} \le \begin{bmatrix}
    \bm{y} \\ \bm{w}
\end{bmatrix} \le  \begin{bmatrix}
    \bm{k} \\ \bm{\gamma}
\end{bmatrix}\right\}
\]
Let $M$ be the matrix that is obtained by by appending a row of $e_{n+m}^T = \begin{bmatrix}
    0 & \cdots & 0 & 1
\end{bmatrix}$ row to the end of $\begin{bmatrix}
    B & {0} \\ A & -\mathrm{id} \\ 
\end{bmatrix}$, i.e. $M = \begin{bmatrix}
    B & 0 \\ A & -\mathrm{id} \\ \multicolumn{2}{c}{e_{n+m}^T} 
\end{bmatrix}$. By basic properties of TU matrices, $M$ is TU. Then we can rewrite:
\[
T =\left\{  \begin{bmatrix}
    \bm{y} \\ \bm{w}
\end{bmatrix} : M  \begin{bmatrix}
    \bm{y} \\ \bm{w}
\end{bmatrix} = \begin{bmatrix}
    \bm{b} \\ \bm{0} \\ \beta
\end{bmatrix},  \quad \begin{bmatrix}
    \bm{0} \\ -\bm{\gamma}
\end{bmatrix} \le \begin{bmatrix}
    \bm{y} \\ \bm{w}
\end{bmatrix} \le  \begin{bmatrix}
    \bm{k} \\ \bm{\gamma}
\end{bmatrix}\right\} = B_{\begin{bmatrix}
    \bm{0} \\ -\bm{\gamma}
\end{bmatrix}, \begin{bmatrix}
    \bm{k}\\ \bm{\gamma}
\end{bmatrix}}\left( M,\begin{bmatrix}
    \bm{b} \\ \bm{0} \\ \beta
\end{bmatrix} \right).\] 

Further rewrite $q(x)$ as follows: 
\[
q(x) = \sum_{\beta \in \Z} T\left(M,\begin{bmatrix}
    \bm{b} \\ \bm{0} \\ \beta
\end{bmatrix}, \begin{bmatrix}
    \bm{0} \\ -\bm{\gamma}
\end{bmatrix}, \begin{bmatrix}
    \bm{k}\\ \bm{\gamma}
\end{bmatrix}; \begin{bmatrix}
    1&  1& \cdots &1 & r_1 & \cdots& r_{m-1} & 1
\end{bmatrix}^T\right) x^{\beta}.
\]

So the coefficients of $q(x)$ form the following sequence: 
\[
\left \{ T\left(M,\begin{bmatrix}
    \bm{b} \\ \bm{0} \\ \beta
\end{bmatrix}, \begin{bmatrix}
    \bm{0} \\ -\bm{\gamma}
\end{bmatrix}, \begin{bmatrix}
    \bm{k}\\ \bm{\gamma}
\end{bmatrix}; \begin{bmatrix}
    1&  1& \cdots &1 & r_1 & \cdots& r_{m-1} & 1
\end{bmatrix}^T\right)\right\}_{\beta \in \Z}
\]

Observe that this sequence does not have any internal zeros. This is because given any integer vectors $\bm \beta_1, \bm \beta_2$, the set $\{ \bm{d}: M \bm y = \bm d \text{ for some } \bm{\beta_1} \le \bm{y} \le \bm \beta_2\}$ is convex, and if $\bm{d}$ is an integer vector in this set, then $\bm \beta_1 \le \bm{y} \le \bm \beta_2, M \bm y = \bm d$ has an integer solution by \Cref{unimodular polytopes have integer vertices}. 

Thus, this sequence is log-concave due to \Cref{thm- Midpoint log-concavity for TU matrices} and because $2\begin{bmatrix}
    \bm{b} \\ \bm{0} \\ \beta
\end{bmatrix} = \begin{bmatrix}
    \bm{b} \\ \bm{0} \\ \beta -1 
\end{bmatrix} + \begin{bmatrix}
    \bm{b} \\ \bm{0} \\ \beta +1
\end{bmatrix}$.
\end{proof}

\begin{remark}
\label{why pAB is not LC for unimodular matrices}
Suppose $M$ is a general unimodular matrix. To prove that $P_{M, \bm k}$ is $\LC$, we need to show that $\sum_{\substack{\bm{0} \le \bm{y} \le \bm{k} \\ B\bm{y} = b  }} \bm{x}^{A\bm{y}}$ is $\LC$ for every partition of the rows of $M$ into two submatrices $A$ and $B$. However, the proof of \Cref{pAB is in LC} does not always for unimodular matrices since $\begin{bmatrix}
    B & 0 \\ A & \mathrm{id} \\ \multicolumn{2}{c}{e_{n+m}^T} 
\end{bmatrix}$ does not necessarily remain unimodular. 

\end{remark}

\begin{corollary}
\label{k-solution Laurent polynomial of tu matrices are lc}
    By \Cref{when is PM in LC} and \Cref{pAB is in LC}, the $\bm{k}$-solution Laurent polynomial of any TU matrix is $\LC$ for any $\bm{k} \ge \bm{0}$.  
\end{corollary}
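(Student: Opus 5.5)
The corollary is a direct combination of the two preceding propositions, so the plan is simply to match their hypotheses carefully. Fix a TU matrix $M \in \{0,\pm1\}^{m\times n}$ and $\bm{k} \in \Z_{\ge 0}^n$.

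By \Cref{when is PM in LC}, it suffices to verify the following. For every subset $S \subseteq [m]$ of rows, put $B = M_S$ and $A = M_{[m]\setminus S}$, and take any $\bm{b} \in \Z^{|S|}$. Then every positive evaluation of $\sum_{\bm 0 \le \bm y \le \bm k,\, B\bm y = \bm b} \bm{x}^{A\bm y}$ in all but one variable must have log-concave coefficients.

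The first step is to observe that the stacked matrix $\begin{bmatrix} A \\ B\end{bmatrix}$ is just a row permutation of $M$. Permuting rows preserves total unimodularity, since every square submatrix of the permuted matrix is a row permutation of a square submatrix of $M$, and so its determinant changes at most by a sign. Thus $\begin{bmatrix} A\\ B\end{bmatrix}$ is TU, and \Cref{pAB is in LC} applies directly to give the required log-concavity.

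A few degenerate cases need checking.
\begin{itemize}
\item When $S = \emptyset$, the matrix $B$ is empty, there is no constraint $B\bm y = \bm b$, and the Laurent polynomial is $P_{M,\bm k}$ itself.
\item When $S = [m]$, there are no remaining variables, so the resulting polynomial is a nonnegative constant. It is trivially $\LC$ under the paper's conventions, and the univariate case has no evaluation to perform.
\item When the fiber $\{B\bm y=\bm b, \bm 0 \le \bm y \le \bm k\}$ is empty, the polynomial is zero, and the zero sequence is (vacuously) log-concave.
\end{itemize}
I do not expect any of these to be an obstacle.

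The one point that deserves care is the base of the recursive definition. Membership in $\LC_1$ for the final univariate coefficient extractions also needs no internal zeros, not just the inequalities $a_k^2 \ge a_{k-1}a_{k+1}$. This is exactly what the no-internal-zeros argument inside the proof of \Cref{pAB is in LC} supplies, via \Cref{unimodular polytopes have integer vertices}. With that in hand, the conclusion follows for every $\bm{k} \ge \bm{0}$.
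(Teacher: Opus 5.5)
Your proposal is correct and is the paper's argument: combine \Cref{when is PM in LC} with \Cref{pAB is in LC}, noting that $\begin{bmatrix} A \\ B \end{bmatrix}$ is a row permutation of $M$ and hence still TU. Your added remarks on the degenerate cases are accurate bookkeeping that the paper leaves implicit, and the no-internal-zeros point is indeed already handled inside the proof of \Cref{pAB is in LC}.
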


\section{Capacity bounds for lattice points of TU polytopes}
\label{section: capacity bounds}
With \Cref{k-solution Laurent polynomial of tu matrices are lc} at our disposal, we can use \Cref{capacity bound for LC polynomials} to derive capacity bounds for $B_{(\bm{0}, \bm{k})}(M,\bm{b})$ based on $\cpc_{\bm{b}}(P_{M,\bm{k}})$ and $\Deg_{i}^{\bm b}(P_{M, \bm{k}})$. In this section, we will focus on finding explicit bounds for the set $\Deg_{i}^{\bm{b}}(P_{M, \bm{k}})$ based on $M$, $\bm{b}$ and $\bm{k}$. We look at general TU matrices in \Cref{subsection: general capacity bounds via certificate of boundedness} and find a simply exponential lower bound for $B(M,\bm{b}):= \# \{\bm{y} \in \Z_{\ge 0}^n: M \bm{y} = \bm{b} \}$. In \Cref{subsection: capacity bound for special cases}, we look at two special classes of TU matrices and find another lower bound for $B_M(\bm b)$.

First, let us find an alternate characterization for both $\min \Deg_{i}^{\bm{b}}(P_{M, \bm{k}})$ and $\max \Deg_{i}^{\bm{b}}(P_{M, \bm{k}})$ using LP strong duality. For any matrix $M \in \R^{m \times n}$, let $\bm{m}_{(i)}$ be the $i$-th row of $M$ and let $M_{[i+1,m]}$ denote the submatrix of $M$ consisting of the rows $i+1, \cdots, m$. Further define $\bm{s}_i$ to be the sum of the first $i$ rows of $M$. In particular, if $\bm{b} \in \R^m$ is a vector, then $\bm{b}_{[i+1,m]}$ is the restriction of $\bm{b}$ to its $i+1, \cdots, m$-th entries.
\begin{proposition}
\label{tight bounds for beta}
      Let $M$ be an $m \times n$ TU matrix and $\bm{k} \in \Z^n$. For any $i \in [m-1]$:
      \[\max \Deg_i^{\bm{b}}(P_{M, \bm{k}}) = \min \{\bm{b}_{[i+1,m]}^T\bm{w}+ \bm{k}^T \bm{z}: (M_{[i+1,m]})^T \bm{w} + \bm{z} \ge \bm{m}_{(i)} , \, \bm{z} \ge \bm{0} \},\]
      Similarly: \[\min \Deg_i^{\bm{b}}(P_{M, \bm{k}}) = \max \{ \bm{b}_{[i+1,m]}^T\bm{w} - \bm{k}^T \bm{z}: (M_{[i+1,m]})^T \bm{w} - \bm{z} \le \bm{m}_{(i)} , \, \bm{z} \ge \bm{0} \}.\] 
\end{proposition}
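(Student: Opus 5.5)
We will identify $\Deg_i^{\bm b}(P_{M,\bm k})$ with the set of integer values of a linear functional on a TU polytope, and then apply LP duality. By definition, $\Deg_i^{\bm b}(P_{M,\bm k})$ is the set of exponents of $x_i$ appearing in $[x_{i+1}^{b_{i+1}}\cdots x_m^{b_m}]P_{M,\bm k}$. By the computation in the proof of \Cref{when is PM in LC}, this coefficient equals $\sum \bm x^{M_{[1,i]}\bm y}$, summed over integer $\bm y$ with $\bm 0\le \bm y\le \bm k$ and $M_{[i+1,m]}\bm y=\bm b_{[i+1,m]}$. It follows that
\[
\Deg_i^{\bm b}(P_{M,\bm k})=\{\bm m_{(i)}^T\bm y : \bm y\in Q\cap\Z^n\},\qquad Q=\{\bm y: M_{[i+1,m]}\bm y=\bm b_{[i+1,m]},\ \bm 0\le \bm y\le \bm k\}.
\]
No positive coefficients can cancel, so every such exponent genuinely appears. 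Thus $\max\Deg_i^{\bm b}$ is the integer program $\max\{\bm m_{(i)}^T\bm y:\bm y\in Q\cap\Z^n\}$, and similarly for the minimum.

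The second step replaces the integer program by its LP relaxation. The submatrix $M_{[i+1,m]}$ is TU, hence unimodular, so \Cref{unimodular polytopes have integer vertices} (with $\bm k_1=\bm 0$, $\bm k_2=\bm k$) shows that every vertex of $Q$ is integral whenever $Q$ is nonempty. A linear function on a nonempty polytope attains its maximum at a vertex. Hence
\[
\max_{\bm y\in Q\cap\Z^n}\bm m_{(i)}^T\bm y=\max_{\bm y\in Q}\bm m_{(i)}^T\bm y,
\]
and the same holds for the minimum.

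The third step is strong duality. Write the primal as: maximize $\bm m_{(i)}^T\bm y$ subject to $M_{[i+1,m]}\bm y=\bm b_{[i+1,m]}$, $\bm y\le\bm k$, $\bm y\ge\bm 0$. The equality constraints get free multipliers $\bm w$, the upper bounds get multipliers $\bm z\ge\bm 0$, and the sign constraint $\bm y\ge \bm 0$ turns dual equalities into inequalities. The resulting dual is: minimize $\bm b_{[i+1,m]}^T\bm w+\bm k^T\bm z$ subject to $(M_{[i+1,m]})^T\bm w+\bm z\ge \bm m_{(i)}$, $\bm z\ge\bm 0$. This is exactly the stated formula. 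For the minimum, we apply the same argument to $-\bm m_{(i)}$ and substitute $\bm w\mapsto-\bm w$, which yields the second formula with constraint $(M_{[i+1,m]})^T\bm w-\bm z\le\bm m_{(i)}$.

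The main obstacle is the degenerate case in which $Q$ is empty, i.e.\ the coefficient is zero and $\Deg_i^{\bm b}$ is empty. Then the primal is infeasible, and since the dual is always feasible (take $\bm w=\bm 0$ and $\bm z$ large), the dual is unbounded below. Under the conventions $\max\emptyset=-\infty$ and $\min\emptyset=+\infty$, the identity still holds. We will state this convention explicitly, or equivalently assume $[\bm x_{[i+1,m]}^{\bm b_{[i+1,m]}}]P_{M,\bm k}\neq0$, which is the only case used later. Boundedness of $Q$ is automatic from $\bm 0\le\bm y\le\bm k$, so strong duality applies without further hypotheses.
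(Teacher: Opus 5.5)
Your proposal is correct and follows essentially the same route as the paper: you identify $\Deg_i^{\bm b}(P_{M,\bm k})$ with the integer values of $\bm m_{(i)}^T\bm y$ over the box-constrained fiber, use total unimodularity (via the integral-vertex lemma) to pass to the LP relaxation, and conclude by strong LP duality, recovering the minimum formula by negation and $\bm w\mapsto -\bm w$. The paper's proof is terser: you also justify the integrality step explicitly and handle the empty-fiber case (where the dual is unbounded and one needs the convention $\max\emptyset=-\infty$), both of which the paper leaves implicit.
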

\begin{proof}
    The set of degrees of $y_i$ in $[{y_{i+1}^{\alpha_{i + 1}}} \cdots y_m^{\alpha_m}] P_{M,\bm{0},\bm{k}}$ is as follows: 
    \[
    \Deg_{i}^{\bm{b}}(P_{M, \bm{k}}) = \{\bm{m}_{(i)}^T \bm{x}: M_{[i+1,m]}\bm{y} = \bm{b}_{[i+1,m]}, \, \bm{0} \le \bm{y} \le \bm{k}, \, \bm y \in \Z^n \},
    \]
    since $M$ is a TU matrix, a tight upper bound for this set would be the solution to the LP problem:
    \[
     \max \{\bm{m}_{(i)}^T \bm{y}: M_{[i+1,m]}\bm{y} = b_{[i+1,m]}, \bm{0} \le \bm{y} \le \bm{k} \}. \tag{P} 
    \]
    The dual of this LP is:
    \[
    \min \{\bm{b}_{[i+1,m]}^T\bm{w}+ \bm{k}^T \bm{z}: (M_{[i+1,m]})^T \bm{w} + \bm{z} \ge \bm{m}_{(i)} , \bm{z} \ge \bm{0} \}. \tag{D}
    \]
    By strong LP duality, the objective value of (P) and (D) are equal, completing the first part of the proof. The argument for $\min \Deg_{i}^{\bm{b}}(P_{M, \bm{k}})$ is similar. 
\end{proof}

Computing the optimal values of the LPs in \Cref{tight bounds for beta} for all $i \in [m-1]$ can be computationally costly. Instead, we evaluate the objective functions of these LPs at feasible points, getting weaker bounds for $\Deg_{i}^{\bm{b}}(P_{M, \bm{k}})$. We can use this looser bound to obtain a capacity lower bound for $B_{( \bm{0}, \bm{k})}(M,\bm{b})$ by \Cref{capacity bound for LC polynomials}. We will look at two different sets of feasible points for these LPs. First, we will let $\bm{w} = \bm 0$ and adjust $\bm z$ accordingly to find a feasible point. This way we get capacity lower bounds for any TU matrix $M$, which are stated in \Cref{subsection: general capacity bounds via certificate of boundedness}. Second, we will let $\bm z = \bm 0$ and adjust $\bm{w}$ accordingly. In case a feasible $\bm w$ exists, we can find an alternate capacity lower bound for $B_{(\bm 0 , \bm k)}(M,\bm{b})$, see \Cref{subsection: capacity bound for special cases}. In particular, we find a lower bound for $B_{(\bm 0 , \bm k)}(M,\bm{b})$ when $M$ has non-negative partial row sums and recover a lower bound for the number of lattice points of a flow polytope already presented in \cite{LM26} and \cite{LMY26}.

\subsection{General capacity bounds for TU polytopes}
\label{subsection: general capacity bounds via certificate of boundedness}
As proven in \Cref{k-solution Laurent polynomial of tu matrices are lc}, the $\bm{k}$-solution Laurent polynomial of any TU matrix is $\LC$. Thus we can obtain a lower bound for $B_{(\bm{0}, \bm{k})}(M , \bm{b})$ using \Cref{capacity bound for LC polynomials}:
\begin{theorem}
\label{thm: initial capacity bound for tu matrices}
    For any $m \times n$ TU matrix $M$ and any integer vectors $\bm{k} \in \Z^n$, $\bm{b} \in \Z^m$ we have: 
    \[
    B_{(\bm{0}, \bm{k})} (M , \bm{b}) \ge \cpc_{\bm{b}}(P_{M, \bm{k}}) \cdot \prod_{i = 1}^m \max \left \{ \frac{|\alpha_i - b_i|^{|\alpha_i - b_i|}}{(1 + |\alpha_i - b_i|)^{1 + |\alpha_i - b_i|}},\frac{|\gamma_i - b_i|^{|\gamma_i - b_i|}}{(1 + |\gamma_i - b_i|)^{1 + |\gamma_i - b_i|}}  \right\},
    \]
    where:
    \[
    \alpha_i = -\sum_{j \in [n] : M_{i,j} = -1} k_j, \quad \gamma_i = \sum_{j \in [n] : M_{i,j} = 1} k_j \quad\forall i \in [m].
    \]
     
\end{theorem}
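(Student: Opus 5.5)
The plan is to apply \Cref{capacity bound for LC polynomials} directly to $p = P_{M,\bm{k}}$ at $\bm{\alpha} = \bm{b}$. This is legitimate because $P_{M,\bm{k}}$ is $\LC$ by \Cref{k-solution Laurent polynomial of tu matrices are lc}, and the coefficient $[\bm{x}^{\bm{b}}]P_{M,\bm{k}}$ equals $B_{(\bm{0},\bm{k})}(M,\bm{b})$ by the definition of the $\bm{k}$-solution Laurent polynomial. \Cref{capacity bound for LC polynomials} then yields
\[
B_{(\bm{0},\bm{k})}(M,\bm{b}) \ge \cpc_{\bm{b}}(P_{M,\bm{k}}) \prod_{i=1}^m \frac{|\beta_i - b_i|^{|\beta_i-b_i|}}{(1+|\beta_i-b_i|)^{1+|\beta_i-b_i|}},
\]
where each $\beta_i$ may be chosen independently as either a lower or an upper bound for $\Deg_i^{\bm{b}}(P_{M,\bm{k}})$. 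The factor for index $i$ only requires some valid bound $\beta_i$, and the choice for one $i$ does not constrain the choice for another. So we may choose, for each $i$, whichever of $\alpha_i$ or $\gamma_i$ gives the larger factor. This produces the stated $\max$.

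The remaining step is to verify that $\alpha_i \le \min \Deg_i^{\bm{b}}(P_{M,\bm{k}})$ and $\gamma_i \ge \max \Deg_i^{\bm{b}}(P_{M,\bm{k}})$. By definition, $\Deg_i^{\bm{b}}$ consists of the values $\bm{m}_{(i)}^T\bm{y}$ over integer $\bm{y}$ with $\bm{0}\le\bm{y}\le\bm{k}$ and $M_{[i+1,m]}\bm{y}=\bm{b}_{[i+1,m]}$. Since the entries of $M$ lie in $\{0,\pm1\}$, any $\bm{0}\le\bm{y}\le\bm{k}$ satisfies
\[
-\sum_{j: M_{i,j}=-1} k_j \le \bm{m}_{(i)}^T\bm{y} \le \sum_{j:M_{i,j}=1} k_j .
\]
Equivalently, one can plug the feasible point $\bm{w}=\bm{0}$, $\bm{z}=(\bm{m}_{(i)})_+$ into the dual LP of \Cref{tight bounds for beta}, which gives the upper bound $\bm{k}^T(\bm{m}_{(i)})_+=\gamma_i$; the symmetric choice gives the lower bound $\alpha_i$. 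I will present the direct elementwise bound, since it avoids LP duality altogether. For $i=m$ there are no extra constraints, and the same bound applies.

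I do not expect a genuine obstacle, only two edge cases. First, if $\Deg_i^{\bm{b}}$ is empty, the coefficient being bounded is $0$. In that case $\bm{b}\notin\newt(P_{M,\bm{k}})$ (this uses saturation, \Cref{support is saturated for LC polynomials}, combined with the integrality from \Cref{unimodular polytopes have integer vertices}), so the capacity is $0$ by \Cref{capacity is nonzero iff alpha is in newton polytope}, and the inequality holds trivially. Second, when $|\beta_i-b_i|=0$ the factor $0^0/1^1$ must be read as $1$, consistent with the convention already used in \Cref{capacity bound for LC polynomials}.
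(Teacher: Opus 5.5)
Your proposal is correct and follows the paper's proof. Both apply \Cref{capacity bound for LC polynomials} to the $\LC$ polynomial $P_{M,\bm{k}}$ at $\bm{b}$ and choose, for each $i$ independently, the better of $\alpha_i,\gamma_i$. The only difference is that you bound $\bm{m}_{(i)}^T\bm{y}$ entrywise for every $i$, whereas the paper does so only for $i=m$ and, for $i<m$, plugs the dual feasible points $\bm{w}=\bm{0}$, $\bm{z}=(\bm{m}_{(i)})_+$ or $\bm{z}=-(\bm{m}_{(i)})_-$ into \Cref{tight bounds for beta}, which gives exactly the same bounds.
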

\begin{proof}
    It follows from \Cref{capacity bound for LC polynomials} that: 
     \[
    B_{(\bm{0}, \bm{k})} (M , \bm{b}) \ge \cpc_{\bm{b}}(P_{M, \bm{k}}) \cdot \prod_{i = 1}^m \frac{|\beta_i - b_i|^{|\beta_i - b_i|}}{(1 + |\beta_i - b_i|)^{1 + |\beta_i - b_i|}},
    \]
    where $\beta_i$ is either a lower bound or an upper bound $\Deg_{i}^{\bm{b}}(P_{M, \bm{k}})$. Recall that for any $r \in \R$:
    \[
    (r)_+ = \begin{cases}
        r & \text{ if } r >0\\
        0 & \text{ otherwise}
    \end{cases}
    \quad
    \text{and}
    \quad
    (r)_- = \begin{cases}
        r & \text{ if } r <0\\
        0 & \text{ otherwise}
    \end{cases},
    \]
    and for any vector $\bm{v} \in \R^n$, both $(\bm{v})_+$, $(\bm{v})_-$ are applied coordinate-wise. It is easy to check that the pair $\bm w = \bm 0, \bm z =( \bm m_{(i)})_+$ satisfy $(M_{[i+1,m]})^T \bm{w} + \bm{z} \ge \bm{m}_{(i)} , \bm{z} \ge \bm{0}$. It follows from \Cref{tight bounds for beta} that $\gamma_i$ is indeed an upper bound for $\Deg_{i}^{\bm b}(P_{M , \bm k})$ for any $i \in [m-1]$. Similarly, $\bm w = \bm 0, \bm z =-( \bm m_{(i)})_-$ is a feasible point for $(M_{[i+1,m]})^T \bm{w} - \bm{z} \le \bm{m}_{(i)} , \bm{z} \ge \bm{0}$, implying that $\alpha_i$ is a lower bound for $\Deg_{i}^{\bm b}(P_{M , \bm k})$ for any $i \in [m-1]$.

    It is also easy to check that $\alpha_m, \gamma_m$ are bounds for $\Deg_{m}^{\bm{b}}(P_{M, \bm k})$, since: 
    \[
    -\sum_{j \in [n]: M_{m,j} = -1} k_j \le \sum_{j \in [n]} M_{m,j}x_j \le \sum_{j \in [n]: M_{m,j} = 1} k_j, \quad \forall \bm0 \le \bm x \le \bm k
    \]
    Hence proving the statement. 
\end{proof}

Let us assume that the fiber $\{ \bm{y} \in \Z_{\ge 0}^n: M \bm{y} = \bm{b}\}$ is finite. Then there should exist some $\bm{k}$ such that any $\bm{y}$ in this fiber satisfies $\bm{y} \le \bm{k}$. For such a $\bm{k}$, the bound of \Cref{thm: initial capacity bound for tu matrices} becomes a bound for $B(M,\bm{b}) = \# \{\bm{y} \in \Z_{\ge 0}^n: M \bm{y} = \bm{b}\}$.  
Through the rest of this section, we will explore the properties of TU matrices $M$ with finite fibers and prove capacity bounds for $B(M, \bm b)$. 

\begin{lemma}
    \label{bound for finite fibers}
    Suppose $M$ is an $m \times n$ TU matrix and the fiber $\{\bm{y} \in \Z_{\ge 0}^n: M \bm y= \bm b \}$ is nonempty and finite for some integer vector $\bm b$. Then any point of this fiber satisfies $y_i \le \|\bm b \|_1$ for any $i \in [n]$.
\end{lemma}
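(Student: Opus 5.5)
The plan is to exhibit a dual certificate with entries in $\{0,\pm 1\}$. Concretely, fix $i \in [n]$. I will find an integer vector $\bm w \in \{-1,0,1\}^m$ with $M^T \bm w \ge \bm e_i$ coordinate-wise. Then every $\bm y \ge \bm 0$ with $M\bm y = \bm b$ satisfies
\[
y_i = \bm e_i^T \bm y \le (M^T\bm w)^T \bm y = \bm w^T M \bm y = \bm w^T \bm b \le \|\bm b\|_1,
\]
which is exactly the claim.

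\textbf{Step 1: the real polytope is bounded.} Let $P = \{\bm y \ge \bm 0 : M\bm y = \bm b\}$. I will show $P$ is bounded. If it were not, its recession cone $\{\bm y \ge \bm 0: M\bm y = \bm 0\}$ would contain a nonzero vector. This cone is rational, so it would contain a nonzero integer vector $\bm r$. Since the fiber is nonempty, pick an integer point $\bm y_0$ in it. Then $\bm y_0 + t\bm r$ lies in the fiber for every $t \in \Z_{\ge 0}$, contradicting finiteness. Hence the LP $\max\{y_i : \bm y \in P\}$ is feasible and bounded.

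\textbf{Step 2: a real dual certificate exists.} By strong LP duality, the dual $\min\{\bm b^T\bm w : M^T\bm w \ge \bm e_i\}$ is feasible. So there is a real $\bm w^0$ with $M^T\bm w^0 \ge \bm e_i$. Rescaling by $t = 1/\max(1,\|\bm w^0\|_\infty)$ gives $\bm w^1 = t\bm w^0$ satisfying
\[
M^T\bm w^1 \ge \bm 0, \qquad (M^T\bm w^1)_i \ge t > 0, \qquad -\bm 1 \le \bm w^1 \le \bm 1.
\]

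\textbf{Step 3: integrality via total unimodularity.} This is the main step. Consider the LP
\[
\max\{(M^T\bm w)_i : M^T\bm w \ge \bm 0,\ -\bm 1 \le \bm w \le \bm 1\}.
\]
Its feasible region is bounded and nonempty, since it contains $\bm 0$. The constraint matrix $\begin{bmatrix} M^T \\ I \\ -I\end{bmatrix}$ is TU, by standard closure properties of TU matrices (transpose, appending identity rows, negating rows). The right-hand sides are integral. Therefore the polytope has integral vertices, and the maximum is attained at some $\bm w \in \{-1,0,1\}^m$. By Step 2 the optimal value is at least $t > 0$. Since $M$ and $\bm w$ are integral, the value is an integer, so it is at least $1$. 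This $\bm w$ satisfies $M^T \bm w \ge \bm e_i$, which completes the argument from the first paragraph.

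The only delicate point is Step 3. The naive dual solution need not be small, so one must pass from the rescaled real solution to an integral one in the box $[-1,1]^m$. The integrality of the objective value is what recovers the strict inequality $(M^T\bm w)_i \ge 1$.
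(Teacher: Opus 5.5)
Your proof is correct, and it takes a genuinely different route from the paper's: you argue on the dual side, while the paper argues on the primal side.

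\textbf{The paper's route.} The paper bounds each vertex $\bm y^*$ of $P$ directly. It chooses a basis $B \supseteq \supp(\bm y^*)$ and rows $R$ with $M_{R,B}$ invertible, and writes $\bm y^*_B = M_{R,B}^{-1}\bm b_R$. Since $M_{R,B}$ is a nonsingular TU matrix, $M_{R,B}^{-1}$ has entries in $\{0,\pm1\}$ by Cramer's rule. Convexity then extends the bound to all of $P$.

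\textbf{Your route.} You apply total unimodularity to the dual polytope $\{\bm w : M^T\bm w\ge\bm 0,\ -\bm 1\le\bm w\le\bm 1\}$. The rescaling step places a real dual solution inside the box, and the integrality of $(M^T\bm w)_i$ at an integral vertex recovers $(M^T\bm w)_i\ge 1$. This is the one new idea your route needs, and you handle it correctly. The resulting inequality $y_i\le\bm w^T\bm b$ holds for every real $\bm y\in P$ directly, with no appeal to vertices or convexity.

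\textbf{What your route buys.} Your certificate $\bm w\in\{0,\pm1\}^m$ with $M^T\bm w\ge\bm e_i$ depends only on $M$ and $i$, not on $\bm b$. So a single nonempty finite fiber yields the bound $y_i\le\bm w^T\bm b'$ for every right-hand side $\bm b'$ simultaneously. This fits naturally with the paper's later use of dual feasible points in \Cref{tight bounds for beta}. It also fits the Motzkin-type certificate used to show $\Omega\neq\emptyset$: summing your certificates over $i$ gives an integral $\bm w$ with $\bm w^T\bm a_j\ge 1$ for all $j$.

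\textbf{What the paper's route buys.} It needs only basic-solution structure and Cramer's rule. It also gives vertex-specific bounds that involve only $\rank(M)$ entries of $\bm b$.

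Finally, your Step 1 actually justifies that $P$ is bounded via the rational recession cone, which the paper only asserts. As your argument shows, that step does not need total unimodularity at all.
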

\begin{proof}
    In fact, we can prove that any point $ \bm y $ of $P = \{ \bm y : M \bm y = \bm b, \bm y \ge \bm 0\}$ satisfies $y_i \le \|\bm b \|_1$.
    
    We know that the polyhedron $P$ is bounded since $M$ is TU and $\{\bm{y} \in \Z_{\ge 0}^n: M \bm y= \bm b \}$ is finite. So $P$ is the convex hull of its vertices and by convexity, it suffices to show that any vertex $ \bm y^* $ of $\{ \bm y : M \bm y = \bm b, \bm y \ge \bm 0\}$ satisfies $y_i \le \|\bm b \|_1$.

    Let $r=\rank(M)$ and let $\bm{y}^*$ be a vertex of $P$. The columns of $M$ indexed by $\supp(\bm{y}^*):= \{ i \in [n]: y^*_i \neq 0\}$ are linearly independent. Otherwise, there would exist a nonzero vector $\bm{v}$ supported on $\supp(\bm{y}^*)$ with $M\bm{v}=0$, so that $\bm{y}^*\pm\varepsilon\bm{v}\in P$ for sufficiently small $\varepsilon>0$, contradicting the fact that $\bm y^*$ is a vertex.
    
    Let $B$ be a set of $r$ linearly independent columns of $M$ containing $\supp(\bm y^*)$ ( $B$ is also known as the basis of vertex $\bm y^*$ in linear programming). Then $y_i=0$ for $i\notin B$. Let $M_{:,B}$ be the submatrix of $M$ containing the columns indexed by $B$. $M_{:, B}$ still has rank $r$, so choose a $R\subseteq[m]$ with $|R|=r$ such that the submatrix of $M_{:,B}$ containing the rows indexed by $R$ is invertible. Let this $r \times r$ submatrix be $M_{R,B}$. 
    
    Restricting $M\bm{y}^*=\bm{b}$ to the rows in $R$ and columns in $B$ gives us that $M_{R,B} \bm y^*_B = \bm b_r$, thus $\bm{y}_B=M_{R,B}^{-1}\bm{b}_R.$ Since $M$ is TU, $M_{R,B}^{-1}$ is TU as well, and any entry of $M_{R,B}^{-1}$ belongs to $\{0,\pm1\}$. Consequently, for each $i\in B$ we have: 
\[
    y_i\le \sum_{j\in R}|b_j|\le \|\bm{b}\|_1.
\]
The same bound holds for $i\notin B$ because $y_i=0$.
\end{proof}

\begin{corollary}
\label{secondary lower bound for tu matrices using rank and b}
    Suppose $M \in \{ 0, \pm 1\}^{m \times n}$ is a TU matrix. Let $r = \rank(M)$ and $c = \|b \|_1$. Then: 
    \[
    B (M,\bm{b}) \ge \cpc_{\bm{b}}(P_{M, c \cdot\bm{1}}) \cdot \prod_{i = 1}^m \max \left \{ \frac{|\alpha_i - b_i|^{|\alpha_i - b_i|}}{(1 + |\alpha_i - b_i|)^{1 + |\alpha_i - b_i|}},\frac{|\gamma_i - b_i|^{|\gamma_i - b_i|}}{(1 + |\gamma_i - b_i|)^{1 + |\gamma_i - b_i|}}  \right\},
    \]
    where   \[
    \alpha_i = -c \cdot (\# \text{ of (-1)s in row }i), \quad \gamma_i = c \cdot (\# \text{ of (+1)s in row }i) \quad\forall i \in [m].
    \]
\end{corollary}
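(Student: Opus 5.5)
The plan is to specialize \Cref{thm: initial capacity bound for tu matrices} to the box $\bm{k} = c\cdot\bm{1}$, where $c = \|\bm b\|_1$. The inequality is about $B(M,\bm b) = \#\{\bm y \in \Z_{\ge 0}^n : M\bm y = \bm b\}$, so we first reduce this count to $B_{(\bm 0, c\bm 1)}(M,\bm b)$. Implicitly we assume, as in the surrounding discussion, that the fiber is finite. If the fiber is empty, then $\bm b \notin \newt(P_{M,c\bm 1})$, because $\supp(P_{M,c\bm1})$ is saturated (\Cref{support is saturated for LC polynomials} together with \Cref{k-solution Laurent polynomial of tu matrices are lc}). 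Then $\cpc_{\bm b}(P_{M,c\bm 1}) = 0$ by \Cref{capacity is nonzero iff alpha is in newton polytope}, and the inequality holds trivially as $0 \ge 0$.

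So assume the fiber is nonempty and finite. By \Cref{bound for finite fibers}, every $\bm y$ in the fiber satisfies $y_i \le \|\bm b\|_1 = c$ for all $i$. Conversely, every $\bm y$ with $\bm 0 \le \bm y \le c\bm 1$ and $M\bm y = \bm b$ lies in the fiber. Hence
\[
B(M,\bm b) = B_{(\bm 0, c\cdot \bm 1)}(M,\bm b).
\]

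Now apply \Cref{thm: initial capacity bound for tu matrices} with $\bm k = c\cdot\bm 1$. Its quantities become
\[
\alpha_i = -\sum_{j: M_{i,j}=-1} c = -c\cdot\#\{j : M_{i,j} = -1\},
\qquad
\gamma_i = c\cdot \#\{j: M_{i,j}=1\},
\]
which are exactly the $\alpha_i,\gamma_i$ in the statement. The capacity factor becomes $\cpc_{\bm b}(P_{M,c\bm 1})$, and the product of maxima carries over verbatim. This gives the claimed bound.

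There is no real obstacle here. The only points needing care are the degenerate empty-fiber case handled above, and confirming that the hypothesis $M \in \{0,\pm1\}^{m\times n}$ TU matches the setting of \Cref{thm: initial capacity bound for tu matrices}. The quantity $r = \rank(M)$ named in the statement is not actually needed for this argument.
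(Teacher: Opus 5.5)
Your proposal is correct and follows the paper's proof. Both arguments identify $B(M,\bm b)$ with $B_{(\bm 0, c\cdot\bm 1)}(M,\bm b)$ via \Cref{bound for finite fibers} and then apply \Cref{thm: initial capacity bound for tu matrices} with $\bm k = c\cdot\bm 1$. Two minor simplifications: the infinite-fiber case need not be assumed away, since the left side is then $+\infty$ and the bound is trivial (as the paper notes); and in the empty-fiber case the saturation argument is unnecessary, because $B(M,\bm b) = B_{(\bm 0, c\cdot\bm 1)}(M,\bm b) = 0$ and the theorem applies directly.
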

\begin{proof}
    The bound is trivial if $B(M, \bm b)$ is not finite. Otherwise, by \Cref{bound for finite fibers} $B(M, \bm b) = B_{\bm 0, c \cdot \bm 1}(M, \bm b)$. The result follows from \Cref{thm: initial capacity bound for tu matrices}.
\end{proof}

We can further simplify the lower bound of \Cref{secondary lower bound for tu matrices using rank and b} to obtain a simply exponential lower bound for $B(M,\bm{b})$ for large enough integer vectors $\bm{b}$. 

\begin{corollary}
\label{lower bound for tu matrices a step towards simply exponential bounds}
    Suppose $M \in \{ 0, \pm 1\}^{m \times n}$ is a TU matrix with finite fibers. Then: 
    \[
B(M,\bm{b}) \ge \cpc_{\bm{b}}\left( P_{M,\| b\|_1\cdot \bm{1}}\right)\cdot\exp\left(-m -m \log(1+ n \norm{\bm{b}}_1) \right)
    \]
\end{corollary}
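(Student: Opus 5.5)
We plan to start from \Cref{secondary lower bound for tu matrices using rank and b} with $c = \|\bm b\|_1$. Since $M$ has finite fibers, that corollary gives
\[
B(M,\bm b) \ge \cpc_{\bm b}(P_{M,c\bm 1}) \cdot \prod_{i=1}^m \max\{f(|\alpha_i - b_i|), f(|\gamma_i - b_i|)\},
\]
where $f(t) = \frac{t^t}{(1+t)^{1+t}}$. It then remains to bound each factor of the product from below by $e^{-1}(1+n\|\bm b\|_1)^{-1}$. Multiplying the $m$ factors gives exactly $\exp(-m - m\log(1+n\|\bm b\|_1))$.

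For the single-factor estimate we use two facts about $f$. First, $f$ is decreasing on $[0,\infty)$, since $\log f(t) = t\log t - (1+t)\log(1+t)$ has derivative $\log\frac{t}{1+t} < 0$. Second, $f(t) = \frac{1}{1+t}\left(\frac{t}{1+t}\right)^t \ge \frac{1}{e(1+t)}$, because $\left(1+\frac1t\right)^t \le e$. (We use the convention $f(0) = 1$, which is consistent with this.) So it suffices to show that at least one of $|\alpha_i - b_i|$ and $|\gamma_i - b_i|$ is at most $n\|\bm b\|_1$.

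We do this by bounding the smaller of the two. Let $p_i$ and $q_i$ be the numbers of $+1$s and $-1$s in row $i$, so $p_i + q_i \le n$, and recall $\alpha_i = -cq_i$ and $\gamma_i = cp_i$.
\begin{itemize}
\item If $B(M,\bm b) = 0$, the statement is trivial. Otherwise some $\bm y$ with $\bm 0 \le \bm y \le c\bm 1$ satisfies $M\bm y = \bm b$, so $b_i = \bm m_{(i)}^T\bm y$ lies in $[\alpha_i, \gamma_i]$.
\item Hence $|\alpha_i - b_i| + |\gamma_i - b_i| = \gamma_i - \alpha_i = c(p_i + q_i) \le cn$.
\item Therefore $\min\{|\alpha_i - b_i|, |\gamma_i - b_i|\} \le cn$. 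In fact it is at most $cn/2$, but $cn$ suffices.
\end{itemize}
Since $f$ is decreasing, the maximum in the $i$-th factor is at least $f(cn) \ge \frac{1}{e(1+n\|\bm b\|_1)}$.

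We expect no real obstacle here. The only delicate points are the elementary inequality $f(t) \ge 1/(e(1+t))$ and the use of feasibility to place $b_i$ between $\alpha_i$ and $\gamma_i$, which handles the case where the bound is not trivially true.
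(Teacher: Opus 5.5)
Your proof is correct and follows essentially the same route as the paper: apply \Cref{secondary lower bound for tu matrices using rank and b} with $c=\|\bm b\|_1$, use that $t\mapsto t^t/(1+t)^{1+t}$ is decreasing, show that one of the two distances in the $i$-th factor is at most $n\|\bm b\|_1$, and finish with $t^t/(1+t)^{1+t}\ge 1/(e(1+t))$. The only difference is in that middle step. The paper chooses the side via $\sgn(b_i)$ and the bounds $\gamma_i\le nc$, $\alpha_i\ge -nc$. You instead use feasibility, namely $b_i\in[\alpha_i,\gamma_i]$ via \Cref{bound for finite fibers} (cite it there), to get $|\alpha_i-b_i|+|\gamma_i-b_i|\le nc$. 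Your version makes explicit the feasibility that the paper's sign-based comparison tacitly relies on. In the empty-fiber case, note that the capacity vanishes, which follows from \Cref{secondary lower bound for tu matrices using rank and b} because every factor in that product is positive.
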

\begin{proof}
    By \Cref{secondary lower bound for tu matrices using rank and b} with $c = \norm{\bm{b}}_1$:
    \[
    B (M,\bm{b}) \ge \cpc_{\bm{b}}(P_{M, c \cdot\bm{1}}) \cdot \prod_{i = 1}^m \max \left \{ \frac{|\alpha_i - b_i|^{|\alpha_i - b_i|}}{(1 + |\alpha_i - b_i|)^{1 + |\alpha_i - b_i|}},\frac{|\gamma_i - b_i|^{|\gamma_i - b_i|}}{(1 + |\gamma_i - b_i|)^{1 + |\gamma_i - b_i|}}  \right\},
    \]
    where:
    \[
    \alpha_i = -c \cdot (\# \text{ of (-1)s in row }i), \quad \gamma_i = c \cdot (\# \text{ of (+1)s in row }i) \quad\forall i \in [m].
    \]
    
    Note that $\gamma_i \le nc$ and $ - nc \le \alpha_i$. Furthermore, $z \mapsto \frac{z^z}{(1 + z)^{1 + z}}$ is a decreasing function over $\R_{>0}$. Thus: 
\begin{align*}
    B (M,\bm{b}) &\ge \cpc_{\bm{b}}(P_{M, c \cdot\bm{1}}) \cdot \prod_{i = 1}^m\max \left \{ \frac{|\alpha_i - b_i|^{|\alpha_i - b_i|}}{(1 + |\alpha_i - b_i|)^{1 + |\alpha_i - b_i|}},\frac{|\gamma_i - b_i|^{|\gamma_i - b_i|}}{(1 + |\gamma_i - b_i|)^{1 + |\gamma_i - b_i|}}  \right\}\\
    &\ge \cpc_{\bm{b}}(P_{M, c \cdot\bm{1}}) \cdot \prod_{i = 1}^m\frac{|\sgn(b_i) nc - b_i |^{|\sgn(b_i) nc  - b_i |}}{(1 + |\sgn(b_i) nc  - b_i|)^{1 + |\sgn(b_i) nc  - b_i|}}\\
    &\ge \cpc_{\bm{b}}(P_{M, c \cdot\bm{1}}) \cdot \prod_{i = 1}^m\frac{(n \norm{\bm{b}}_1) ^{n \norm{\bm{b}}_1 }}{(1 + |n \norm{\bm{b}}_1|)^{1 + n \norm{\bm{b}}_1}},
    \end{align*}
    where $\sgn(r) = 1$ if $r > 0$ and $\sgn(r) = -1$ otherwise. Using the inequality $\frac{k^k}{(1 + k)^{1 + k}} \ge \frac{1}{e(1+ k )}$ for $k>0$ we obtain:  
    \begin{align*}
    B (M,\bm{b}) & \ge \cpc_{\bm{b}}(P_{M, c \cdot\bm{1}}) \cdot \prod_{i = 1}^m\frac{(n \norm{\bm{b}}_1 )^{n \norm{\bm{b}}_1 }}{(1 + n \norm{\bm{b}}_1)^{1 + n \norm{\bm{b}}_1}}\\
    & \ge \cpc_{\bm{b}}(P_{M, c \cdot\bm{1}})\cdot \prod_{i = 1}^m \exp({-1 - \log(1 + n \norm{\bm{b}}_1 ))}\\
    &=\cpc_{\bm{b}}\left( P_{M,c\cdot \bm{1}}\right)\cdot\exp\left(-m -m \log( 1 + n \norm{\bm{b}}_1) \right).
    \end{align*}
\end{proof}
If $\norm{\bm{b}}_1$ is reasonably large so that $m + m \log(1 + n\norm{\bm{b}}_1) = O(\norm{\bm{b}}_1)$, one could rewrite the bound of \Cref{lower bound for tu matrices a step towards simply exponential bounds} as a simply exponential bound in $\norm{\bm{b}}_1$. The following corollary establishes a bound for $\norm{\bm{b}}_1$ that gives us such a simply exponential bound for $B(M,\bm{b})$.
\begin{corollary}
\label{simply exponential lower bound for tu matrices with large enough b}
    Suppose $m,n$ are positive integers satisfying $\max\{ m,n\} \ge 3$.
    Given a TU matrix $M \in \{ 0,  \pm 1\}^{m \times n}$ with finite fibers and some $\bm{b} \in \Z^m$ satisfying $\norm{\bm b}_1 \ge 6m \cdot \log(\max\{ m,n\})$, we have: 
    \[
     B(M,\bm{b}) \ge \cpc_{\bm{b}}(P_{M, \norm{\bm{b}}_1\cdot \bm{1}}) \cdot \exp\left(- \norm{\bm{b}}_1\right) 
     \]
\end{corollary}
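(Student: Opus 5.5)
This follows directly from \Cref{lower bound for tu matrices a step towards simply exponential bounds}. That result gives
\[
B(M,\bm b) \ge \cpc_{\bm b}(P_{M,\norm{\bm b}_1\cdot\bm 1})\cdot \exp\left(-m - m\log(1+n\norm{\bm b}_1)\right),
\]
so it suffices to show that $m + m\log(1+n\norm{\bm b}_1) \le \norm{\bm b}_1$ whenever $c := \norm{\bm b}_1 \ge 6m\log N$, where $N := \max\{m,n\}\ge 3$. This is a purely analytic inequality, and the whole proof reduces to it.

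First I would simplify using $1 + nc \le 2nc \le 2Nc$ (valid since $c\ge 1$, which holds because $6m\log 3 > 1$). It is then enough to prove
\[
m\left(1 + \log 2 + \log N + \log c\right) \le c .
\]
The function $g(c) = c - m\log c$ is increasing for $c \ge m$, and $6m\log N \ge 6m\log 3 > m$. So the inequality only needs checking at the endpoint $c_0 = 6m\log N$.

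At $c_0$, divide by $m$. The target becomes
\[
1+\log 2 + \log N + \log(6m\log N) \le 6\log N .
\]
Use $\log m \le \log N$ to bound the left side by $1 + \log 12 + 2\log N + \log\log N$. It then remains to show
\[
1 + \log 12 + \log\log N \le 4\log N \qquad\text{for } N\ge 3 .
\]
Set $t=\log N \ge \log 3 \approx 1.0986$. The inequality $4t - \log t \ge 1+\log 12 \approx 3.485$ holds at $t=\log 3$, since there $4t - \log t \approx 4.39 - 0.094 \approx 4.30$. Moreover $4t-\log t$ is increasing for $t\ge 1/4$, so the inequality holds for all $t \ge \log 3$.

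Combining these steps gives $\exp(-m - m\log(1+nc)) \ge e^{-c}$, which yields the claimed bound. The only mildly delicate step is the constant bookkeeping in the endpoint check, in particular making sure the $\log 2$ slack from $1+nc\le 2nc$ is not lost. The hypothesis $\max\{m,n\}\ge3$ is exactly what makes $\log N$ large enough to absorb the additive constants.
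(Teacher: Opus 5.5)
Your proof is correct and is essentially the paper's argument. Both reduce via the preceding corollary to $m + m\log(1+n\|\bm b\|_1)\le \|\bm b\|_1$, use $1+nc\le 2nc$ together with the monotonicity of $z\mapsto z-m\log z$ for $z>m$ to check only the endpoint $c=6m\log N$, and your final inequality $1+\log 12+\log\log N\le 4\log N$ is exactly the paper's $q^4\ge 12e\log q$ after exponentiating.
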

\begin{proof}
    Apply \Cref{lower bound for tu matrices a step towards simply exponential bounds} to $M$ and $\bm{b}$. We just have to check that:
    \[
    \norm{\bm{b}}_1 \ge m + m \log ( 1 + n \norm{b}_1),
    \]
    or the following stronger inequality: 
    \[
    \norm{\bm{b}}_1 - m \log( \norm{\bm{b}}_1) \ge m + m \log(2) + m \log(n) 
    \]
    The function $z \mapsto z - m \log(z)$ is increasing for $z > m$, therefore it suffices to check the above inequality for $\norm{\bm{b}}_1 = 6 m\cdot \log(q)$ where $q = \max\{m,n\}$. Now we only need to show that: 
    \[
    6\log q \ge 1 + \log(2) +  \log(q) +  \log(6q\log q),
    \]
    or equivalently that $5 \log q \ge 1 + \log(2)+ \log (6q \log q)$. Exponentiate both hand sides of this inequality to get the equivalent statement $q^4 \ge 12e \log q$, which holds for any $q \ge 3$. 
\end{proof}

\subsection{Special cases: non-negative partial row sums}
\label{subsection: capacity bound for special cases}
In \Cref{subsection: general capacity bounds via certificate of boundedness}, we employed capacity bounds for $\LC$ polynomials stated in \Cref{capacity bound for LC polynomials} alongside the dual characterization of $ \max \Deg_{i}^{\bm{b}}(P_{M, \bm k})$ and $\Deg_{i}^{\bm{b}}(P_{M, \bm k})$ to find capacity bounds for $B(M, \bm b)$ when $M$ is a general TU matrix. Specifically, we use feasible points with $\bm w = \bm 0$ for the dual LPs of \Cref{tight bounds for beta}. 

In this section, rather than placing all of the weight of the choice of the feasible points on $\bm{z}$ by taking $\bm{w} = \bm{0}$, we will instead let $\bm{z} = \bm{0} \in \Z^n$ and search for a feasible $\bm{w} \in \R^{m-i}$. This will not be always possible, however if that $M$ has non-negative partial row sums and the sum of all of its rows is the $\bm 0$ vector, such a $\bm w$ always exists. In this case, we obtain a lower bound for $B(M,\bm b)$ stated in \Cref{lower bound for TU with nonnegative partial row sums no k}. We can slightly improve this bound when $M$ is the incidence matrix of a directed acyclic graph to recover \cite[Theorem 1.1]{LMY26}, see \Cref{bound for flow polytopes}. Lastly, we will state a lower bound for $B(M,\bm b)$ when $M$ is a TU matrix with non-negative partial row sums in \Cref{final lower bound for TU matrices with nonnegative partial row sums}. 

\begin{theorem}
    \label{lower bound for matrices with nonnegative partial sums zero row sum using k}
    Suppose $M \in \{0,\pm 1\}^{m \times n}$ and is a TU matrix and $\bm{k} \in \Z^n$. Let $\bm{s}_i = \bm{m}_{(1)} + \cdots + \bm{m}_{(i)}$ be the sum of the first $i$ rows of $M$ for $i \in [m]$. Assume that $\bm{s}_i \ge \bm{0}$ for any $i \in [m-1]$ and $\bm{s}_m = \bm{0}$. Then :
    \[
    B_{(\bm{0},\bm{k})}(M,\bm{b})\ge \cpc_{\bm{b}}(P_{M, \bm{k}}) \cdot \prod_{i = 1}^{m-1} \frac{|\sigma_i|^{|\sigma_i|}}{(1 + |\sigma_i|)^{1 + |\sigma_i|}}.
    \]
   where $\sigma_i = b_1 + \cdots + b_i$ for $i \in [m]$.
\end{theorem}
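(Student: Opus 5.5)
The plan is to apply \Cref{capacity bound for LC polynomials} to $P_{M,\bm{k}}$, which is $\LC$ by \Cref{k-solution Laurent polynomial of tu matrices are lc}. The only work is to choose, for each $i\in[m]$, a bound $\beta_i$ on $\Deg_i^{\bm b}(P_{M,\bm k})$ with $|\beta_i-b_i|$ controlled by the partial sums $\sigma_j$. Following the discussion before this subsection, I will take the dual feasible point of \Cref{tight bounds for beta} with $\bm z=\bm 0$ and put all the weight on $\bm w$.

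First, a reduction. Since $\bm s_m=\bm 0$, every monomial $\bm x^{M\bm y}$ of $P_{M,\bm k}$ has exponent-sum $\bm 1^T M\bm y=\bm s_m^T\bm y=0$. So $\newt(P_{M,\bm k})$ lies in the hyperplane $\{\bm\alpha:\sum_j\alpha_j=0\}$. If $\sigma_m\neq 0$, then $\bm b\notin\newt(P_{M,\bm k})$, so $\cpc_{\bm b}(P_{M,\bm k})=0$ by \Cref{capacity is nonzero iff alpha is in newton polytope}, and the inequality is trivial. Hence I assume $\sigma_m=0$, i.e.\ $b_{i}+\cdots+b_m=-\sigma_{i-1}$ for every $i$, with $\sigma_0:=0$.

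Next, the upper bounds. For $i\in[m-1]$, take $\bm w=-\bm 1\in\R^{m-i}$ and $\bm z=\bm 0$ in the first LP of \Cref{tight bounds for beta}. Then
\[
(M_{[i+1,m]})^T\bm w=-(\bm s_m-\bm s_i)=\bm s_i=\bm s_{i-1}+\bm m_{(i)}\ge \bm m_{(i)},
\]
using $\bm s_{i-1}\ge\bm 0$ (trivially so for $i=1$). This point is therefore feasible, and it gives the upper bound
\[
\beta_i=-(b_{i+1}+\cdots+b_m)=\sigma_{i-1}+b_i .
\]
For $i=m$, \Cref{tight bounds for beta} does not apply, so I argue directly: the $x_m$-degree of any monomial is $\bm m_{(m)}^T\bm y=-\bm s_{m-1}^T\bm y\le 0$ for $\bm y\ge\bm 0$. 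I take $\beta_m=0$, and then $|\beta_m-b_m|=|b_m|=|\sigma_{m-1}|$. This agrees with the general formula $\beta_m=\sigma_{m-1}+b_m$ once $\sigma_m=0$ is used.

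Finally, plug into \Cref{capacity bound for LC polynomials}. We have $|\beta_i-b_i|=|\sigma_{i-1}|$, so $i=1$ contributes $\frac{0^0}{1^1}=1$. The remaining indices $i=2,\dots,m$ contribute exactly $\prod_{j=1}^{m-1}\frac{|\sigma_j|^{|\sigma_j|}}{(1+|\sigma_j|)^{1+|\sigma_j|}}$, which is the claimed bound.

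The main point needing care is the bookkeeping in the definition of $\Deg_i^{\bm b}$. The bounds must be valid for the coefficient extraction $[x_{i+1}^{b_{i+1}}\cdots x_m^{b_m}]P_{M,\bm k}$, which is exactly what \Cref{tight bounds for beta} encodes, and the case $i=m$ must be handled separately. I expect the reduction to $\sigma_m=0$ and this index shift to be the only subtle points; the feasibility check itself is immediate from the hypotheses $\bm s_i\ge\bm 0$ and $\bm s_m=\bm 0$.
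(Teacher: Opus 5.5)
Your proposal is correct and is essentially the paper's proof: you use the same dual feasible point $\bm w=-\bm 1$, $\bm z=\bm 0$ in \Cref{tight bounds for beta} to get the upper bound $\sigma_i$ for $i\in[m-1]$, the same bound $0$ for $x_m$ via $\bm m_{(m)}=-\bm s_{m-1}\le\bm 0$, and the same telescoping into $\prod_{i=1}^{m-1}$. The only difference is cosmetic. You reduce to $\sigma_m=0$ directly from $\newt(P_{M,\bm k})$ lying in the hyperplane $\sum_j\alpha_j=0$, whereas the paper uses support saturation. Your version is fine because \Cref{capacity bound for LC polynomials} holds for any $\bm\alpha\in\Z^m$, and weak duality alone justifies the bounds.
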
 
\begin{proof} 
If $\bm{b} \not \in \supp(P_{M, \bm{k}})$, then $\cpc_{\bm{b}}(P_{M, \bm{k}}) = 0$ by \Cref{support is saturated for LC polynomials} and \Cref{capacity is nonzero iff alpha is in newton polytope}, and the statement is trivial. Otherwise if there exists $\bm{0} \le \bm{y} \le \bm{k}$ satisfying $M \bm{y} = \bm{b}$, we should have $\sigma_m = 0$ since $\bm{s}_m = \bm{0}$.

Let $i \in [m-1]$. Since $ \bm{m}_{(i)} + \cdots + \bm{m}_{(m)} =  \bm{s}_m - \bm{s}_{i-1} \le \bm 0$ , the vector $\bm{w} = -\bm{1}\in \Z^{m-i}$ satisfies:
\[
(M_{[i+1,m]})^T\bm{w}\ge \bm m_{(i)}.
\]
So $-\bm{b}_{[i+1,m]}^T \bm{1} = \sigma_i - \sigma_m = \sigma_i$ is an upper bound for the set $\Deg_{i}^{\bm{b}}(P_{M, \bm{k}})$ by \Cref{tight bounds for beta}. Moreover, $\bm{m}_{(m)} = - \bm{s}_{(m-1)} \le \bm{0}$, thus $0$ is an upper bound for the set of degrees of $x_m$ appearing in $P_{M , \bm{k}}(\bm{x})$. By \Cref{capacity bound for LC polynomials}, we get:
\begin{align*}
B_{(\bm{0},\bm{k})}(M,\bm{b}) &\ge \cpc_{\bm{b}}(P_{M, \bm{k}}) \cdot \frac{|-b_m|^{|-b_m|}}{(1 + |-b_m|)^{1 + |-b_m|}}\cdot\prod_{i = 1}^{m-1} \frac{|\sigma_i - b_i|^{|\sigma_i- b_i|}}{(1 + |\sigma_i- b_i|)^{1 + |\sigma_i- b_i|}}\\
&= \cpc_{\bm{b}}(P_{M, \bm{k}}) \cdot \frac{|\sigma_{m-1}|^{|\sigma_{m-1}|}}{(1 + |\sigma_{m-1}|)^{1 + |\sigma_{m-1}|}}\cdot \prod_{i = 1}^{m-2} \frac{|\sigma_i |^{|\sigma_i|}}{(1 + |\sigma_i|)^{1 + |\sigma_i|}}\\
& = \cpc_{\bm{b}}(P_{M, \bm{k}}) \cdot \prod_{i = 1}^{m-1} \frac{|\sigma_i |^{|\sigma_i|}}{(1 + |\sigma_i|)^{1 + |\sigma_i|}},
\end{align*}
proving the desired bound.
\end{proof}

We can apply \Cref{lower bound for matrices with nonnegative partial sums zero row sum using k} with a large enough $\bm{k}$ to bound $B(M,\bm{b})$. In fact, we can take the limit of the bound of \Cref{lower bound for matrices with nonnegative partial sums zero row sum using k} as $\bm{k} \to \infty$, by which we mean that every entry of $\bm k$ tends to infinity. If the lower bound converges as $\bm k \to \infty$, we get a bound for $B(M,\bm b)$.

\begin{lemma}
\label{fibers are finite if partial row sums are nonnegative}
    Suppose $M$ is an $m \times n$ TU matrix such that $\bm{s}_i \ge \bm{0}$ for any $i \in [m]$. If $M$ does not have an all zeros column, then all the fibers $\{ \bm{y} \in \Z_{\ge 0}^n: M \bm{y} = \bm{b}\}$ are finite. 
\end{lemma}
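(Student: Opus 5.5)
The plan is to reduce finiteness of every fiber to the statement that the recession cone $\{\bm{y} \ge \bm 0 : M\bm{y} = \bm 0\}$ is trivial. If $\bm{y} \ge \bm 0$, $\bm y \neq \bm 0$, and $M\bm{y} = \bm 0$, then the polyhedron $\{\bm y \ge \bm 0: M\bm y = \bm b\}$ is unbounded whenever it is nonempty. Conversely, if the cone is $\{\bm 0\}$, every such polyhedron is bounded, so it contains only finitely many lattice points. It therefore suffices to show that $\bm{y} \ge \bm 0$ and $M\bm{y} = \bm 0$ force $\bm{y} = \bm 0$. Total unimodularity is not really needed here, only the sign conditions on the $\bm s_i$.

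Suppose $\bm{y} \ge \bm 0$ and $M\bm y = \bm 0$. Then for every $i \in [m]$ we have $\bm{s}_i^T \bm{y} = \sum_{\ell \le i} (M\bm y)_\ell = 0$. Since $\bm{s}_i \ge \bm 0$ and $\bm y \ge \bm 0$, each term of $\bm s_i^T\bm y = \sum_j (\bm s_i)_j y_j$ is nonnegative. Hence $(\bm s_i)_j y_j = 0$ for all $i, j$.

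Now fix $j$ with $y_j > 0$. The previous step gives $(\bm s_i)_j = 0$ for all $i \in [m]$. Taking successive differences, $M_{i,j} = (\bm s_i)_j - (\bm s_{i-1})_j = 0$ for all $i$, with $\bm s_0 := \bm 0$. So column $j$ is the zero column, contradicting the hypothesis. Therefore $\bm y = \bm 0$.

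Finally, conclude finiteness. The set $P = \{\bm y \ge \bm 0 : M\bm y = \bm b\}$ is a polyhedron whose recession cone is $\{\bm y \ge \bm 0: M\bm y = \bm 0\} = \{\bm 0\}$. By the standard Minkowski–Weyl decomposition, $P$ is then bounded, and a bounded set contains finitely many integer points.

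The only point that needs a little care is this last step. To avoid citing the decomposition theorem, one can argue directly. If $P$ contained points $\bm y^{(t)}$ with $\|\bm y^{(t)}\|_1 \to \infty$, the normalized vectors $\bm y^{(t)}/\|\bm y^{(t)}\|_1$ would have a limit point $\bm v$ with $\bm v \ge \bm 0$, $\|\bm v\|_1 = 1$, and $M\bm v = \lim \bm b/\|\bm y^{(t)}\|_1 = \bm 0$. This is a nonzero element of the cone, contradicting the previous paragraphs. In fact the argument also gives an explicit bound: writing $c = \min_j \max_i (\bm s_i)_j \ge 1$, we get $\sum_j y_j \le \sum_i \bm s_i^T \bm y = \sum_i \sigma_i$. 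This bounds $P$ directly, and I would mention it as an alternative.
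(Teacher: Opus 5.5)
Your proof is correct, and its core is the same as the paper's: a nonzero $\bm v \ge \bm 0$ with $M\bm v = \bm 0$ gives $\bm s_i^T\bm v = 0$ with all terms nonnegative, which forces any column $j$ with $v_j > 0$ to vanish. The difference is only in how you obtain such a $\bm v$. The paper takes $\bm v = \bm z - \bm y$ for two comparable lattice points of an infinite fiber, using Dickson's lemma, while you show the real polyhedron is bounded by a normalization and compactness argument. Your explicit bound $\|\bm y\|_1 \le \sum_i \sigma_i$ is a nice quantitative addition that makes the argument fully direct.
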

\begin{proof}
    Suppose not, and let $\{ \bm{y} \in \Z_{\ge 0}^n: M \bm{y} = \bm{b}\}$ be infinite. Then by Dickson's lemma, there exists two distinct integer vectors $\bm{z} \ge \bm{y}$ satisfying $M \bm y = M \bm z = \bm b$. Thus, there exists some nonzero vector $\bm{v} \in \R_{\ge 0}^n$ satisfying $M \bm{v} = \bm{0}$. Let $j \in [n]$ be an index such that $v_j > 0$. We have:  
    \[
    \begin{bmatrix}
        \bm{1}_{i} \\\bm{0}_{m-i}
    \end{bmatrix}^T M \bm{v} = \bm{s}_{(i)}^T \bm{v} = 0, \quad\forall i \in [m]
    \]
    where $\bm{1}_{i} \in \Z^i$ is the all ones vector and $\bm{0}_{m-i} \in \Z^{m-i}$ the all zeros vector. Note that $\bm{s}_{(i)} , \bm{v} \ge \bm{0}$, so we must have $\bm{s}_{(i), j} = 0$ for any $i \in [m]$. That is: 
    \[
    \bm{s}_{(1),j} = \bm{s}_{(2),j} = \cdots = \bm{s}_{(m),j},
    \]
    which means that the $j$-th column of $M$ is a zero column, contradicting our assumption. 
\end{proof}
Similar to the terminology of \cite{LMY26}, for a Laurent series $f(\bm x)$ with non-negative coefficients, we define its domain of convergence as $\Omega_f = \{\bm x>\bm 0: f(\bm x) \text{ converges } \}$, and define its capacity as $\cpc_{\bm {\alpha}} (f) = \inf_{\bm x \in \Omega_f}\frac{f(\bm x)}{\bm x^{\bm \alpha}}$. If we define $\bm f(\bm x) = \infty$ for any $\bm x \not \in \Omega_f$, this definition of capacity agrees with the earlier definition $\cpc_{\bm \alpha}(f) = \inf_{\bm x > \bm 0} \frac{f(\bm x)}{\bm x^{\bm \alpha}}$.

Let $\bm a_1, \cdots , \bm a_n$ be the columns of $M$ and define:
\[
P_M(\bm y) : =\sum_{\bm{y} \ge \bm{0}} \bm{x}^{M \bm{y}}  = \prod_{j = 1}^n \sum_{\ell \ge 0}\bm x^{\ell \bm a_j}.
\]
Suppose $M$ is an $m\times n$ TU matrix and has finite fibers, i.e.  $\{\bm y\in \Z_{\ge 0}^n: M \bm y = \bm b \}$ is finite for any $\bm b \in \Z^m$. This holds, for example, when $M$ has non-negative partial row sums and no zero columns (see \Cref{fibers are finite if partial row sums are nonnegative}). Then $P_M$ is a well defined Laurent series with the domain of convergence $\Omega = \{ \bm{x} \in \R^m_{>0}: \bm x^{\bm a_j} < 1 \quad \forall j \in [n]\}$. Moreover, $\Omega$ is a nonempty set. By Motzkin's transposition theorem \cite[Theorem 7.17]{Gul10}, there exists some $\bm w$ such that $\bm w^T \bm a_j >0 $ for any $j \in [n]$ since $M$ has finite fibers. It follows that $e^{-r\bm w} \in \Omega$ for any $r > 0$. More precisely, the domain of convergence of each $\sum_{\ell \ge 0} \bm x^{\ell \bm a_j}$ is open and contains $e^{-r \bm w}$. 

\begin{lemma}[{\cite[Proposition 6.4]{LMY26}
}]
\label{capacity of products}
    Suppose $f_1(\bm x), \cdots, f_n(\bm x)$ are all Laurent series with non-negative coefficients. Let $\Omega_i \neq \emptyset$ be the domain of convergence of $f_i$, and suppose $\cap_{i = 1}^n \innt\Omega_i \neq \emptyset$. Then: 
    \[
    \cpc_{\bm {\alpha}}\left( \prod_{i =1 }^n f_i\right) = \sup_{\bm \alpha_{(1)} + \cdots + \alpha_{(n)}= \bm \alpha} \prod_{i = 1}^n \cpc_{\bm \alpha_{(i)}}(f_i) .\]
    In fact, after taking the natural logarithm of both hand sides, the two programs are Fenchel primal-dual. 
\end{lemma}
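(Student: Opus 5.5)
We plan to prove the identity by taking logarithms and treating it as an instance of Fenchel duality (equivalently, the infimal-convolution formula for conjugates). Substitute $\bm x = e^{\bm t}$ and define $g_i(\bm t) = \log f_i(e^{\bm t})$, set to $+\infty$ outside $\log \Omega_i$. Since each $f_i$ has non-negative coefficients, $g_i$ is a log-sum-exp of affine functions, hence convex and lower semicontinuous (by Fatou/monotone convergence of non-negative series) as an extended-real function. In this notation
\[
\log \cpc_{\bm\alpha}(f_i) = \inf_{\bm t}\bigl(g_i(\bm t) - \langle \bm\alpha, \bm t\rangle\bigr) = -g_i^*(\bm\alpha),
\]
where $g_i^*$ is the convex conjugate. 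Likewise, the logarithm of the left-hand side is $-(g_1+\cdots+g_n)^*(\bm\alpha)$, because $\log\prod f_i = \sum g_i$ pointwise (with the convention $\infty$ off the common domain). The claimed identity is then exactly
\[
\Bigl(\sum_i g_i\Bigr)^*(\bm\alpha) = \inf_{\bm\alpha_{(1)}+\cdots+\bm\alpha_{(n)}=\bm\alpha} \sum_i g_i^*(\bm\alpha_{(i)}),
\]
where the infimum must be shown to be attained or at least equal (so that the $\sup$ of the products matches), with the conventions $\log 0 = -\infty$.

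The proof has two steps. First, the easy inequality $\cpc_{\bm\alpha}(\prod f_i) \ge \prod \cpc_{\bm\alpha_{(i)}}(f_i)$ holds directly: for any decomposition and any $\bm x$ in the common domain, $\prod_i f_i(\bm x)\bm x^{-\bm\alpha} = \prod_i f_i(\bm x)\bm x^{-\bm\alpha_{(i)}} \ge \prod_i \cpc_{\bm\alpha_{(i)}}(f_i)$. We then take the infimum over $\bm x$ and the supremum over decompositions. Second, for the reverse inequality we invoke the standard theorem (Rockafellar, Thm.~16.4/20.1) that for proper closed convex $g_1,\dots,g_n$ with $\bigcap_i \mathrm{ri}(\mathrm{dom}\, g_i) \neq \emptyset$, the conjugate of the sum equals the infimal convolution of the conjugates, with the infimum attained when finite. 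The hypothesis $\bigcap_i \innt \Omega_i \ne \emptyset$ translates, through the homeomorphism $\bm t \mapsto e^{\bm t}$, into a common interior point of the domains $\mathrm{dom}\, g_i$, which is stronger than the relative-interior condition. Exponentiating then gives the identity. In the case where the left side is $0$ (conjugate $+\infty$), both sides are $0$ and nothing needs proving.

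The main obstacle is checking the hypotheses of the conjugate-sum theorem for Laurent series rather than polynomials. Properness of each $g_i$ needs the domain of convergence to be nonempty, which is given. Closedness must be checked at boundary points of $\log\Omega_i$, where the series may converge or diverge. Our first approach is to note that $g_i$ is the supremum of the continuous convex functions $\log$ of partial sums, and is therefore automatically lower semicontinuous. This avoids any analysis of the boundary of the convergence region.
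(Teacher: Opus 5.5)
Your proposal is correct and follows essentially the route the paper indicates: the paper does not reprove this lemma but cites \cite[Proposition 6.4]{LMY26} and remarks that after taking logarithms the two sides are Fenchel primal--dual, which is exactly your argument (the easy inequality by splitting $\bm x^{-\bm\alpha}$, and the reverse inequality from the conjugate-of-a-sum formula, Rockafellar's Theorem 16.4, under the common-interior hypothesis). As minor remarks, that theorem needs only properness and the relative-interior condition, not closedness, so your lower-semicontinuity check is harmless but unnecessary; and the degenerate case of a zero series $f_i$, where $g_i \equiv -\infty$ is improper, is trivially true because both sides are $0$.
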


In our case, this implies a particularly nice expression for the capacity of $P_M$. We can also obtain a conceptually similar expression for the capacity of $P_{M,\bm{k}}$, which we discuss below.

\begin{corollary} \label{cap-sup-polytope}

    Fix an $m \times n$ TU matrix $M$ with finite fibers and an integer vector $\bm{b} \in \Z^m$. Letting $\mathcal{B}{(M,\bm{b})} = \{\bm{y} \in \R_{\ge 0}^n : M\bm{y} = \bm{b}\}$, we have 
    \[
        \cpc_{\bm{b}}(P_M) = \sup_{\bm{s} \in \mathcal{B}{(M,\bm{b})}} \prod_{j=1}^n \frac{(s_j+1)^{s_j+1}}{s_j^{s_j}}.
    \]
\end{corollary}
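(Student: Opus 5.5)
The plan is to apply \Cref{capacity of products} to the factorization $P_M(\bm x)=\prod_{j=1}^n f_j(\bm x)$ with $f_j(\bm x)=\sum_{\ell\ge 0}\bm x^{\ell\bm a_j}$, and then compute each factor's capacity $\cpc_{\bm\alpha}(f_j)$ explicitly.

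\emph{Hypotheses of the lemma.} Each $f_j$ has nonnegative coefficients. Its domain of convergence is $\Omega_j=\{\bm x>\bm 0:\bm x^{\bm a_j}<1\}$, which is open. As noted before \Cref{capacity of products}, finite fibers together with Motzkin's transposition theorem give a vector $\bm w$ with $\bm w^T\bm a_j>0$ for all $j$, so $e^{-r\bm w}\in\bigcap_j\innt\Omega_j$ for every $r>0$. Finite fibers also force $\bm a_j\neq\bm 0$ for every $j$: otherwise $\bm y+\ell\bm e_j$ would give infinitely many points in a nonempty fiber. The lemma then yields
\[
\cpc_{\bm b}(P_M)=\sup_{\bm\alpha_{(1)}+\cdots+\bm\alpha_{(n)}=\bm b}\ \prod_{j=1}^n\cpc_{\bm\alpha_{(j)}}(f_j).
\]

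\emph{Capacity of a single factor.} We compute $\cpc_{\bm\alpha}(f_j)$ in two cases.

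\textbf{Case 1: $\bm\alpha=s\bm a_j$ with $s\ge 0$.} Set $t=\bm x^{\bm a_j}$. Then $\bm x^{-\bm\alpha}f_j(\bm x)=t^{-s}/(1-t)$. As $\bm x$ ranges over $\Omega_j$, $t$ ranges over all of $(0,1)$, because $\bm a_j\ne\bm 0$. The function $t\mapsto t^{-s}/(1-t)$ on $(0,1)$ is minimized at $t=s/(s+1)$, where it equals $(s+1)^{s+1}/s^s$. When $s=0$ we use the convention $0^0=1$, and the infimum $1$ is approached as $t\to 0$.

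\textbf{Case 2: $\bm\alpha$ is not of the form $s\bm a_j$ with $s\ge 0$.} We show $\cpc_{\bm\alpha}(f_j)=0$, either via \Cref{capacity is nonzero iff alpha is in newton polytope} or directly.
\begin{itemize}
\item If $\bm\alpha\notin\R\bm a_j$, pick $\bm u$ with $\bm u^T\bm a_j=0$ and $\bm u^T\bm\alpha>0$. Then $\bm x=\bm x_0e^{\lambda\bm u}$ keeps $t$ fixed while $\bm x^{-\bm\alpha}\to 0$ as $\lambda\to\infty$.
\item If $\bm\alpha=s\bm a_j$ with $s<0$, then $t^{-s}/(1-t)\to 0$ as $t\to 0$.
\end{itemize}

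\emph{Assembling.} In the supremum only decompositions with $\bm\alpha_{(j)}=s_j\bm a_j$ and $s_j\ge 0$ contribute nonzero products. Such a decomposition is exactly a choice of $\bm s\ge\bm 0$ with $M\bm s=\sum_j s_j\bm a_j=\bm b$, that is, $\bm s\in\mathcal B(M,\bm b)$. Substituting the values from Case 1 gives
\[
\cpc_{\bm b}(P_M)=\sup_{\bm s\in\mathcal B(M,\bm b)}\ \prod_{j=1}^n\frac{(s_j+1)^{s_j+1}}{s_j^{s_j}}.
\]
If $\mathcal B(M,\bm b)$ is empty, both sides are $0$, with the empty supremum read as $0$; this is consistent with \Cref{capacity is nonzero iff alpha is in newton polytope}.

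The only delicate point is checking the hypotheses of \Cref{capacity of products}, namely the common interior point and the nonvanishing of the columns $\bm a_j$. The rest is the one-variable calculus in Case 1.
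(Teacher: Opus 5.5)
Your proposal is correct and follows the paper's proof: apply \Cref{capacity of products} to the factorization of $P_M$, discard decompositions in which some $\bm\alpha_{(j)}$ is not a nonnegative multiple of $\bm a_j$ (the paper does this via \Cref{capacity is nonzero iff alpha is in newton polytope}), and evaluate each remaining factor as the capacity of a univariate geometric series. The differences are only in detail. You explicitly verify the lemma's hypotheses, including $\bm a_j\neq\bm 0$, and you handle the empty case. You also compute $\inf_{t\in(0,1)} t^{-s}/(1-t)=(s+1)^{s+1}/s^s$ by hand, where the paper changes variables and cites \cite[Example 4.16]{LMY26}.
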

\begin{proof}
    Let $\bm a_1, \cdots, \bm a_n \in \Z^m$ be the columns of $M$. By \Cref{capacity of products} we can write: 
    \[
    \cpc_{\bm b}(P_{M}) = \sup_{\bm b_1 + \cdots + \bm b_n = \bm b} \prod_{j = 1}^n \cpc_{\bm b_j} \left( \sum_{\ell \ge 0} \bm x^{\ell \bm a_j}\right).
    \]
    We can further restrict to the case where $\bm b_j$ is a non-negative multiple of $\bm a_j$ by \Cref{capacity is nonzero iff alpha is in newton polytope}, otherwise, $\cpc_{\bm b_j}\left( \sum_{\ell \ge 0}\bm x^{\ell \bm a_j} \right) = 0$. It follows that: 
    \[
    \cpc_{\bm b}(P_{M}) = \sup_{\bm s \in \mathcal{B}(M, \bm b)} \prod_{j = 1}^n \cpc_{s_j \bm a_j} \left( \sum_{\ell \ge 0} \bm x^{\ell \bm a_j}\right).
    \]
    One can see that $\cpc_{s_j \bm a_j} \left( \sum_{\ell \ge 0} \bm x^{\ell \bm a_j}\right) = \cpc_{\bm s_j} \left(\sum_{\ell \ge 0} x^{\ell}\right)$ by a change of variables. We further have $\cpc_{s_j}\left(\sum_{\ell \ge 0} x^{\ell}\right) = \frac{(s_j + 1)^{s_j + 1}}{s_j^{s_j}}$ by \cite[Example 4.16]{LMY26}, completing the proof. 
\end{proof}

We note that there is a similar expression for \Cref{cap-sup-polytope} in the case of the polytope $\mathcal{B}_{\bm{0}, \bm k}{(M,\bm{b})} = \{\bm{y} \in \R^n : M\bm{y} = \bm{b}, \, \bm{0} \leq \bm{y} \leq \bm{k}\}$. In that case, the capacity of $P_{M,\bm{k}}$ is the $\sup$ over $\bm{s} \in \mathcal{B}_{\bm{0}, \bm k}{(M,\bm{b})}$ of the product of $e^{\mathcal{H}_{\{0,\ldots,k_j\}}(s_j)}$, where $\mathcal{H}_{\{0,\ldots,k_j\}}(s_j)$ is the entropy of the entropy-maximizing distribution on $\{0,\ldots,k_j\}$ with expectation $s_j$ (Note that this is precisely what is given in \Cref{cap-sup-polytope}, for the geometric distribution on $\Z_{\geq 0}$ with expectation $s_j$.)

Now we can use \Cref{capacity of products} to prove that capacity is continuous under limiting $\bm{k} \to \infty$.

\begin{lemma}
    \label{cap of PMk converges to cap PM}
    Let $M$ be an $m \times n$ TU matrix with finite fibers and fix $\bm b \in  \Z^m$. Then:
    \[
    \lim_{\bm k \to \infty} \cpc_{\bm{b}}(P_{M, \bm k}) = \cpc_{\bm b} P_M.
    \]
    
\end{lemma}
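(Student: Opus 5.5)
We argue by a monotonicity (upper bound) plus an entropy/sup-formula (lower bound) sandwich. First, since all coefficients are nonnegative, $P_{M,\bm k}(\bm x) \le P_M(\bm x)$ for every $\bm x \in \Omega$, and $P_{M,\bm k}$ is coefficientwise nondecreasing in $\bm k$. Hence $\cpc_{\bm b}(P_{M,\bm k})$ is nondecreasing in $\bm k$ (in the coordinatewise order) and bounded above by $\cpc_{\bm b}(P_M)$. The infimum defining $\cpc_{\bm b}(P_M)$ is only over $\Omega$; outside $\Omega$ we set $P_M = \infty$, so the inequality holds everywhere. Thus $\limsup_{\bm k\to\infty}\cpc_{\bm b}(P_{M,\bm k}) \le \cpc_{\bm b}(P_M)$, and it remains to prove the matching $\liminf$ bound.

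For the lower bound I would use \Cref{capacity of products} for both series. Write $P_{M,\bm k} = \prod_j f_{j,k_j}$ with $f_{j,k}(\bm x) = \sum_{\ell=0}^{k}\bm x^{\ell \bm a_j}$, each a Laurent polynomial converging everywhere, so the hypothesis of \Cref{capacity of products} is trivially satisfied. This gives
\[
\cpc_{\bm b}(P_{M,\bm k}) \ge \prod_{j=1}^n \cpc_{s_j\bm a_j}(f_{j,k_j})
\]
for any $\bm s \in \mathcal{B}(M,\bm b)$ (choose the split $\bm b = \sum_j s_j \bm a_j$). By the change of variables used in \Cref{cap-sup-polytope}, $\cpc_{s_j\bm a_j}(f_{j,k}) = \cpc_{s_j}(1+x+\cdots+x^k)$, which is a univariate quantity: it equals $e^{\mathcal{H}_{\{0,\dots,k\}}(s_j)}$ when $s_j\le k$. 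The key univariate claim is that for fixed $s \ge 0$,
\[
\lim_{k\to\infty}\cpc_s(1+\cdots+x^k) = \frac{(s+1)^{s+1}}{s^s} = \cpc_s\Big(\sum_{\ell\ge0}x^\ell\Big).
\]
To prove it, note that the infimum of $x^{-s}\sum_{\ell\le k}x^\ell$ is attained at a point $x_k$ which, for $s$ fixed and $k$ large, stays in a compact subset of $(0,1)$ near the geometric optimizer $x^*=s/(s+1)$; this is immediate for $s>0$, and the case $s=0$ is trivial since both sides are $1$. On that compact set, the partial sums converge uniformly to $1/(1-x)$, so the minima converge. Alternatively, one can compare the maximum entropies on $\{0,\dots,k\}$ and on $\Z_{\ge0}$ with mean $s$ via truncated geometric distributions.

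Combining these, for each fixed $\bm s \in \mathcal{B}(M,\bm b)$ we get
\[
\liminf_{\bm k\to\infty}\cpc_{\bm b}(P_{M,\bm k}) \ge \prod_j \frac{(s_j+1)^{s_j+1}}{s_j^{s_j}}.
\]
Taking the supremum over $\bm s$ and applying \Cref{cap-sup-polytope} yields $\liminf \ge \cpc_{\bm b}(P_M)$. If $\mathcal{B}(M,\bm b)$ is empty, both sides are $0$: the limit side vanishes by \Cref{capacity is nonzero iff alpha is in newton polytope}, since $\bm b$ lies in no Newton polytope. I expect the main obstacle to be the careful justification of the univariate limit, and of the one-sided use of \Cref{capacity of products} (only the easy inequality ``$\ge$ any split'' is needed, which in fact holds directly via $\inf\prod \ge \prod\inf$, so even the full duality is unnecessary). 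The only real analytic content is therefore the truncated-geometric capacity convergence.
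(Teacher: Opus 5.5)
Your proposal is correct and follows essentially the same route as the paper. The paper also uses monotonicity for the upper bound. For the lower bound it likewise splits $\bm b=\sum_j s_j\bm a_j$, reduces each factor to the univariate truncated geometric series, passes to the limit $\cpc_{s}(1+\cdots+x^k)\to (s+1)^{s+1}/s^s$, and closes with the supremum formula of \Cref{cap-sup-polytope}. The one difference is that the paper cites this univariate limit from \cite[Lemma 5.5]{BLP23}, whereas you sketch a direct minimizer-localization proof, which is valid.

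One small caveat: your remark that full duality is unnecessary applies only to the $P_{M,\bm k}$ side. The nontrivial direction of \Cref{capacity of products} is still needed in your final step. It is exactly what \Cref{cap-sup-polytope} uses to identify $\sup_{\bm s}\prod_j (s_j+1)^{s_j+1}/s_j^{s_j}$ with $\cpc_{\bm b}(P_M)$.
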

\begin{proof}
Observe that $P_M(\bm x) \ge P_{M, \bm k_1}(\bm x) \ge P_{M, \bm {k}_2}(\bm x)$ for any $\bm x>\bm 0$ and $\bm k_1 \ge \bm k_2$ (for $\bm x \not \in \Omega$, let $P_M(\bm x) = \infty$), so $\cpc_{\bm b} (P_{M}) \ge \cpc_{\bm b}(P_{M,  \bm k}) \ge 0$ for any $\bm k$. Since $\cpc_{\bm b}(P_{M,\bm k})$ is monotone in $\bm k$, the limit $\lim_{\bm k \to \infty}\cpc_{\bm b}(P_{M,\bm k})$ exists and $\cpc_{\bm b}(P_M) \ge \lim_{\bm k \to \infty}\cpc_{\bm b}(P_{M,\bm k})$.

For the other direction of the proof, let $\bm a_1, \cdots, \bm a_n \in \Z^m$ be the columns of $M$. 
By \Cref{capacity of products} and \Cref{capacity is nonzero iff alpha is in newton polytope}:
\begin{align*}
\cpc_{\bm b}(P_{M, \bm k}) &= \sup_{\bm s \in \mathcal{B}(M, \bm b)} \prod_{j = 1}^n \cpc_{s_j\bm a_j} \left( \sum_{\ell = 0}^{k_j} \bm x^{\ell \bm a_j}\right),\\
\cpc_{\bm b}(P_{M}) &= \sup_{\bm s \in \mathcal{B}(M, \bm b)} \prod_{j = 1}^n \cpc_{s_j\bm a_j} \left( \sum_{\ell \ge 0} \bm x^{\ell \bm a_j}\right).
\end{align*}
Note that in the expression for $\cpc_{\bm b}(P_{M,\bm k})$, we can take the supremum to be over $\mathcal{B}(M, \bm b)$ instead of $\mathcal{B}_{(\bm 0 , \bm k)}(M, \bm b)$ because for any $\bm s \in \mathcal{B}(M, \bm b) \setminus \mathcal{B}_{\bm 0 , \bm k}(M, \bm b)$, the corresponding product of capacities on the right hand side is zero. 

Similar to the proof of \Cref{cap-sup-polytope}, write $\cpc_{s_j\bm a_j}( \sum_{\ell = 0}^{k_j} \bm x^{\ell \bm a_j}) = \cpc_{s_j} (\sum_{\ell = 0}^{k_j} x^{\ell})$. Therefore, for any choice of $\bm s \ge \bm 0$ such that $M \bm s = \bm b$ we have: 
\[
\cpc_{\bm b} (P_{M, \bm k}) \ge \prod_{j = 1}^n \cpc_{s_j}\left( \sum_{\ell = 0}^{k_j} x^{\ell}\right),
\]

 By \cite[Lemma 5.5]{BLP23}, $\lim_{k_j \to \infty} \cpc_{s_j} (\sum_{\ell = 0}^{k_j} x^{\ell}) = \cpc_{s_j}\left( \sum_{\ell \ge 0} x^{\ell}\right)$. Send $k_j \to \infty$ for all $j \in [n]$ and write: 
\[ \lim_{\bm k \to \infty} \cpc_{\bm b}(P_{M, \bm k}) \ge \prod_{j = 1}^n \cpc_{s_j}\left( \sum_{\ell \ge 0} x^{\ell}\right),\]
Note that this is true for any choice of $\bm s \in \mathcal{B}(M, \bm b)$, so:
\begin{align*}
    \lim_{\bm k \to \infty} \cpc_{\bm b}(P_{M, \bm k}) &\ge \sup_{\bm s \in \mathcal{B}(M, \bm b)} \prod_{j = 1}^n \cpc_{s_j}\left( \sum_{\ell \ge 0} x^{\ell}\right) \\
    &= \cpc_{\bm b}(P_M).
\end{align*}

\end{proof}
 Using \Cref{cap of PMk converges to cap PM}, we conclude the following corollary: 

\begin{corollary}
    \label{lower bound for TU with nonnegative partial row sums no k}
    Suppose $M \in \{0,\pm 1\}^{m \times n}$ is a TU matrix with no zero columns and $\bm{k} \in \Z^n$. Let $\bm{s}_i = \bm{m}_{(1)} + \cdots + \bm{m}_{(i)}$ be the sum of the first $i$ rows of $M$ for $i \in [m]$. Assume that $\bm{s}_i \ge \bm{0}$ for any $i \in [m-1]$ and $\bm{s}_m = \bm{0}$. Then :
    \[
    B(M,\bm b) \ge \cpc_{\bm{b}}(P_{M}) \cdot \prod_{i = 1}^{m-1} \frac{|\sigma_i|^{|\sigma_i|}}{(1 + |\sigma_i|)^{1 + |\sigma_i|}}.
    \]
   where $\sigma_i = b_1 + \cdots + b_i$ for $i \in [m-1]$.
\end{corollary}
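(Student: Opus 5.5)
The argument is a limit $\bm k \to \infty$ in the finite-box bound of \Cref{lower bound for matrices with nonnegative partial sums zero row sum using k}.

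First, the left-hand side is unaffected by the box once $\bm k$ is large. Since $\bm s_i \ge \bm 0$ for all $i \in [m]$ (with $\bm s_m = \bm 0$) and $M$ has no zero columns, \Cref{fibers are finite if partial row sums are nonnegative} shows that every fiber $\{\bm y \in \Z_{\ge 0}^n : M\bm y = \bm b\}$ is finite. Hence $B(M,\bm b) < \infty$. Moreover, once every entry of $\bm k$ exceeds the largest coordinate appearing in this finite fiber, we have $B_{(\bm 0,\bm k)}(M,\bm b) = B(M,\bm b)$. (Alternatively, \Cref{bound for finite fibers} gives the explicit threshold $\bm k \ge \|\bm b\|_1 \bm 1$ whenever the fiber is nonempty; if the fiber is empty, take $\bm k \ge \bm 0$ arbitrary.)

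Second, apply \Cref{lower bound for matrices with nonnegative partial sums zero row sum using k} for each such $\bm k$:
\[
B(M,\bm b) = B_{(\bm 0,\bm k)}(M,\bm b) \ge \cpc_{\bm b}(P_{M,\bm k}) \cdot \prod_{i=1}^{m-1} \frac{|\sigma_i|^{|\sigma_i|}}{(1+|\sigma_i|)^{1+|\sigma_i|}}.
\]
The product factor does not depend on $\bm k$. So we let every entry of $\bm k$ tend to infinity.

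Third, pass to the limit in the capacity. The matrix $M$ has finite fibers, so \Cref{cap of PMk converges to cap PM} applies and gives $\cpc_{\bm b}(P_{M,\bm k}) \to \cpc_{\bm b}(P_M)$. Since the left-hand side is constant for large $\bm k$, taking limits yields the claimed inequality.

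The one point needing care is the well-definedness hypotheses behind \Cref{cap of PMk converges to cap PM}: $P_M$ must be a Laurent series with a nonempty domain of convergence. The paper already verifies this for TU matrices with finite fibers using Motzkin's theorem, and finiteness of fibers here comes from \Cref{fibers are finite if partial row sums are nonnegative}. Apart from that, the proof is a direct limiting argument.
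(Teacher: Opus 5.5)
Your proposal is correct and follows the paper's own route: you use \Cref{fibers are finite if partial row sums are nonnegative} to identify $B(M,\bm b)$ with $B_{(\bm 0,\bm k)}(M,\bm b)$ for large $\bm k$, apply the finite-box bound of \Cref{lower bound for matrices with nonnegative partial sums zero row sum using k}, and pass to the limit $\bm k\to\infty$ via \Cref{cap of PMk converges to cap PM}. Your check that $P_M$ has a nonempty domain of convergence (finite fibers plus Motzkin) is the same hypothesis the paper verifies before that lemma.
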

We remark that we took a different approach in \Cref{subsection: general capacity bounds via certificate of boundedness} when working with general TU matrices. This is because it is not clear whether the lower bound of \Cref{thm: initial capacity bound for tu matrices} converges as $\bm{k} \to \infty$, or if it converges, whether it converges to a non-zero number. 

We will explore two applications of \Cref{lower bound for TU with nonnegative partial row sums no k}. We will apply it to incidence matrices of directed acyclic graphs, and to TU matrices with non-negative partial row sums. 

When $M$ is the incidence matrix of a directed acyclic graph $G$, $B(M,\bm b)$ is called the Kostant partition function for integer flows of $G$. We refer to \cite{Hum08} for a more extensive study of these quantities. The following is not a direct corollary of \Cref{lower bound for TU with nonnegative partial row sums no k}, but follows the same steps with a slightly different choice of feasible points for \Cref{tight bounds for beta}.

\begin{corollary}[{\cite[Theorem 1.1]{LMY26}}, see also \cite{LM26}]
\label{bound for flow polytopes}
    Let $G = ([n], E)$ be a directed acyclic graph with edges directed from the larger to the smaller vertex. For each $i \in [n]$, let $G_i$ be the undirected induced subgraph of $G$ on vertices $[i,n]$, and denote by $C_i$ the connected component of $i$ in $G_i$. Then: 
    \[
    B(M,\bm b) \ge \cpc_{\bm b}(P_M)\cdot \prod_{i = 2}^{n} \frac{|\sigma_i|^{|\sigma_i|}}{(1 + |\sigma_i|)^{1 + |\sigma_i|}},
    \]
    where $M \in \{0, \pm 1\}^{n \times |E|}$ is the incidence matrix of $G$ and $\sigma_i = \sum_{j \in C_i}b_j$. 
\end{corollary}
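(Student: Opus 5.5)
The plan is to follow the proof of \Cref{lower bound for matrices with nonnegative partial sums zero row sum using k}, using a different choice of dual-feasible points in \Cref{tight bounds for beta} that is adapted to the graph, and then pass to the limit $\bm k \to \infty$ as in \Cref{lower bound for TU with nonnegative partial row sums no k}.

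\textbf{Setup and finiteness.} Order the variables $x_1,\dots,x_n$ by vertices. Since $G$ is acyclic with edges directed from the larger to the smaller vertex, each column of $M$ is $e_u - e_v$ (or its negative, depending on convention) with $u>v$. The incidence matrix of a directed graph is TU. Its partial row sums $\bm s_i$ have a fixed sign entrywise and $\bm s_n=\bm 0$. So, up to a global sign convention, \Cref{fibers are finite if partial row sums are nonnegative} gives finite fibers. Hence \Cref{cap of PMk converges to cap PM} applies, and it suffices to prove
\[
B_{(\bm 0,\bm k)}(M,\bm b)\ge \cpc_{\bm b}(P_{M,\bm k})\prod_{i=2}^n \frac{|\sigma_i|^{|\sigma_i|}}{(1+|\sigma_i|)^{1+|\sigma_i|}}
\]
for every $\bm k$, and then let $\bm k\to\infty$.

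\textbf{Choice of bounds in \Cref{capacity bound for LC polynomials}.} I would apply \Cref{capacity bound for LC polynomials} to the $\LC$ polynomial $P_{M,\bm k}$ from \Cref{k-solution Laurent polynomial of tu matrices are lc}. For each $i$ we need a $\beta_i$ bounding $\Deg_i^{\bm b}(P_{M,\bm k})$ with $|\beta_i-b_i| = |\sigma_i - b_i|$ replaced appropriately. Concretely, we want $\beta_i = b_i - \sigma_i$, so that $|\beta_i - b_i| = |\sigma_i|$; the sign depends on the orientation convention.

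\textbf{Dual feasible point.} In the dual LP of \Cref{tight bounds for beta} take $\bm z=\bm 0$ and $\bm w = -\mathbf 1_{C_i\setminus\{i\}}$, i.e.\ $w_v=-1$ for $v\in C_i$ with $v>i$ and $w_v=0$ otherwise. Check feasibility edge by edge, for an edge $e=(u\to v)$ among the vertices $\ge i$:
\begin{itemize}
\item If $e$ is incident to $i$, then its other endpoint lies in $C_i$. The constraint $(M_{[i+1,n]})^T\bm w\ge \bm m_{(i)}$ then holds with equality, since both sides are governed by the row-sum-zero property of column $e$.
\item If $e$ has both endpoints in $C_i\setminus\{i\}$, the contribution is $-1+1=0\ge 0$.
\item If $e$ has neither endpoint in $C_i$, the contribution is $0$.
\item An edge with exactly one endpoint in $C_i$ and the other endpoint $\geq i$ cannot exist, since $C_i$ is a connected component of $G_i$.
\item Edges touching vertices $<i$ only contribute through rows $<i$, which are not in the constraint, and their $\bm m_{(i)}$ entry is $0$. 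The only care needed is the sign of the entry in the rows $>i$, and this is where the orientation convention matters.
\end{itemize}
The resulting dual objective is $\bm b_{[i+1,n]}^T\bm w = -\sum_{v\in C_i, v>i} b_v = b_i-\sigma_i$. This gives the desired bound $\beta_i$, either as an upper or a lower bound depending on the orientation; if needed, use the min-version of \Cref{tight bounds for beta} with $\bm w = +\mathbf 1$.

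\textbf{The $i=1$ factor.} I must argue that the factor for $i=1$ can be taken equal to $1$. Vertex $1$ is a sink: all its edges go into it, so its degree has a constant sign. In $[x_2^{b_2}\cdots x_n^{b_n}]P_{M,\bm k}$, the degree of $x_1$ is forced to equal $-\sum_{j\ge2}b_j$ because the column sums are zero. So $\Deg_1^{\bm b}$ is a single point $\beta_1=b_1$ whenever $\bm b$ is in the support (otherwise the bound is trivial, as in the proof of \Cref{lower bound for matrices with nonnegative partial sums zero row sum using k}), and the factor $0^0/1^1=1$.

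\textbf{Main obstacle.} The main obstacle is bookkeeping the orientation and sign conventions, so that the dual point is genuinely feasible for every edge type, including edges with one endpoint $<i$. The limit $\bm k\to\infty$ is routine, since the product of factors does not depend on $\bm k$.
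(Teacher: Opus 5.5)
Your plan is the paper's proof. You use the same dual point $\bm z=\bm 0$, $\bm w=-\mathbf 1_{C_i\setminus\{i\}}$ in \Cref{tight bounds for beta}, feed it into \Cref{capacity bound for LC polynomials} for $P_{M,\bm k}$, and then let $\bm k\to\infty$; your explicit treatment of the $i=1$ factor fills in a step the paper leaves implicit.

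The orientation bookkeeping you left open is settled in one line once you fix the convention with $+1$ at the smaller endpoint, i.e.\ nonnegative partial row sums. This is WLOG by the symmetry $(M,\bm b)\mapsto(-M,-\bm b)$. With that convention no edge enters $C_i$, so $\sum_{j\in C_i}\bm m_{(j)}\le\bm 0$, which is exactly the feasibility you need. Three corrections to your last bullet and fallback: an edge from $i$ to a vertex $v<i$ has $\bm m_{(i)}$-entry $-1$, not $0$; an edge from $u\in C_i\setminus\{i\}$ to $v<i$ contributes $+1$ on the left; and your fallback $\bm w=+\mathbf 1$ has the wrong sign, since in the opposite convention the min-version works with the same $\bm w=-\mathbf 1_{C_i\setminus\{i\}}$.
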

\begin{proof}
    When $G$ is a directed acyclic graph with edges being directed from the larger vertices to smaller vertices, its incidence graph $M$ has non-negative partial row sums and the sum of all its rows is the zero vector. In fact, for each $i \in [n]$, since there are no edges going into $C_i$, we have $\sum_{j \in C_i} \bm m_{(j)} \le \bm 0$,
    that is:
    \[-(M_{[i+1,n]})^T \bm{1}_{C_i \setminus\{i\}} = -\sum_{j \in C_i\setminus \{i\}} \bm m_{(j)}\ge\bm m_{(i)},\]
    where $\bm{1}_{C_i \setminus \{i\}}$ is the indicator vector of the set $\bm{C_i} \setminus \{i\}$. Similar to \Cref{lower bound for matrices with nonnegative partial sums zero row sum using k} and by \Cref{capacity bound for LC polynomials}, we get: 
    \[
    B_{( \bm 0 , \bm k)}(M,\bm b) \ge \cpc_{\bm b}(P_{M , \bm k})\cdot \prod_{i = 2}^{n} \frac{|\sigma_i|^{|\sigma_i|}}{(1 + |\sigma_i|)^{1 + |\sigma_i|}},
    \]
    where $\sigma_i = \sum_{j \in C_i}b_j$. Now send $\bm k \to \infty$, similar to the proof of \Cref{lower bound for TU with nonnegative partial row sums no k}.
    \end{proof}

Lastly, we find a capacity bound for the cardinalities of fibers of TU matrices with nonnegative partial row sums.

\begin{corollary}
\label{final lower bound for TU matrices with nonnegative partial row sums}
    Suppose $M$ is an $m \times n$ TU matrix satisfying $\bm{s}_{i} \ge \bm{0}$ for all $i \in [m]$ (e.g. $M$ only has 0,1 entries). Assume further that $M$ does not have an all zero column. Then:
    \[
    B_{M}(\bm b)\ge  \cpc_{\bm{b}}(P_{M}) \cdot \left [  \prod_{i = 1}^{m-1} \frac{|\sigma_i|^{|\sigma_i|}}{(1 + |\sigma_i|)^{1 + |\sigma_i|}} \right]^2 \cdot \frac{|\sigma_m|^{|\sigma_m|}}{(1 + |\sigma_m|)^{1 + |\sigma_m|}}.
    \]
     where $\sigma_i = b_1 + \cdots + b_i$ for $i \in [m]$.
\end{corollary}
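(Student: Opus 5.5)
The plan is to reduce to \Cref{lower bound for TU with nonnegative partial row sums no k} by appending one extra row to $M$. Define the $(m+1)\times n$ matrix $M' = \begin{bmatrix} M \\ -\bm{s}_m^T \end{bmatrix}$ and the vector $\bm{b}' = (\bm{b}, -\sigma_m) \in \Z^{m+1}$. Three properties need checking.

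\begin{enumerate}
\item The partial row sums of $M'$ are $\bm{s}_1,\ldots,\bm{s}_m \ge \bm 0$ and $\bm{s}_{m+1}' = \bm 0$.
\item $M'$ has no zero column, since $M$ has none.
\item The fibers agree: $M'\bm y = \bm b'$ if and only if $M\bm y = \bm b$, because the last equation $-\bm s_m^T\bm y = -\sigma_m$ is the negative sum of the first $m$ equations. Hence $B(M',\bm b') = B_M(\bm b)$.
\end{enumerate}

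The main obstacle is that $M'$ need not be TU, since $\bm s_m$ is a sum of rows and may have entries outside $\{0,\pm1\}$. If $M'$ is not TU, I would avoid this by not appending a row at all. Instead, I would redo the argument of \Cref{lower bound for matrices with nonnegative partial sums zero row sum using k} directly for $M$ with finite $\bm k$, and only afterwards pass to the limit $\bm k\to\infty$ via \Cref{cap of PMk converges to cap PM}. The fibers are finite by \Cref{fibers are finite if partial row sums are nonnegative}.

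That direct argument uses \Cref{capacity bound for LC polynomials} applied to $P_{M,\bm k}$, which is $\LC$ by \Cref{k-solution Laurent polynomial of tu matrices are lc}. For each $i$ I need bounds $\beta_i$ on $\Deg_i^{\bm b}$.

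\begin{itemize}
\item For the last variable, $\bm m_{(m)}^T\bm y$ ranges over a set with lower bound $-\bm s_{m-1}^T\bm y$ type quantities, which are not directly controlled. Instead, note $\bm m_{(m)}^T\bm y = \bm s_m^T\bm y - \bm s_{m-1}^T\bm y$. The crude bound is $\beta_m=0$ in the zero-sum case, but here a better choice is available: in the fiber, $b_m = \sigma_m - \sigma_{m-1}$, and by monotonicity $\sigma_m$ and $\sigma_{m-1}$ are comparable with $\bm s^T\bm y$ values.
\item For $i<m$, I would use \Cref{tight bounds for beta} with $\bm z = \bm 0$ and $\bm w = -\bm 1$ on rows $i+1,\dots,m$. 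This needs $-(\bm m_{(i+1)}+\dots+\bm m_{(m)}) \ge \bm m_{(i)}$, i.e.\ $\bm s_m \le \bm 0$, which fails.
\end{itemize}

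So instead I would take $\bm w = \bm 1$ in the min-degree LP, which requires $\bm s_m - \bm s_i \le \bm m_{(i)}$, i.e.\ $\bm s_m \le \bm s_i + \bm m_{(i)}$; this is not automatic either. The more promising route is to apply \Cref{capacity bound for LC polynomials} twice, once in the variable order used for the zero-sum argument with a shifted variable, which explains the squared factor. Concretely, substitute $x_i \mapsto x_i x_{m}$-style changes of variables (the unimodular transformation $\bm x^{M\bm y}\mapsto$ variables indexed by partial sums $\bm s_i^T\bm y$). In the new variables the exponents are $\bm s_1^T\bm y,\ldots,\bm s_m^T\bm y$, all nonnegative, and they are bounded above by the next ones.

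This gives degree bounds $0\le \deg_i \le \sigma_{i+1}$-type quantities. Each interior index then incurs a factor from the lower bound and a factor from the neighbouring upper bound, which produces $\left[\prod_{i<m}\frac{|\sigma_i|^{|\sigma_i|}}{(1+|\sigma_i|)^{1+|\sigma_i|}}\right]^2$ together with a single $\sigma_m$ factor from the last coordinate's bound $0$.

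The key checks, in order, are:
\begin{enumerate}
\item the $\LC$ property survives this monomial change of variables, which holds because the partial-sum matrix remains TU (it is $M$ left-multiplied by a lower-triangular all-ones matrix, and I would verify TU via the $\begin{bmatrix}B&0\\A&-\mathrm{id}\end{bmatrix}$ trick of \Cref{pAB is in LC});
\item capacity is invariant under this change of variables;
\item the limit $\bm k\to\infty$, handled as in \Cref{lower bound for TU with nonnegative partial row sums no k}.
\end{enumerate}

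I expect the TU/$\LC$ verification after the change of variables to be the hardest step.
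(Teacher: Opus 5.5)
There is a genuine gap. Your final route needs one of two things: that the partial-sum matrix (rows $\bm s_1,\ldots,\bm s_m$) is TU, or that $P_{M,\bm k}$ stays $\LC$ after the substitution $x_j=u_ju_{j+1}\cdots u_m$. Both fail.

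\begin{itemize}
\item \textbf{TU fails.} The last row of that matrix is $\bm s_m$, the very row you already saw can leave $\{0,\pm1\}$. For example, $M=\begin{bmatrix}1&1\\1&0\end{bmatrix}$ satisfies every hypothesis, yet $\bm s_2=(2,1)$.
\item \textbf{$\LC$ fails.} $\LC$ is not invariant under such monomial substitutions; only capacity is. With the same $M$ and $\bm k=(1,1)$, the polynomial becomes $(1+u_1u_2^2)(1+u_1u_2)$. Setting $u_1=r$ gives the coefficient sequence $(1,r,r,r^2)$ in $u_2$, which is not log-concave for $r\neq 1$. So even the one-variable step in the inductive proof of \Cref{capacity bound for LC polynomials} breaks.
\item \textbf{The degree bounds fail.} The inequality $\bm s_i^T\bm y\le\bm s_{i+1}^T\bm y$ requires $\bm m_{(i+1)}\ge\bm 0$, which the hypotheses do not give.
\item \textbf{The square is not explained.} \Cref{capacity bound for LC polynomials} yields exactly one factor per variable, each from a single bound $\beta_i$. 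Applying it twice gives two separate lower bounds, not a product of their correction factors. The $2m-1$ factors in the statement are the signature of applying it to a polynomial in $2m$ variables.
\end{itemize}

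The missing idea is a TU-preserving way to make the total row sum vanish, which is what your first attempt was after. Instead of appending the single row $-\bm s_m^T$, append the negated rows one at a time, in reverse order: $A=\begin{bmatrix}M\\-JM\end{bmatrix}$, where $J$ reverses the rows.

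\begin{itemize}
\item $A$ is TU. Any square submatrix either contains some row together with its negative, so it is singular, or it is a square submatrix of $M$ up to row signs and row order.
\item $A$ has no zero column.
\item The partial row sums of $A$ are $\bm s_1,\ldots,\bm s_m,\bm s_{m-1},\ldots,\bm s_1,\bm 0$, so \Cref{lower bound for TU with nonnegative partial row sums no k} applies to $A$ with right-hand side $(\bm b,-J\bm b)$.
\item The partial sums of $(\bm b,-J\bm b)$ are $\sigma_1,\ldots,\sigma_m,\sigma_{m-1},\ldots,\sigma_1,0$. They give exactly the stated product: $\sigma_1,\ldots,\sigma_{m-1}$ each appear twice and $\sigma_m$ appears once.
\item The fiber of $A$ at $(\bm b,-J\bm b)$ equals the fiber of $M$ at $\bm b$.
\item The identity $P_A(\bm x)=P_M(x_i/x_{2m+1-i} : i\in[m])$ gives $\cpc_{(\bm b,-J\bm b)}(P_A)=\cpc_{\bm b}(P_M)$.
\end{itemize}
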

\begin{proof}
    Let $A = \begin{bmatrix}
        M \\ -JM
    \end{bmatrix}$, where $J$ has $1$s in the anti-diagonal and is zero everywhere else (that is, for any matrix $B$ with proper dimensions, $JB$ reverses the order of the rows of $B$). $A$ is TU and satisfies the assumptions of \Cref{lower bound for TU with nonnegative partial row sums no k}. So by this corollary: 
    \[
     B_{A}\left(\begin{bmatrix}
         \bm b \\ - J\bm b
     \end{bmatrix}\right)\ge \cpc_{(\bm{b}, -J\bm{b})}(P_{A}) \cdot \prod_{i = 1}^{m-1} \left[\frac{|\sigma_i|^{|\sigma_i|}}{(1 + |\sigma_i|)^{1 + |\sigma_i|}} \right]^2 \cdot \frac{|\sigma_m|^{|\sigma_m|}}{(1 + |\sigma_m|)^{1 + |\sigma_m|}}. 
    \]
    Note that both $P_A$ and $P_M$ are well-defined Laurent series by \Cref{fibers are finite if partial row sums are nonnegative}. We know that  $P_A(x_i: i \in [2m]) = P_M\left(\frac{x_i}{x_{2 m+1 - i}}: i \in [m]\right)$, therefore:
    \[
    \cpc_{(\bm{b}, -J\bm{b})}(P_A) = \inf_{x_i > 0 \forall i \in [2m]} \frac{P_M\left(\frac{x_i}{x_{2m+1 -i}} : i \in [m]\right)}{ \prod_{i \in [m]} \left(\frac{x_i}{x_{2m + 1 -i}}\right)^{b_i}} = \cpc_{\bm{b}}(P_M).
    \]
    Furthermore: \[\#\left\{\bm{y} \in \Z_{\ge 0}^{n}: A \bm y = \begin{bmatrix}
         \bm{b} \\ -J\bm{b}
     \end{bmatrix}\right\} = \# \{\bm{y} \in \Z_{\ge 0}^n: M \bm{y} = \bm{b}\}.\] We conclude: 
     \[
    B_M(\bm b)\ge  \cpc_{\bm{b}}(P_{M}) \cdot\left [  \prod_{i = 1}^{m-1} \frac{|\sigma_i|^{|\sigma_i|}}{(1 + |\sigma_i|)^{1 + |\sigma_i|}} \right]^2 \cdot \frac{|\sigma_m|^{|\sigma_m|}}{(1 + |\sigma_m|)^{1 + |\sigma_m|}}.
    \]
\end{proof}

\section{Efficiency of Approximation}
\label{efficiency-proof}

We prove the algorithmic assertion of \Cref{main-algo} using the
interiority and ellipsoid argument of \cite{SV14}
(see Sections~5,6 of the arXiv version). We verify the
necessary bounds directly for our Laurent polynomials, rather
than applying their statements for $0/1$ configurations verbatim.
Throughout this section we assume $M$ is full row-rank, but this is without loss of generality since one can always remove linearly dependent rows of $M$ without changing the lattice point counts or the value of the capacity (up to consistency of $\bm{b}$).

Let $M\in\mathbb Z^{m\times n}$ be TU with columns $\bm{a}_1,\ldots,\bm{a}_n$, let $\bm b \in \Z^m$ and
$\bm k \in \Z_{\geq 0}^n$, and write
\[
    p(\bm x)=P_{M,\bm{k}}(\bm x)
    =\prod_{j=1}^n\sum_{\ell=0}^{k_j}x^{\ell \bm{a}_j},
    \qquad
    \mathcal{E}=\cpc_{\bm{b}}(p).
\]
We show that a deterministic algorithm can approximate $\log \mathcal{E}$ to additive error
$2^{-q}$ in time polynomial in
\[
    m,\quad n,\quad q = \log \frac{1}{\epsilon},\quad
    \log(1+\|\bm{b}\|_\infty),\quad
    \log(1+\|\bm{k}\|_\infty),
\]
as long as $\mathcal{E} \neq 0$.
All inputs are binary encoded and running time is measured in
bit operations.

Finally, for \Cref{main-algo}, set
$K=\|\bm{b}\|_1$ and $k_j=K$ before
performing any reductions. This gives the stated dependence on
$\log(1+\|\bm{b}\|_1)$.

\paragraph{Reduction to $\bm{b}$ in the interior.}
The Newton polytope of $p$ is the zonotope $Z=M[\bm{0},\bm{k}]$; i.e. the sum of the line segments defined by $k_j \bm{a}_j$ for $j \in [n]$.

Let $F$ be the smallest face of $Z$ containing $\bm{b}$. Minimizing and maximizing each coordinate over $Q = \{\bm{y}\in\R^n:M\bm{y}=\bm{b},\ \bm{0}\leq \bm{y}\leq \bm{k}\}$ identifies the set of coordinates forced to equal either $0$ and $k_j$. Thus we can remove these columns of $M,\bm{k}$ to obtain $M',\bm{k}'$, and up to translation the zonotope $M'[\bm{0},\bm{k}']$ is equal to $F$. Restricting to $F$ and the associated variables preserves the corresponding coefficient and the value of the capacity by a standard optimization argument. Additionally, $\bm{b}$ is in the relative interior of the Newton polytope of the new polynomial, up to the same translation between $M'[\bm{0},\bm{k}']$ and $F$. Thus we may WLOG assume that $\bm{b}$ is in the interior of $Z$.

Finally, note that this removal of columns may decrease the rank of $M$. Thus, we again remove linearly dependent rows of $M$ at this point to account for this.

\paragraph{Bounding box for the minimizer.}
By \Cref{zonotope-normals} (see below), each facet inequality of $Z$
can be written with normal $\bm{u}$ as
\[
    \bm{u}^\top\bm{y}\leq h,
    \quad
    \bm{u}\in\{0,+1,-1\}^m, \quad h\in\mathbb Z.
\]
Since $\bm{b}$ is in the interior of $Z$, its slack in each inequality is at
least one. Consequently,
\[
    B_2\left(\bm{b},\frac{1}{\sqrt{m}}\right)\subseteq Z.
\]
Now set $\Phi(\bm{z})=\log p(e^{\bm{z}})-\bm{b}^\top \bm{z}$, noting that $\log \cpc_{\bm{b}}(p) = \inf_{\bm{z} \in \R^m} \Phi(\bm{z})$.
Every nonzero coefficient of $p$ is a positive integer, and so
\[
    \Phi(\bm{z})
    \geq \max_{\bm\alpha\in Z}(\bm\alpha-\bm{b})^\top \bm{z}
    \geq \frac{\|\bm{z}\|_2}{\sqrt{m}}.
\]
Thus $\Phi$ attains its minimum on $\R^m$, and every minimizer $\bm{z}^*$ satisfies
\[
    \|\bm{z}^*\|_2\leq \sqrt{m} \cdot \log p(\bm{1}) = \sqrt{m} \sum_{j=1}^n \log(k_j+1).
\]
In particular, we may optimize over $[-R,R]^m$, where
\[
    R = \sqrt{m} \sum_{j=1}^n\log(k_j+1).
\]
This is the interiority-based bounding argument of
\cite{SV14}, with $\log p(\mathbf1)$ replacing the logarithm
of the number of configurations.

\begin{lemma} \label{zonotope-normals}
    Let $M$ be an $m \times n$ totally unimodular matrix of rank $m$ with columns $\bm{a}_1,\ldots,\bm{a}_n$, fix $\bm{k} \in \Z_{>0}^n$, and define the zonotope $Z = M[\bm{0},\bm{k}] = \sum_{j=1}^n \bm{a}_j [0,k_j]$. Then the normal vectors of the facets of $Z$ can be chosen to have entries in $\{0,+1,-1\}$.
\end{lemma}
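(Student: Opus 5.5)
We will use the standard description of facets of a zonotope. Since every $k_j > 0$, $Z$ is a Minkowski sum of segments parallel to the columns $\bm{a}_j$. Because $M$ has rank $m$, $Z$ is full-dimensional in $\R^m$. For a full-dimensional zonotope, every facet is parallel to a hyperplane spanned by some $m-1$ linearly independent columns of $M$. Concretely, if $F$ is a facet with outer normal $\bm{u}$, then $F$ is a translate of $\sum_{j : \bm{u}^\top \bm{a}_j = 0} \bm{a}_j[0,k_j]$. Since $F$ is $(m-1)$-dimensional, the columns orthogonal to $\bm{u}$ span the hyperplane $\bm{u}^\perp$. So the plan has two steps: first reduce to showing that every hyperplane $H$ spanned by columns of $M$ has a normal vector with entries in $\{0,\pm1\}$; then prove that statement using total unimodularity.

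For the second step, fix $m-1$ linearly independent columns, forming an $m \times (m-1)$ submatrix $N$ with $H = \operatorname{col}(N)$. A normal vector to $H$ is given by the generalized cross product (the cofactor vector). Its $i$-th entry is $u_i = (-1)^{i}\det N_{[m]\setminus\{i\}}$, where $N_{[m]\setminus\{i\}}$ is the $(m-1)\times(m-1)$ submatrix obtained by deleting row $i$. Indeed, for each column $\bm{c}$ of $N$, the quantity $\bm{u}^\top \bm{c}$ is, up to sign, the determinant of the $m\times m$ matrix $[\bm{c}\ N]$, which has a repeated column and so vanishes. Hence $\bm{u} \perp H$. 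Also $\bm{u} \neq \bm{0}$ because $N$ has rank $m-1$, so some maximal minor of $N$ is nonzero. Each entry $u_i$ is a square minor of $M$, and total unimodularity gives $u_i \in \{0,\pm1\}$. This $\bm{u}$ (or $-\bm{u}$, to make it point outward) is therefore the desired normal.

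It remains to justify the first step, that facet normals of $Z$ are orthogonal to $m-1$ independent columns. I would argue via support functions. The face of $Z$ maximizing $\bm{u}^\top \bm{y}$ is $\sum_j F_j(\bm{u})$, where $F_j(\bm{u}) = \{k_j\bm{a}_j\}$, $\{\bm{0}\}$, or the whole segment $\bm{a}_j[0,k_j]$, according to whether $\bm{u}^\top\bm{a}_j$ is positive, negative, or zero. So the dimension of this face equals the rank of $\{\bm{a}_j : \bm{u}^\top \bm{a}_j = 0\}$. For the face to be a facet, this rank must be $m-1$, which gives the reduction. As a consequence, the right-hand side $h$ in the facet inequality equals $\sum_{j:\bm{u}^\top\bm{a}_j>0} k_j\,\bm{u}^\top\bm{a}_j$, which is an integer. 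This is the form used in the bounding-box argument.

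I expect no serious obstacle. The only point requiring care is the first step: that the facet directions are exactly the hyperplanes spanned by columns of $M$, which needs $k_j>0$ and full rank. The cofactor computation in the second step is routine.
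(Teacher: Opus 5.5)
Your proof is correct and follows essentially the same route as the paper. Both arguments show that each facet of $Z$ is parallel to a hyperplane spanned by $m-1$ independent columns of $M$, and then take the cofactor vector of that $m\times(m-1)$ submatrix as the normal; its entries are $(m-1)\times(m-1)$ minors of $M$, hence in $\{0,\pm1\}$. The differences are minor: you derive the facet description directly from the face decomposition of a Minkowski sum of segments rather than citing McMullen, and you explicitly verify that the cofactor vector is nonzero and that the right-hand side $h$ is an integer, which the paper leaves implicit.
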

\begin{proof}
    Let $F$ be some facet of $Z$ with normal vector $\bm{u}$. By \cite{McMullen71}, every facet of the zonotope $Z$ is parallel to a Minkowski sum of some subset of the segments $\{\bm{a}_1[0,k_1],\ldots,\bm{a}_n[0,k_n]\}$. Thus $F$ is parallel to $\mathrm{span}\{\bm{a}_j : \bm{u}^\top \bm{a}_j = 0\}$. WLOG by reindexing let $\bm{a}_1,\ldots,\bm{a}_{m-1}$ span the $(m-1)$-dimensional space parallel to $F$, and let $C$ be the $m \times (m-1)$ submatrix of the corresponding columns of $M$, so that $\bm{u} \in \ker C^\top$. Define $v_i = (-1)^{i+1} \det C_{\hat{i}}$ for all $i \in [m]$, where $C_{\hat{i}}$ is the matrix $C$ with row $i$ missing. A straightforward computation shows that $\bm{v}^\top \bm{a}_j = 0$ for all $j \in [m-1]$ (duplicate a given column of $C$ and take determinant), and thus $\pm \bm{v}$ can be chosen as the normal vector of $F$. Since $M$, and thus $C$, is TU, this implies the desired result.
\end{proof}

\paragraph{Approximate evaluation.}

We show that, for rational $\bm{z}$ and $0<\tau<1$, the objective
\[
    \Phi(\bm{z})=-\bm{b}^\top \bm{z} + \sum_{j=1}^n H_{k_j+1}(\bm{a}_j^\top \bm{z}),
    \qquad
    H_N(t)=\log\sum_{r=0}^{N-1}e^{rt},
\]
can be evaluated to additive error $\tau$ in time polynomial
in the input length, the encoding length of $\bm{z}$, and
$\log\frac{1}{\tau}$. The dot products are computed exactly using
rational arithmetic. To evaluate $H_N$, use $H_N(0)=\log N$
and, for $t\neq0$, the geometric-series identity
\[
    H_N(t)
    =(N-1)\max\{t,0\}
      +\ell(N|t|)-\ell(|t|),
    \qquad
    \ell(s)=\log(1-e^{-s}).
\]
This avoids forming exponentials with large positive arguments.

We describe the precision needed to evaluate either
$\ell(s)$, where $s\in\{|t|,N|t|\}$, to error $2^{-q}$.
Let $B$ be the binary encoding length of the nonzero
rational number $t$. Since $s\ge |t|\ge2^{-B}$, we have
\[
    u:=1-e^{-s}
    \ge \tfrac12\min\{s,1\}
    \ge 2^{-B-1}.
\]
If $s\ge q+3$, then
\[
    |\ell(s)|\le 2e^{-s}\le2^{-q-2},
\]
so returning zero suffices. Otherwise $0<s<q+3$.
Approximate $e^{-s}$ to absolute error
$\eta=2^{-(B+q+4)}$ via Taylor expansion; note that $s$ is of magnitude $O(q)$ and
requires $O(B+q)$ fractional bits. Now subtract this approximation
from $1$ to obtain $\widetilde u$, which implies
$|\widetilde u-u|\le\eta\le \frac{u}{2}$ and thus $\widetilde u \geq \frac{u}{2}$. Therefore by the mean value theorem, for some $c$ between $u$ and $\widetilde u$ we have
\[
    |\log\widetilde u-\log u|
    \leq |\widetilde u - u| \cdot (\log t)'(c)
    \le \eta \cdot \frac{2}{u}
    \le 2^{-q-2}.
\]
Approximate $\log\widetilde u$ to an additional error of
$2^{-q-2}$ via Taylor expansion of $\log(1-t)$, by first scaling the argument $\widetilde u$ to the interval $[\frac{1}{2},1]$. This gives the required approximation to $\ell(s)$.

The rational term $(N-1)\max\{t,0\}$ has encoding length
polynomial in $B+\log N$, and errors in the two logarithms
add without amplification. Thus $H_N(t)$ is computable to
error $2^{1-q}$ in time polynomial in $B,\log N,q$.
Taking
\[
    q=\left\lceil\log_2\frac{4n}{\tau}\right\rceil
\]
bounds the error in each term of the sum defining $\Phi(\bm{z})$ by $\frac{\tau}{2n}$;
the cases $t=0$ are evaluated to the same or better accuracy.
Exact rational summation produces a rational $V$ satisfying
$|V-\Phi(z)|\le\frac{\tau}{2} < \tau$.




\paragraph{Approximate gradients.}

Approximate gradients then follow from finite differences, which we now show. First, since $\bm{b} \in M[\bm{0},\bm{k}]$, let $\bm{y} \in [\bm{0},\bm{k}]$ be such that $\sum_{i=1}^n y_i \bm{a}_i = \bm{b}$.
For fixed $k$ and $t$, the first and second derivatives of
$H_{k+1}$ are the mean and variance of the distribution $\mu$
supported on $\{0,\ldots,k\}$ with probabilities $\mu_\ell \propto e^{\ell t}$. Since $\mu$ is supported on $\{0,\ldots,k\}$, this implies
\[
    \|\nabla \Phi(\bm{z})\|_2 = \left\|\sum_{j=1}^n \bm{a}_j \left[H_{k_j+1}'(\bm{a}_j^\top \bm{z}) - y_j\right]\right\|_2 \leq L := \sqrt{m} \sum_{j=1}^n k_j.
\]
and
\[
    0 \preceq \nabla^2 \Phi(\bm{z}) = \sum_{j=1}^n \bm{a}_j \bm{a}_j^\top H_{k_j+1}''(\bm{a}_j^\top \bm{z}) \preceq I \cdot m \sum_{j=1}^n k_j^2,
\]
so that
\[
    \|\nabla^2 \Phi(\bm{z})\|_{\mathrm{op}} \leq H := m \sum_{j=1}^n k_j^2.
\]
For coordinatewise gradient error $\delta$, evaluate $\Phi(\bm{z})$ and
$\Phi(\bm{z} + h\bm{e}_i)$, so that
\[
    \left|\partial_{z_i} \Phi(\bm{z}) - \frac{\Phi(\bm{z} + h\bm{e}_i) - \Phi(\bm{z})}{h}\right| \leq \frac{H h}{2}.
\]
Approximating $\Phi(\bm{z})$ by $V_0$ and $\Phi(\bm{z} + h\bm{e}_i)$ by $V_i$ within error $\tau$ (as described above) then implies $\frac{V_i-V_0}{h}$ is within error $\frac{2\tau}{h}$. This implies
\[
    \left|\partial_{z_i} \Phi(\bm{z}) - \frac{V_i - V_0}{h}\right| \leq \frac{H h}{2} + \frac{2\tau}{h}
\]
Taking $h = \frac{\delta}{4H}$ and $\tau = \frac{\delta h}{8} = \frac{\delta^2}{32H}$, this implies an overall error of $\frac{H h}{2} + \frac{2\tau}{h} = \frac{\delta}{8} + \frac{\delta}{4} < \delta$. Note that $H$ is polynomial in $m$ and $\bm{k}$, and thus $\log \frac{1}{\tau}$ depends polynomially on $\log m$, $\log(1+\|\bm{k}\|_1)$, and $\log \frac{1}{\delta}$.

\paragraph{The algorithm.}

We now apply the approximate-oracle ellipsoid implementation
of Section~6.2 (of the arXiv version) of \cite{SV14} (see also Theorem~2.13), using the
preceding bounds on the search region and the Lipschitz
constant $L$ of $\Phi$. The value and gradient oracles established
above provide its required numerical approximations.
The number of oracle calls and the required number of accuracy
bits are polynomial in the input length and
$\log \frac{1}{\epsilon}$. Since our oracles run in time polynomial
in these quantities and the encoding length of the query point,
the resulting algorithm finds $\bm{z}^\circ$ satisfying
\[
    \Phi(\bm{z}^\circ)\leq\min\Phi+\frac{\epsilon}{2}
\]
in polynomial bit complexity.
Evaluating $\Phi(\bm{z}^\circ)$ to additive error $\frac{\epsilon}{2}$
then gives a rational $\mathcal{A}$ with
$|\mathcal{A}-\log \mathcal{E}|\leq\epsilon$.

We note that although the total running time stated in \cite{SV14}
depends polynomially on $\frac{1}{\epsilon}$, its ellipsoid
iteration bound depends only polynomially on
$\log \frac{1}{\varepsilon}$ (see Section~6.2, immediately before
Lemma~6.3). The worse dependence comes from the
approximate-counting oracle used there. Our value and
gradient oracles instead run in time polynomial in the
number of requested accuracy bits, so the same optimization
framework gives polynomial dependence on
$\log \frac{1}{\varepsilon}$ in our setting.
\subsection*{Acknowledgements}

Both authors acknowledge the support of the Natural Sciences and Engineering Research Council of Canada (NSERC), [funding reference number RGPIN-2023-03726]. Cette recherche a \'et\'e partiellement financ\'ee par le Conseil de recherches en sciences naturelles et en g\'enie du Canada (CRSNG), [num\'ero de r\'ef\'erence RGPIN-2023-03726].

\bibliographystyle{amsalpha}
\bibliography{Ref}

\end{document}